\newtheorem{theorem}{Theorem}
\theoremstyle{plain}
\newtheorem{definition}[theorem]{Definition}
\newtheorem{lemma}[theorem]{Lemma}
\newtheorem{proposition}[theorem]{Proposition}
\newtheorem{remark}[theorem]{Remark}
\numberwithin{equation}{section}
\def\eps{\varepsilon}
\newcommand{\R}{\mathbb{R}}
\newcommand{\N}{\mathbb{N}}
\renewcommand{\phi}{\varphi}
\newcommand{\dist}{\text{\rm dist}}
\newcommand{\supp}{\text{\rm supp}}
\newcommand{\esssup}{\text{\rm ess\,sup}}
\begin{document}

\allowdisplaybreaks

\title[Distributional solutions of the nonlinear Schr\"odinger equation]{Distributional solutions
of the stationary nonlinear Schr\"odinger equation: singularities, regularity and
exponential decay}

\author{Rainer Mandel}
\address{R. Mandel \hfill\break 
Institut f\"ur Analysis, Karlsruhe Institute of Technology (KIT)\hfill\break
D-76128 Karlsruhe, Germany}
\email{Rainer.Mandel@kit.edu}

\author{Wolfgang Reichel}
\address{W. Reichel \hfill\break 
Institut f\"ur Analysis, Karlsruhe Institute of Technology (KIT), \hfill\break
D-76128 Karlsruhe, Germany}
\email{wolfgang.reichel@kit.edu}

\date{\today}

\subjclass[2000]{Primary: 35Q55, 35J20; Secondary: 35J08, 35J10}
\keywords{nonlinear Schr\"odinger equation, singular solutions, variational methods, distributional
solutions}

\begin{abstract}  
  We consider the nonlinear Schr\"{o}dinger equation $-\Delta u + V(x) u = \Gamma(x)
  |u|^{p-1}u$ in $\R^n$ where the spectrum of $-\Delta+V(x)$ is positive. 
   In the case  $n\geq 3$ we use variational methods to prove
  that for all $p\in (\frac{n}{n-2},\frac{n}{n-2}+\eps)$ there exist
  distributional solutions with a point singularity at the origin provided $\eps>0$ is sufficiently
  small and $V,\Gamma$ are bounded on $\R^n\setminus B_1(0)$ and satisfy suitable H\"{o}lder-type
  conditions at the origin. In the case $n=1,2$ or $n\geq
  3,1<p<\frac{n}{n-2}$, however, we show that every distributional solution of the more general
  equation $-\Delta u + V(x) u = g(x,u)$ is a bounded strong solution if
  $V$ is bounded and $g$ satisfies certain growth conditions.
\end{abstract}

\maketitle


\section{Introduction and main result}

In this paper we investigate distributional solutions of the stationary nonlinear
Schr\"{o}dinger equation (NLS)
\begin{align} \label{Gl I}
  -\Delta u + V(x) u = \Gamma(x) |u|^{p-1}u\quad\mbox{in }\R^n
\end{align}
for $n\in\N$ and $1< p < \frac{n+2}{(n-2)_+}$. The NLS \eqref{Gl I} has
been receiving much attention due to its applicability in different fields of mathematical physics,
e.g. nonlinear optics, mean field theory, Bose-Einstein condensates. Spatially localized
soliton-like solutions $u\in H^1(\R^n)$ of \eqref{Gl I} can be expected whenever $0$ does not
belong to the spectrum of $-\Delta + V(x)$. Ever since pioneering work of Strauss~\cite{strauss},
Berestycki-Lions~\cite{berlio,berlio2}, Stuart~\cite{stuart} a lot of results on existence and
non-existence of ground states/bound states, multiplicity, asymptotic behaviour, bifurcation
phenomena etc. have been obtained. In the case where $V, \Gamma$ are positive constants the results
of Gidas, Ni, Nirenberg~\cite{GNN} and Li~\cite{li} apply and show that all positive
solutions decaying to $0$ at infinity must be radially symmetric. Recently, due to new
developements in photonic crystals, the case of periodic coefficients $V,\Gamma$ has been studied,
cf. Pankov~\cite{Pankov} and Szulkin-Weth~\cite{Szulkin_Weth}. In all of these works the solutions were weak (or classical)
solutions belonging to $H^1(\R^n)$.

\smallskip

More recently, distributional solutions of nonlinear elliptic boundary value problems like
\eqref{Gl I} have been studied. In the context of bounded domains various classes of {\em very weak
solutions}, i.e. subclasses of distributional solutions with prescribed Dirichlet boundary data,
have been investigated, cf. Stampacchia~\cite{Stam}, Br\'{e}zis et al.~\cite{BCMR},
Quittner-Souplet~\cite{QS}, McKenna-Reichel~\cite{mcr}, McKenna et al.~\cite{horreimck}, del Pino
et al. \cite{dPMP}. In the context of the Yamabe problem, Pacard~\cite{Pac, Pac2} and
Mazzeo-Pacard~\cite{mazpac} have also studied distributional solutions of nonlinear boundary value
problems similar to \eqref{Gl I}. In many of the above mentioned results the following phenomenon
occurs: for a range of exponents $1<p<p^\ast$ all very weak solutions turn out to have no
singularities and are indeed bounded weak/classical
solutions of the nonlinear elliptic problem, whereas for
$p^\ast<p<p^\ast+\eps$ unbounded very weak solutions were shown to exist.

\smallskip

In the present paper we show a similar phenomenon for the NLS \eqref{Gl I}. The singular
distributional solutions that we find have some properties in common with $H^1(\R^n)$-solutions of
\eqref{Gl I}, e.g. they decay exponentially fast at infinity. On the other hand, even in cases
where there are no non-trivial $H^1(\R^n)$ solutions, singular distributional solutions can be
shown to exist, cf. Remark \ref{rem I}. Let us point out two further interesting aspects of
singular distributional solutions of \eqref{Gl I}: First, if $V, \Gamma$ satisfy the conditions
given below and are radially symmetric such that $\Gamma$ is positive and radially decreasing and
$V$ is positive and radially increasing then by Li's result, cf.
\cite{li}, all weak/classical non-negative solutions which decay to $0$ at infinity must be
radially symmetric. 
However, using Theorem~\ref{Thm unbounded solution} one can construct a distributional
solution which is not radially symmetric having a single point singularity at the origin although
$V,\Gamma$ are radially symmetric with respect to some point $x_0\in\R^n\setminus\{0\}$. 
Second,
let us view singular distributional solutions from the point of view of numerical approximations. 
From the outcome of one numerical calculation of an approximate solution to \eqref{Gl I} it is impossible to tell if the computed
result approximates a singular disitributional solution or a very large weak/classical solution.
Mesh refinements may help to clarify it. However, from our Theorem~\ref{Thm regularity} it is clear
that below the exponent $p^\ast=\frac{n}{n-2}$ (which is smaller than the usual critical exponent
$\frac{n+2}{n-2}$) no such singular distributional solutions can exist.

\smallskip

Our tools range from linear Schr\"odinger theory, calculus of variations, Green's functions to the
use of singular integral estimates. Results concerning exponential decay of eigenfunctions are
proved by an adapted version of Agmon's method (cf.
\cite{hislopsigal},\cite{hundertmark},\cite{hunzikersigal}).

\medskip

In our first result Theorem~\ref{Thm unbounded solution} we follow the ideas of
\cite{horreimck}, \cite{mazpac} to prove the existence of an
unbounded exponentially decaying distributional solution of \eqref{Gl I} when $n\geq 3$ and 
$\frac{n}{n-2}<p<\frac{n}{n-2}+\eps$ for $\eps>0$ sufficiently small.
We concentrate on the construction of distributional solutions with one point singularity at the
origin. To this end we assume the following conditions on $V,\Gamma:\R^n\to\R$:
\begin{enumerate}
  \item[(H1)] $V\in L^\infty\big(\R^n\setminus B_1(0)\big)$ and
  there are constants $C_1>0$ and $\alpha \geq \frac{n-6}{2}$ such that 
  $$
    |V(x)| \leq C_1 |x|^\alpha \quad\mbox{ for almost all }  x\in B_1(0).
  $$
  \item[(H2)] $\Sigma:= \min\sigma(-\Delta +V(x))>0$ where $\sigma$ denotes the
  $L^2$-spectrum.
  \item[(H3)] $\Gamma\in L^\infty(\R^n)$ and there are constants $C_2>0$ and
  $\beta>\frac{n-2}{2}$ such that 
  $$
      |\Gamma(x)-\Gamma(0)|
      \leq C_2 |x|^\beta \quad\mbox{ for almost all } x\in B_1(0),
  $$
  where $\Gamma(0)>0$.  Rescaling \eqref{Gl I} we can assume w.l.o.g.
  $\Gamma(0)=1$.
\end{enumerate}

\medskip 

In our second result Theorem~\ref{Thm regularity} we show that for $1<p<\frac{n}{(n-2)_+}$
the equation
\begin{align}\label{Gl II}
  -\Delta u + V(x) u = g(x,u) \quad\mbox{in }\R^n
\end{align}
and in particular \eqref{Gl I} does not admit positive locally
unbounded distributional solutions provided $g:\R^n\times\R\to \R$ is a Carath\'{e}odory function
which satisfies
\begin{align} \label{Gl growth cond I} 
  -C_3+C_4 s^p \leq g(x,s)\leq C_3+C_5s^p\qquad (x\in\R^n,s\geq 0).      
\end{align}
where $C_3,C_4,C_5>0$. We also obtain a global boundedness and a global regularity result in the
case $g$ satisfies
\begin{align}
  |g(x,s)| \leq C_6\,(|s|+|s|^p) \qquad (x\in\R^n, s\in\R), \label{Gl growth cond II}      
\end{align}
where $C_6>0$. In addition we find that distributional solutions of \eqref{Gl II} decay
exponentially in the case
\begin{align}
  \lim_{s\to 0} \underset{x\in\R^n}{\esssup} \,\frac{|g(x,s)|}{|s|} = 0. \label{Gl growth cond III}
\end{align}
If remains open if or if not unbounded distributional solutions exists in the borderline case $p=\frac{n}{n-2}$.

\medskip

All our results are built on the following notion of a distributional solution.
\begin{definition} \label{Def very weak solution}  
  Let $g:\R^n\times \R\to \R$ be a
  Carath\'{e}odory function with $|g(x,s)|\leq C(1+|s|^p)$ for all $s\in \R$, almost all $x\in\R^n$ and some $C>0$, $1<p<\infty$. A function $u\in L^p_{loc}(\R^n)$ with $Vu\in L^1_{loc}(\R^n)$ is called a distributional solution of \eqref{Gl II} if $$
    \int_{\R^n} u(-\Delta\phi+V(x) \phi)\,dx = \int_{\R^n} g(x,u)\phi \,dx \quad \mbox{ for all }
    \phi\in C_c^\infty(\R^n). 
  $$
\end{definition}
In contrast, a function  $u\in
L^p_{loc}(\R^n)$ with $\nabla u, Vu \in L^1_{loc}(\R^n)$ is called a {\em weak solution} of \eqref{Gl II} if
\begin{equation}
 \int_{\R^n} \nabla u \nabla \phi+V(x) u\phi\,dx = \int_{\Omega} g(x,u)\phi \,dx \quad \mbox{ for all }
    \phi\in C_c^\infty(\R^n).
\label{def_weak}
\end{equation}
Similarly, we say that $u$ is a {\em weak solution of \eqref{Gl II} on an open
subset $\Omega\subset\R^n$} if \eqref{def_weak} holds for all $\phi\in C_c^\infty(\Omega)$.
A function $u\in L^p_{loc}(\R^n)$ with $-\Delta u, Vu \in L^1_{loc}(\R^n)$ will be called a
{\em strong solution} of \eqref{Gl II} if $-\Delta u + Vu=g(x,u)$ holds almost everywhere in $\R^n$.


\medskip

Our main results are the following two theorems.

\begin{theorem}[Supercritical case] \label{Thm unbounded solution}
  Let the assumptions (H1),(H2),(H3) hold and let $n\geq 3$. Then there
  exists $\eps>0$ such that for all $p \in (\frac{n}{n-2},\frac{n}{n-2}+\eps)$ there is a
  distributional solution $U$ of \eqref{Gl I} with the following properties:
  \begin{itemize}
    \item[(i)] $\esssup_{B_\delta(0)} U=+\infty$ for all $\delta>0$ and $U\in
    L^q(\R^n)$ for all $1\leq q<\frac{n(p-1)}{2}$. 
    \item[(ii)] For all $\delta>0$ the function $U\in H^1(\R^n\setminus B_\delta)$ is a weak
    solution of \eqref{Gl I} on $\R^n\setminus B_\delta$.
    \item[(iii)] For all $\mu\in (0,\sqrt{\Sigma})$ there is $C_\mu>0$ such that $|U(x)|\leq C_\mu
    e^{-\mu|x|}$ if  $|x|\geq 1$.
    \item[(iv)] If in addition $\Gamma\geq 0$ then $U$ can be chosen to satisfy $U\geq 0$. 
  \end{itemize}  
\end{theorem}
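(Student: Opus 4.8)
The plan is to look for $U$ in the form $U=W+v$, where $W$ is an explicit singular model function carrying the Emden--Fowler singularity forced by the nonlinearity, and $v\in H^1(\R^n)$ is a small correction obtained variationally. Put $\gamma:=\frac{2}{p-1}$, so that $\gamma p=\gamma+2$ and, since $\frac{n}{n-2}<p<\frac{n}{n-2}+\eps$, one has $\gamma<n-2$ with $n-2-\gamma\to0$ as $\eps\to0$; in particular $\gamma+2<n$, which is exactly what makes $|x|^{-\gamma}\in L^p_{loc}(\R^n)$ and $|x|^{-\gamma-2}\in L^1_{loc}(\R^n)$. Fix $\chi\in C_c^\infty(B_1(0))$ with $\chi\equiv1$ on $B_{1/2}(0)$, $0\le\chi\le1$, set $\tau:=\bigl(\gamma(n-2-\gamma)\bigr)^{1/(p-1)}$ and $W:=\tau\,\chi(x)\,|x|^{-\gamma}$. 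Since $-\Delta(|x|^{-\gamma})=\gamma(n-2-\gamma)|x|^{-\gamma-2}$ classically and distributionally for $0<\gamma<n-2$, and $\tau\gamma(n-2-\gamma)=\tau^p$, the function $E:=-\Delta W+VW-\Gamma|W|^{p-1}W$ is supported in $\overline{B_1(0)}$, bounded on $B_1(0)\setminus B_{1/2}(0)$, and equals $(1-\Gamma(x))\tau^p|x|^{-\gamma-2}+V(x)\tau|x|^{-\gamma}$ on $B_{1/2}(0)$. Hence, by (H1) and (H3), $|E|\lesssim\tau^p|x|^{\beta-\gamma-2}+\tau|x|^{\alpha-\gamma}$ near $0$, and the elementary inequalities $\gamma-\frac{n-2}{2}<\beta$ and $\gamma-\frac{n+2}{2}<\alpha$ (both consequences of $\gamma<n-2$ together with $\beta>\frac{n-2}{2}$, $\alpha\ge\frac{n-6}{2}$) give $E\in L^{2n/(n+2)}(\R^n)\subset H^{-1}(\R^n)$ with $\|E\|_{H^{-1}}\le C\tau$. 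This is the step that pins down the H\"older exponents in (H1), (H3).

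For $U=W+v$ to be a distributional solution of \eqref{Gl I} in the sense of Definition~\ref{Def very weak solution} it is necessary and sufficient that $v\in H^1(\R^n)$ satisfy $-\Delta v+Vv=\Gamma\bigl(|W+v|^{p-1}(W+v)-|W|^{p-1}W\bigr)-E$ in $\R^n$, using that $W\in L^p_{loc}$, $VW\in L^1_{loc}$ and the distributional identity defining $E$. I will obtain such a $v$ by minimising, over a suitably small closed ball $\overline{B}_R\subset H^1(\R^n)$, the functional
$$J(v):=\tfrac12\int_{\R^n}\bigl(|\nabla v|^2+Vv^2\bigr)\,dx-\int_{\R^n}\Gamma\Bigl(\tfrac1{p+1}|W+v|^{p+1}-\tfrac1{p+1}|W|^{p+1}-|W|^{p-1}Wv\Bigr)dx+\int_{\R^n}Ev\,dx.$$
Although $|W|^{p+1}\notin L^1_{loc}$, the bracketed \emph{combination} is $O\bigl(|W|^{p-1}v^2+|v|^{p+1}\bigr)$; since $|W|^{p-1}=\gamma(n-2-\gamma)\,\chi^{p-1}|x|^{-2}$, Hardy's inequality controls the first part and $H^1(\R^n)\hookrightarrow L^{p+1}(\R^n)$ (valid as $p+1<2^*$) the second, so $J$ is well defined and of class $C^1$, its critical points solving the equation for $v$. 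By (H2) the quadratic form $Q(v):=\int|\nabla v|^2+Vv^2$ is equivalent to $\|v\|_{H^1}^2$; because the coefficient $\gamma(n-2-\gamma)$ in front of the Hardy term tends to $0$ as $\eps\to0$, for $\eps$ small one gets $J''(0)[v,v]=Q(v)-p\int\Gamma|W|^{p-1}v^2\ge\frac14\|v\|_{H^1}^2$. Then $J$ is weakly lower semicontinuous on $\overline{B}_R$ -- the critical Hardy term is weakly continuous there thanks to the smallness of $\gamma(n-2-\gamma)$ and local Sobolev compactness, while a Brezis--Lieb splitting together with the smallness of $R$ absorbs the possible loss of compactness at infinity -- and since $\|E\|_{H^{-1}}=O(\tau)$ is small, a radius $R$ with $\|E\|_{H^{-1}}\ll R\ll1$ yields $\inf_{\overline{B}_R}J<0<\inf_{\partial B_R}J$, so a minimiser $\bar v$ is attained in the interior of $B_R$ and $J'(\bar v)=0$. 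Thus $U:=W+\bar v$ is the desired distributional solution.

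For the properties: (ii) holds because $W\in C_c^\infty(\R^n\setminus B_\delta)$, so $U\in H^1(\R^n\setminus B_\delta)$, and on the open set $\R^n\setminus B_\delta$, where $U\in H^1_{loc}$, the distributional equation may be integrated by parts against $\phi\in C_c^\infty(\R^n\setminus B_\delta)$. For (i): on $B_\delta(0)$ with $\delta\le\frac12$ one has $U=\tau|x|^{-\gamma}+\bar v$, and if $\esssup_{B_\delta}U=M<\infty$ then $\bar v\le M-\tau|x|^{-\gamma}\le-\frac12\tau|x|^{-\gamma}$ on a small ball about $0$, contradicting $\bar v\in L^{2^*}(\R^n)$ since $\gamma\,2^*>n$ (as $\gamma>\frac{n-2}{2}$ for $\eps$ small); the integrability $U\in L^q(\R^n)$ for $q<\frac{n(p-1)}{2}=\frac n\gamma$ follows from $|x|^{-\gamma}\in L^q(B_1)\Leftrightarrow q<n/\gamma$, from $\bar v\in H^1(\R^n)$, and from the decay in (iii). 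For (iii): outside $B_1(0)$ one has $W\equiv0$ and $E\equiv0$, so $U=\bar v$ solves $-\Delta U+VU=\Gamma|U|^{p-1}U$ there; a Brezis--Kato/Moser iteration (permissible since $p$ is subcritical) gives $U\in L^\infty(\R^n\setminus B_1)$ and $U(x)\to0$ as $|x|\to\infty$, so the effective potential $V-\Gamma|U|^{p-1}$ satisfies $\liminf_{|x|\to\infty}\ge\Sigma$ and the Agmon-type decay lemma gives $|U(x)|\le C_\mu e^{-\mu|x|}$ for every $\mu<\sqrt\Sigma$. For (iv): if $\Gamma\ge0$, repeat the minimisation with $|W+v|^{p-1}(W+v)$ replaced by $(W+v)_+^{\,p}$; testing the resulting equation for $U$ with $U_-\in H^1(\R^n)$ and using positivity of $-\Delta+V$ forces $U_-\equiv0$, whence $U\ge0$ and $U$ solves \eqref{Gl I}.

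I expect the main obstacle to be the variational step: the model potential $|W|^{p-1}$ behaves exactly like $|x|^{-2}$, i.e. sits at the borderline of Hardy's inequality, so $J$ is coercive and weakly lower semicontinuous only to the extent that $\gamma(n-2-\gamma)$ -- hence $\eps$ -- is small, and establishing the $C^1$ structure together with the absence of concentration at the origin and at infinity requires care (Hardy's inequality, the Brezis--Lieb lemma, local Sobolev compactness). The accompanying bookkeeping of the weighted integrability of $E$ and of $|W|^{p-1}v^2$, which is precisely what forces the hypotheses $\alpha\ge\frac{n-6}{2}$ and $\beta>\frac{n-2}{2}$, is the other point where the argument is genuinely delicate rather than routine.
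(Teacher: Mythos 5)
Your overall architecture is the paper's own: a singular Emden--Fowler profile whose amplitude $\tau=c_{n,p}$ tends to $0$ as $p\searrow\frac{n}{n-2}$, plus an $H^1$ correction found by a local variational argument on a small ball, with Hardy's inequality and the smallness of $\gamma(n-2-\gamma)$ (resp.\ $c_{n,p}$) doing the work; the only structural difference is that you cut the profile off inside $B_1$, so the error $E$ is compactly supported, whereas the paper glues it to an exact exponentially decaying solution of $-\Delta u+u=u^p$ outside $B_\rho$ (Proposition~\ref{Prop Existence approximate solution}), which is why its functional carries a surface term on $\partial B_\rho$. That modification is harmless. But several of your justifications would fail as written. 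First, the existence step: the Hardy term $\int\chi^{p-1}|x|^{-2}v^2\,dx$ is \emph{never} weakly continuous on $H^1$, however small its coefficient; smallness only lets you absorb the concentration defect (at $0$ and at infinity) into the coercive quadratic form, and that absorption -- a Brezis--Lieb type splitting of both the Hardy and the $L^{p+1}$ contributions, with cross terms handled through $N'(v)\in H^{-1}$ -- still has to be carried out. The paper sidesteps lower semicontinuity altogether: Ekeland's principle on the closed ball produces a Palais--Smale sequence, and one passes to the limit in $J'$ tested against $C_c^\infty$ functions using a.e.\ convergence and a domination argument (Proposition~\ref{Prop existence critical point}). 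Second, your bound $\|E\|_{H^{-1}}\le C\tau$ via $L^{2n/(n+2)}$ is too lossy at the admissible borderline $\alpha=\frac{n-6}{2}$: for $n=3$ and $V(x)\sim|x|^{-3/2}$ the $L^{6/5}(B_{1/2})$-norm of $|x|^{\alpha-\gamma}$ diverges like $(n-2-\gamma)^{-5/6}$ while $\tau\sim(n-2-\gamma)^{1/2}$, so the uniform smallness you use to place the minimiser in the interior of $B_R$ is not available from this estimate; the paper instead pairs $Vu_0u$ with $|u|/|x|$ via Hardy and tracks $c_{n,p}D(p)$ with $D(p)$ as in \eqref{def_dp}, which is exactly the bookkeeping your step needs.

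Third, for the decay (iii) you invoke $\liminf_{|x|\to\infty}\bigl(V-\Gamma|U|^{p-1}\bigr)\ge\Sigma$; but (H2) is a spectral hypothesis and gives no pointwise lower bound on $V$ (it allows, e.g., potentials dipping below $\Sigma$ arbitrarily far out). What is true, and what the paper uses, is that $\Gamma|U|^{p-1}1_{\R^n\setminus B_2}$ tends to zero at infinity (Kato class), so the essential spectrum of $-\Delta+V-\Gamma|U|^{p-1}$ equals that of $-\Delta+V\subset[\Sigma,\infty)$, and Persson's theorem then feeds the Agmon argument of Proposition~\ref{Prop exponentialdecay}. Fourth, the positivity step (iv) is a genuine gap: $U$ is not in $H^1_{loc}$ near the origin, so ``testing the equation for $U$ with $U_-$'' is not an admissible operation (it is not even clear that $\nabla W\cdot\nabla U_-$ is integrable near $0$, since on $\{U<0\}$ one only knows $W\le|\bar v|$, not that this set stays away from the singularity). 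The paper proves $\hat U\ge0$ by a duality argument instead: for every $0\le\psi\in C_c^\infty(\R^n)$ it takes the nonnegative solution $w$ of $-\Delta w+Vw=\psi$, upgrades $w$ to $W^{2,s}_{loc}$, and tests the distributional equation with smooth approximations of $w$ (Section~\ref{pos_sec}). With these repairs -- Ekeland (or a genuinely proved absorption lemma) for existence, the Hardy pairing for $E$, the essential-spectrum argument for decay, and the duality argument for positivity -- your outline coincides with the published proof.
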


\begin{theorem}[Subcritical case] \label{Thm regularity}   
  Let $n\in\N$, $1< p <\frac{n}{(n-2)_+}$, $V\in
  L^\infty(\R^n)$, let $g:\R^n\times \R\to \R$ be a Carath\'{e}odory function and let $u$ be a
  distributional solution of \eqref{Gl II}.
  \begin{enumerate}    
    \item {\em (Local regularity)} If $g$ satisfies \eqref{Gl growth cond I} and if $u\geq 0$ then
    $u\in W^{1,\infty}_{loc}(\R^n)\cap W^{2,q}_{loc}(\R^n)$ for all $q\in [1,\infty)$.
    \item {\em (Global regularity)} If $g$ satisfies \eqref{Gl growth cond
    II} and if $u\in L^p(\R^n)$ then $u\in
    W^{1,q}(\R^n)\cap W^{2,q'}(\R^n)$ for all $q\in [p,\infty],q'\in [p,\infty)$.   
    If in addition $V$ satisfies (H2) and $g$
    satisfies \eqref{Gl growth cond III} then $u\in W^{1,q}(\R^n)\cap W^{2,q'}(\R^n)$ for all $q\in
    [1,\infty],q'\in (1,\infty)$ and for all $0<\mu<\sqrt{\Sigma}$ there is $C_\mu>0$ such that $|u(x)|\leq C_\mu
    e^{-\mu|x|}$ in $\R^n$.
  \end{enumerate}  
  In both cases $u$ is a strong solution of \eqref{Gl II}.
\end{theorem}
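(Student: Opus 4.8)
The plan is to prove both parts by an $L^q$--bootstrap based on Calder\'{o}n--Zygmund estimates; the structural reason it works is that the restriction $1<p<\frac{n}{(n-2)_+}$ forces each bootstrap step to strictly raise the integrability exponent, which is also why the borderline $p=\frac{n}{n-2}$ must be left open. \emph{Part (1).} Write \eqref{Gl II} as $-\Delta u=f$ in $\cD'(\R^n)$ with $f:=g(x,u)-V(x)u$; since $|g(x,s)|\le C(1+|s|^p)$, $u\in L^p_{loc}(\R^n)$ and $V\in L^\infty(\R^n)$, we have $f\in L^1_{loc}(\R^n)$. The crucial foothold---which I expect to be the main obstacle---is to upgrade this to $u\in L^{q_0}_{loc}(\R^n)$ for some $q_0>p$. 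This is where $u\ge 0$ and the superlinear lower bound of \eqref{Gl growth cond I} enter: because $p>1$, the map $t\mapsto C_4t^p-\|V\|_{L^\infty}t$ is bounded below on $[0,\infty)$, so $-\Delta u=f\ge -C_3+C_4u^p-\|V\|_{L^\infty}u\ge -C$ for a constant $C$. Hence on every ball $u$ differs by a smooth term from a nonnegative superharmonic function (equivalently, $u$ equals a harmonic function plus the Newtonian potential of $f$), and nonnegative superharmonic functions lie in $L^q_{loc}$ for all $q<\frac{n}{(n-2)_+}$; since $p<\frac{n}{(n-2)_+}$ we may fix $q_0\in\big(p,\frac{n}{(n-2)_+}\big)$ and obtain $u\in L^{q_0}_{loc}(\R^n)$.

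Now we iterate. If $u\in L^q_{loc}(\R^n)$ with $q>p$, then $|f|\le C(1+|u|^p)$ gives $f\in L^{q/p}_{loc}$ with $q/p>1$, so interior $L^{q/p}$ elliptic regularity yields $u\in W^{2,q/p}_{loc}$, and Sobolev embedding gives $u\in L^{q'}_{loc}$ with $\tfrac1{q'}=\tfrac pq-\tfrac2n$ (and $u\in L^{q'}_{loc}$ for all $q'<\infty$ once $\tfrac pq-\tfrac2n\le 0$). On the scale $a_k=1/q_k$ the map $a\mapsto pa-\tfrac2n$ is strictly decreasing while $a<\tfrac2{n(p-1)}$, and $a_0<\tfrac1p<\tfrac2{n(p-1)}$ precisely because $p<\frac{n}{(n-2)_+}$; hence after finitely many steps $u\in L^\infty_{loc}(\R^n)$. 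Then $f\in L^\infty_{loc}$, so $u\in W^{2,q}_{loc}(\R^n)$ for all $q<\infty$ and, by the Sobolev--Morrey embedding, $u\in W^{1,\infty}_{loc}(\R^n)$. Integrating by parts in the distributional identity shows $-\Delta u+Vu=g(x,u)$ almost everywhere, so $u$ is a strong solution.

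\emph{Part (2).} The same local bootstrap gives $u\in L^\infty_{loc}(\R^n)$ (positivity is not needed here: $u\in L^p(\R^n)$ forces $|u|^p\in L^1(\R^n)\subset L^1_{loc}$, hence $f\in L^1_{loc}$, and the foothold again follows from the Newtonian potential). To globalise, write $(-\Delta+1)u=F$ with $F:=g(x,u)-V(x)u+u$, $|F|\le C(|u|+|u|^p)$, and recall that $(-\Delta+1)^{-1}$ is convolution with the Bessel kernel $G_2\ge 0$, which decays exponentially, lies in $L^r(\R^n)$ for every $r<\frac{n}{(n-2)_+}$, and maps $L^s(\R^n)$ boundedly into $W^{2,s}(\R^n)$ for $1<s<\infty$. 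If $u\in L^q(\R^n)$ with $q\ge p$, split $F=F\chi_{\{|u|>1\}}+F\chi_{\{|u|\le 1\}}$, the first summand lying in $L^1(\R^n)\cap L^{q/p}(\R^n)$ and the second in $L^{q''}(\R^n)$ for every $q''\in[p,\infty]$; estimating $G_2*(F\chi_{\{|u|>1\}})$ by Young's inequality together with the $W^{2,q/p}$ bound, and $G_2*(F\chi_{\{|u|\le1\}})$ by the $W^{2,q''}$ bound with $q''$ large, one obtains $u\in L^{q'}(\R^n)$ with $\tfrac1{q'}=\tfrac pq-\tfrac2n$---the two resulting integrability ranges overlap exactly because $p<\frac{n}{(n-2)_+}$. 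The same recursion drives $u$ to $L^\infty(\R^n)$, hence to $L^q(\R^n)$ for all $q\in[p,\infty]$; then $F\in L^q(\R^n)$ for all $q\in[p,\infty]$ (using $|u|^p\le\|u\|_{L^\infty}^{p-1}|u|$), and $u=G_2*F\in W^{2,q'}(\R^n)$ for $q'\in[p,\infty)$ and $u\in W^{1,q}(\R^n)$ for $q\in[p,\infty]$. Again $u$ is a strong solution.

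For the final assertion, $u\in W^{1,q}(\R^n)\cap L^q(\R^n)$ with $q>n$ makes $u$ continuous with $u(x)\to 0$ as $|x|\to\infty$. Given $\eta>0$, condition \eqref{Gl growth cond III} provides $R>0$ with $|g(x,u(x))|\le\eta|u(x)|$ for $|x|\ge R$, so $u$ solves $(-\Delta+V-W)u=0$ on $\R^n\setminus B_R$ with $\|W\|_{L^\infty(\R^n\setminus B_R)}\le\eta$. Since $\Sigma=\min\sigma(-\Delta+V)>0$ by (H2), the exponential decay estimate proved earlier (an adapted version of Agmon's method) applies and yields $|u(x)|\le C_\mu e^{-\mu|x|}$ for every $\mu<\sqrt{\Sigma-\eta}$; letting $\eta\to 0$ gives this for all $\mu<\sqrt{\Sigma}$. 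Finally, exponential decay together with boundedness gives $u\in L^q(\R^n)$ for all $q\in[1,\infty]$, whence $F$ (being $O(|u|)$) decays exponentially as well, so $u=G_2*F\in W^{2,q'}(\R^n)$ for all $q'\in(1,\infty)$ and $u\in W^{1,q}(\R^n)$ for all $q\in[1,\infty]$.
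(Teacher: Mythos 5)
Your argument is correct in substance, but it takes a genuinely different route from the paper. For part (1) the paper does not bootstrap locally: its key step is the global a priori bound $\int_{B_R}u^p\,dx\le cR^n$ obtained by testing with a compactly supported, truncated Bessel-function test function (Propositions \ref{Prop Testing} and \ref{Prop integrability positive solutions}) --- this is where $u\ge 0$ and the superlinear lower bound in \eqref{Gl growth cond I} really enter --- which yields $u\in L^p(\R^n;\omega_0)$ and hence the representation $u=T_\omega(g_\omega)$ via Propositions \ref{Prop uniqueness linear} and \ref{Prop representation formula}; local boundedness then comes from Simon's Kato-class estimates (Proposition \ref{Prop decayToZero}) and the regularity from the mapping properties of $T_\omega$. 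Your purely local argument (Weyl decomposition of $u$ on a ball into a harmonic function plus the Newtonian potential of $f1_B$, Young's inequality for the $L^1$ starting step, then Calder\'on--Zygmund once the exponent exceeds $1$) is more elementary and closer to the classical very-weak-solution bootstrap; it uses positivity only to control $|g(x,u)|$ (your superharmonicity detour is not actually needed), but it does not produce the global weighted bound that the paper's route gives as a byproduct. For part (2) the paper obtains $u\in L^\infty(\R^n)$, continuity and $u(x)\to 0$ in one stroke from the Kato-class result plus Pankov's invariance of the essential spectrum, whereas you run a global $L^q$ bootstrap through the Bessel potential and replace the essential-spectrum argument by the elementary perturbation bound $\inf\sigma\ge\Sigma-\eta$; both work. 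Two caveats: you use the identity $u=G_2*F$ repeatedly without justification --- this is exactly the content of Propositions \ref{Prop uniqueness linear} and \ref{Prop representation formula}, and in your setting ($u\in L^p(\R^n)$, $F\in L^1(\R^n)+L^\infty(\R^n)$) it does follow from a routine tempered-distribution/Fourier argument, but it must be stated --- and you invoke an Agmon-type exterior decay estimate as known, which the paper supplies as Proposition \ref{Prop exponentialdecay}. With these two points made explicit your proof is complete.
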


\begin{remark} \label{rem I}~
  \begin{enumerate}
    \item Note that for every compact set $K\subset\R^n$ the potential $V =
    1_{\R^n\setminus K}$ satisfies (H1),(H2) for every $\alpha\geq \frac{n-6}{2}$.
	\item In the case $n=3,4,5,6$ Theorem \ref{Thm unbounded solution} applies to
	every measurable function $V$ which satisfies $0<V_0\leq V\leq V_1$ almost everywhere for some
	positive constants $V_0,V_1$. For instance we find an unbounded distributional solution
	of the equation $-\Delta u + V(x)u = |u|^{p-1}u$ in the case $V\in W^{1,\infty}(\R^n)$ is
	strictly monotone in some direction $v\in\R^n$, e.g. $V(x)=\pi+\arctan(xv)$. This is quite
	interesting given the fact that in this case the only $H^1(\R^n)$-solution is the trivial one.
	Indeed, if $u\in H^1(\R^n)$ is a solution then $u\in H^2(\R^n)$ (see Theorem \ref{Thm
	regularity},(2)) and testing the equation with $\partial_v u$ leads to
	\begin{align*}
	  0 
	  &= \int_{\R^n} \Big( \nabla u \nabla (\partial_v u) + V u\partial_v u - |u|^{p-1}u\partial_v
	  u\Big)\,dx \\
	  &= \int_{\R^n} \partial_v \Big( \frac{1}{2}|\nabla u|^2 - \frac{1}{p+1} |u|^{p+1} \Big)
	    + \frac{1}{2} \int_{\R^n} V\partial_v (|u|^2)\,dx \\
	  &= -\frac{1}{2} \int_{\R^n} (\partial_v V) |u|^2\,dx
	\end{align*}
	by density of $C_0^\infty(\R^n)$ in	$H^2(\R^n)$. Hence, $u\equiv 0$ because $\partial_v V<0$ in $\R^n$.
	The above result is due to Tanaka~\cite{tanaka}, see also Theorem 1.3. in \cite{ikoma}.	
    \item If we add regularity assumptions on $V$ and $g$ in Theorem \ref{Thm regularity} then
    elliptic regularity theory will give better results. If $V$ and $g$ are both
    $C^\infty$-functions, say, then every positive distributional solution $u$ of \eqref{Gl II} is
    in fact a classical solution. Similarly, if in Theorem \ref{Thm unbounded solution} $V,g$ are
    both $C^\infty$-functions then part (ii) of Theorem  \ref{Thm unbounded solution} gives $U\in
    C^\infty(\R^n\setminus\{0\})$. 
    \item  By a suitable choice of test functions one can extend the local regularity
    result of Theorem~\ref{Thm regularity} to possibly sign-changing solutions of equation \eqref{Gl
    II} where the nonlinearity satisfies the more general inequality $|g(x,u)|\leq
    c(1+|u|^p)$.
  \end{enumerate}  
\end{remark}~

In the proof of Theorem \ref{Thm unbounded solution} we always require $0<\eps < \frac{2}{n-2}$
so that $\frac{n}{n-2}<p<\frac{n+2}{n-2}$ and variational methods are applicable.
Estimates involving $p-\frac{n}{n-2}$ will be carried out explicitly. Throughout the paper $B_r =
\{x\in\R^n : |x|<r\}$ is the open ball of radius $r$ in $\R^n$ and $c$ is a constant which can
change from line to line but which is independent of $p$. We use the symbol $\frac{n}{(n-2)_+}$ to
denote the value $\infty$ for $n=1,2$ and the value $\frac{n}{n-2}$ in the case $n\geq 3$.
Similarly the symbols $\frac{n}{(n-1)_+}$, $\frac{2n}{(6-n)_+}$ etc. are used. The assumptions
(H1), (H2) imply that the bilinear form
    \begin{align}\label{Gl Def innerproduct}
      \langle u,v \rangle_V := \int_{\R^n} \big( \nabla u\nabla v + V(x)uv \big)\,dx 
      \qquad (u,v \in H^1(\R^n))
    \end{align}
generates a norm $\|\cdot\|_V$ on $H^1(\R^n)$ which is equivalent to the standard $H^1$-norm $\|\cdot\|$. 

\medskip

 Finally let us recall
the definition of the Kato class $K_n$, cf. \cite{sim}. Let $h_n(x,y)=|x-y|^{2-n}$ for
$n\geq 3$, $h_2(x,y)=-\log|x-y|$ and $h_1(x,y)=1$. A measurable function
$W:\R^n\to \R$ belongs to $K_n$, $n\in \N$ if
\begin{align*}
\lim_{\rho\to 0} \sup_{x\in \R^n} \int_{\{|x-y|\leq \rho\}} h_n(x,y)|W(y)|\,dy&=0\;\,, \quad n\geq 2,\\ 
\sup_{x\in \R^n} \int_{\{|x-y|\leq 1\}}
|W(y)|\,dy&<\infty, \quad n=1.
\end{align*}
A norm on $K_n$ is given by (cf. \cite{sim}, p.453, (A15))
$$
\|W\|_{K_n} := \sup_{x\in \R^n} \int_{\{|x-y|\leq 1\}} h_n(x-y) |W(y)|\,dy.
$$
If $\Omega\subset\R^n$ is open we denote by $K_n(\Omega)$ the set of measurable functions $W:\R^n\to\R$ such that $W1_\Omega$
  lies in the Kato class $K_n$. The mapping $\|W\|_{K_n(\Omega)}:= \|W1_\Omega\|_{K_n}$ defines a
  seminorm on $K_n(\Omega)$. For every $q\in (\frac{n}{2},\infty]$ there exists a constant $c_q>0$ such
  that
  \begin{align}\label{Gl KatoNorm}
    \|W\|_{K_n(\Omega)}    
    \leq c_q \sup_{y\in\Omega} \|W\|_{L^q(B_1(y))}
  \end{align}   
  whenever the right hand side is finite. 


\section{Proof of Theorem \ref{Thm unbounded solution}}

  Our existence proof of an unbounded distributional solution $U$ is inspired
  by \cite{horreimck},\cite{mazpac}.   
  We start by constructing an approximate
 solution $u_0$ of equation \eqref{Gl I} which is unbounded near $0$. Then we determine a functional
  $J:H^1(\R^n)\to\R$ such that every critical point $\tilde{u}\in H^1(\R^n)$ of $J$ gives rise to a
  distributional solution $U:= u_0+\tilde{u}$ of \eqref{Gl I} which has the desired properties. 
  The main difficulty will be to prove that $J$ has a critical point.
  The proof of the parts (i) and (ii), (iii), (iv) will be given in section \ref{distrib_sec},
  \ref{decay_sec}, \ref{pos_sec} respectively.

  \subsection{Construction of an unbounded approximate solution}

  For $p > \frac{n}{n-2}$ let the function $u_1\in C^\infty(\R^n\setminus\{0\})$ be defined by
    \begin{align}\label{Gl Def u1}
      u_1(x) := c_{n,p} |x|^{-\frac{2}{p-1}}\quad\mbox{where}\quad
      c_{n,p} = \big(\frac{2}{p-1}(n-2-\frac{2}{p-1})\big)^{\frac{1}{p-1}}.
    \end{align}
    Notice that $c_{n,p}\to 0$ as $p\searrow \frac{n}{n-2}$ and
    \begin{align} \label{Gl PDE u1}
      -\Delta u_1=u_1^p \qquad\text{in } \R^n\setminus\{0\}.
    \end{align}
    Replacing $u_1$ outside a suitable ball $B_\rho$ by an exponentially decreasing classical solution $u_2$
    of
     \begin{align} \label{Gl PDE u2}
      -\Delta u_2 + u_2 =  u_2^p \qquad\text{in } \R^n\setminus B_\rho
    \end{align}
    we define the approximate solution
	\begin{align}\label{Gl App sol}
         u_0(x) :=
      \begin{cases}
        u_1(x) , &x\in B_\rho, \\
        u_2(x) , &x\in \R^n\setminus B_\rho.
      \end{cases}
    \end{align}
    It turns out that such a function $u_0$ can be constructed with properties stated next. To
    state the Proposition let us define 
    $$
     \partial_\nu^+ u_0(x) = \lim_{t\to 0+} \frac{u_0(x)-u_0(x-t\nu(x))}{t}, \qquad \partial_\nu^-
     u_0(x) = \lim_{t\to 0+} \frac{u_0(x+t\nu(x))-u_0(x)}{t} 
     $$
    for $\nu(x)=\frac{x}{|x|}$ whenever the limits exist.

    \begin{proposition}[Existence of an approximate solution] \label{Prop Existence approximate
    solution}
      Let $n\in\N,n\geq 3$. Then there exists a radius $\rho\geq 1$ and a constant $c>0$ such that
      for all $p\in (\frac{n}{n-2},\frac{n+2}{n-2})$ there is a radially symmetric function
      $u_0:\R^n\setminus\{0\}\to (0,\infty)$ with the following properties:
      \begin{enumerate}
        \item[(i)] $u_0\in C^2(B_\rho\setminus\{0\})$ solves \eqref{Gl PDE u1} in
        $B_\rho\setminus\{0\}$ in the classical sense.
        \item[(ii)] $u_0\in C^2(\R^n\setminus \overline{B}_\rho)$ solves \eqref{Gl PDE u2} in
        $\R^n\setminus \overline{B}_\rho$ in the classical sense.             
        \item[(iii)] $u_0\in C(\R^n\setminus\{0\})$ and all first and second order
        derivatives of $u_0$ admit continuous extensions to $\partial B_\rho$ from either side.
        Moreover, for all $\delta>0$ we have $u_0 \in
        H^1(\R^n\setminus B_\delta)$.   
        \item[(iv)] $\lim_{x\to 0} u_0(x) = +\infty$.
        \item[(v)] $|\partial_\nu^+ u_0(x)-\partial_\nu^- u_0(x)| \leq c\, c_{n,p}$ for all $x\in
        \partial B_\rho$.      
        \item[(vi)] $u_0$ satisfies the estimate
        \begin{equation}
      	u_0(x) \leq \left\{\begin{array}{ll}
       	c_{n,p} |x|^{-\frac{2}{p-1}} & \mbox{for } x\in B_\rho, \vspace{\jot} \\
     	c_{n,p} e^{-\frac{|x|-\rho}{2}} & \mbox{for } x\in \R^n\setminus B_\rho.
     	\end{array}\right.  \label{Gl_estimate}
    	\end{equation}
    	In particular, $u_0\in L^q(\R^n)$ for all $q\in [1,\frac{n(p-1)}{2})$.
      \end{enumerate}     
    \end{proposition}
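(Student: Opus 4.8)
The plan is to take $u_0=u_1$ on $B_\rho$, where $u_1$ is the explicit function \eqref{Gl Def u1} that already solves \eqref{Gl PDE u1}, and to construct $u_0=u_2$ on $\R^n\setminus B_\rho$ by a barrier argument for the corresponding radial ODE, with $\rho$ (depending only on $n$) chosen large enough for everything to go through uniformly in $p$. It is convenient to write $\gamma:=\tfrac{2}{p-1}$, so that $\gamma(p-1)=2$, $u_1=c_{n,p}|x|^{-\gamma}$ with $c_{n,p}^{\,p-1}=\gamma(n-2-\gamma)$, and $\gamma\in(\tfrac{n-2}{2},n-2)$ for $p\in(\tfrac{n}{n-2},\tfrac{n+2}{n-2})$; in particular $c_{n,p}>0$, $c_{n,p}^{\,p-1}=\gamma(n-2-\gamma)<\tfrac{(n-2)^2}{4}$ with a bound independent of $p$, and $c_{n,p}\to0$ as $p\searrow\tfrac{n}{n-2}$. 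Then (i) is \eqref{Gl PDE u1}, (iv) holds because $\gamma>0$, the $B_\rho$-line of \eqref{Gl_estimate} is immediate, and $u_0\in L^q(B_\rho)$ iff $q\gamma<n$, i.e. $q<\tfrac{n(p-1)}{2}$.

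For the exterior piece I would solve, by sub- and supersolutions in the radial class, the problem
$$u_2''+\tfrac{n-1}{r}u_2'-u_2+u_2^{p}=0\ \text{ on }(\rho,\infty),\qquad u_2(\rho)=M:=u_1(\rho)=c_{n,p}\rho^{-\gamma},\qquad u_2(r)\to0,$$
with the explicit barriers $\bar u(r):=M\,e^{-(r-\rho)/2}$ and $\underline u(r):=M\,e^{-n(r-\rho)}$, both of which coincide with $u_1$ at $r=\rho$. A one-line computation shows that $\bar u$ is a supersolution whenever $M^{p-1}\le\tfrac34$; since $\gamma(p-1)=2$ one has $M^{p-1}=\gamma(n-2-\gamma)\rho^{-2}\le\tfrac{(n-2)^2}{4}\rho^{-2}$, so it suffices to fix $\rho=\rho(n)\ge\max\{1,\tfrac{n-2}{\sqrt3}\}$, uniformly in $p$. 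Likewise $\underline u$ is a subsolution for every admissible $p$ because $1-n^2+\tfrac{(n-1)n}{r}\le 1-n\le0$ on $[\rho,\infty)$, and $\underline u\le\bar u$ there since $n\ge\tfrac12$. A monotone iteration started at $\bar u$ (equivalently, solving on bounded shells $[\rho,R]$ and letting $R\to\infty$) yields a classical solution $u_2\in C^2([\rho,\infty))$ with $\underline u\le u_2\le\bar u$; positivity follows from the strong maximum principle applied to $-\Delta u_2+(1-u_2^{p-1})u_2=0$, and ODE bootstrapping gives $u_2\in C^\infty((\rho,\infty))$ with $u_2$ and all its derivatives bounded by a constant times $e^{-(r-\rho)/2}$. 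Setting $u_0:=u_2$ outside $B_\rho$, part (ii) holds, the $\R^n\setminus B_\rho$-line of \eqref{Gl_estimate} follows from $\rho^{-\gamma}\le1$, and $u_1(\rho)=u_2(\rho)$ yields the continuity of $u_0$ across $\partial B_\rho$ and the continuous one-sided extensions of all first and second derivatives, i.e. (iii); since $u_0$ is continuous and piecewise $C^1$, the jump of the radial derivative across $\partial B_\rho$ does not affect $H^1$-membership, and together with the exponential decay this gives $u_0\in H^1(\R^n\setminus B_\delta)$ for every $\delta>0$.

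It remains to bound the derivative jump (v). Since $u_1,\underline u,\bar u$ all equal $M$ at $r=\rho$ while $\underline u\le u_2\le\bar u$ on $(\rho,\infty)$, comparing the right difference quotients at $\rho$ gives $-nM=\underline u'(\rho)\le u_2'(\rho)\le\bar u'(\rho)=-\tfrac12 M$, so $|u_2'(\rho)|\le nM\le n\,c_{n,p}$ (using $M=c_{n,p}\rho^{-\gamma}\le c_{n,p}$); on the interior side $|u_1'(\rho)|=\gamma c_{n,p}\rho^{-\gamma-1}\le(n-2)c_{n,p}$. With the one-sided derivatives defined before the Proposition, $\partial_\nu^+u_0(x)=u_1'(\rho)$ and $\partial_\nu^-u_0(x)=u_2'(\rho)$ for $x\in\partial B_\rho$, whence $|\partial_\nu^+u_0(x)-\partial_\nu^-u_0(x)|\le(2n-2)\,c_{n,p}$, which is (v) with $c=2n-2$.

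The only genuine difficulty is the uniformity in $p$ as $p\searrow\tfrac{n}{n-2}$, i.e. as $\gamma\nearrow n-2$: the radius $\rho$, the constant $c$, and the implicit decay rate of $u_2$ must be fixed once and for all. This is precisely what the identities $c_{n,p}^{\,p-1}=\gamma(n-2-\gamma)$ and $\gamma(p-1)=2$ provide — the first keeps $c_{n,p}^{\,p-1}$ and $M^{p-1}$ bounded in terms of $n$ alone, the second collapses every $p$-dependent power ($|x|^{-\gamma p}$, $r^{-\gamma(p-1)}$, $\rho^{-\gamma(p-1)}$, \dots) to a fixed exponent. Once this is noted, checking that $\bar u,\underline u$ are barriers, solving the exterior problem between them, and estimating the exponential decay of $u_2,u_2',u_2''$ are all standard, and I would expect most of the write-up to be taken up by these routine verifications.
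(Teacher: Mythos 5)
Your proposal is correct and follows the paper's overall strategy --- glue the explicit singular solution $u_1$ on $B_\rho$ to an exterior decaying radial solution $u_2$ with matching value $M=c_{n,p}\rho^{-2/(p-1)}$ on $\partial B_\rho$, obtained by sub- and supersolutions for the radial ODE, with $\rho$ fixed uniformly in $p$ via the identity $c_{n,p}^{p-1}=\frac{2}{p-1}(n-2-\frac{2}{p-1})$ --- but the execution of the exterior step differs in a useful way. The paper's Lemma~\ref{Lem extension} uses the constant $c_0$ as supersolution and the modified-Bessel solution $v(r)=\kappa r^{\frac{2-n}{2}}K_{\frac{n-2}{2}}(r)$ of the linear equation as subsolution, solves on shells $[\rho,R]$, and then needs several further steps: monotonicity and convexity of $w_{2,R}$ to get the uniform $C^1$ bounds for the Arzel\`a--Ascoli limit, a translation/test-function argument to show $\tilde u_2(\infty)=0$, a separate comparison with $z(r)=c_0e^{-\sqrt{1-c_0^{p-1}}(r-\rho)}$ to obtain the exponential bound in (vi), and the bound $0\geq w_{2,R}'\geq v'(\rho)$ for (v). Your explicit barriers $Me^{-(r-\rho)/2}$ and $Me^{-n(r-\rho)}$, both equal to $M=u_1(\rho)$ at $r=\rho$, deliver the decay estimate of (vi) immediately and, via the squeeze of right difference quotients at $\rho$, the two-sided bound $-nM\leq u_2'(\rho)\leq-\frac12M$ for (v), so you avoid the Bessel function, the monotonicity argument and the separate proof that $u_2\to0$; your barrier verifications are correct, and the uniform choice $\rho\geq\max\{1,(n-2)/\sqrt3\}$ enforces exactly the smallness $M^{p-1}\leq\frac34$ that plays the role of the paper's $c_0^{p-1}\leq\frac34$. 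Two points you leave implicit are indeed routine but should be said: passing to the limit $R\to\infty$ (or running the monotone iteration) still requires uniform local bounds on the derivatives of the shell solutions, obtainable from the enclosure and the ODE much as in the paper; and the exponential decay of $u_2'$ you attribute to ``bootstrapping'' is most cleanly read off from the integrated equation $(r^{n-1}u_2')'=r^{n-1}(u_2-u_2^p)$ together with the enclosure $0<u_2<1$ --- note that for (iii) one only needs $u_2'\in L^2(r^{n-1}\,dr)$, which this gives at once, and that the squeeze argument for (v) presupposes $u_2\in C^1$ up to $r=\rho$, which your construction does provide.
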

    
    For a  proof of this result we refer to Appendix A.

    \subsection{Variational setting} \label{var_set_sec}

    Given $u_0$ from Proposition~\ref{Prop Existence approximate solution} we prove existence of
    an unbounded distributional solution $U$ of \eqref{Gl I} using the ansatz
    \begin{align}\label{Gl Def uv}
	  U := u_0 + \tilde{u}
    \end{align}
    where $\tilde{u}\in H^1(\R^n)$ will be constructed as a local minimizer of a suitable functional
    $J:H^1(\R^n)\to\R$. Once the existence of $\tilde u$ is shown we will see that
    $U:=u_0+\tilde u$ is a weak solution of \eqref{Gl I} on $\R^n\setminus B_\delta$ for every
    $\delta>0$ and a distributional solution of \eqref{Gl I} on $\R^n$. The definition of $J$
    stems from the following motivation.

    \medskip
    
    For a fixed test function $\phi \in C_0^\infty(\R^n\setminus\{0\})$ we
    have by Proposition \ref{Prop Existence approximate solution} 
   \begin{align}      \label{Gl Equation u0}
    \int_{\R^n} (\nabla u_0 \nabla \phi + V(x) u_0 \phi)\,dx
     &= \int_{\R^n} u_0^p\phi \,dx  + \oint_{\partial B_\rho} (\partial_{\nu}^+ u_0 -
       \partial_{\nu}^- u_0)\phi \,d\sigma \\ &\quad+ 
     \int_{B_\rho}  V(x) u_0\phi\,dx + \int_{\R^n\setminus B_\rho} (V(x)-1) u_0\phi\,dx
      \notag
   \end{align}
  Since we want $U$ to be a weak solution of \eqref{Gl
  I} in $\R^n\setminus B_\delta$ for all $\delta>0$ we require
  \begin{align*}
    \int_{\R^n} (\nabla U \nabla \phi + V(x) U \phi)\,dx
     = \int_{\R^n} \Gamma(x) |U|^{p-1}U \phi \,dx.
  \end{align*}
  Hence, the function $\tilde{u}\in H^1(\R^n)$ that we seek must satisfy
   \begin{align}
     \int_{\R^n} (\nabla \tilde{u}\nabla\phi + V(x)\tilde{u}\phi)\,dx
     &= \int_{\R^n} \Big( \Gamma(x)|u_0+\tilde{u}|^{p-1}(u_0+\tilde u)- u_0^p\Big)\phi\,dx
     \label{Gl Equation tildeu} \\
     &\quad - \int_{B_\rho} V(x) u_0\phi\,dx  - \int_{\R^n\setminus B_\rho} (V(x)-1)
     u_0\phi\,dx  \notag \\
	&\quad - \oint_{\partial B_\rho} (\partial_{\nu}^+ u_0 -  \partial_{\nu}^- u_0)\phi\,d\sigma.
	  \notag
   \end{align}
  Thus, we will look for critical points $\tilde u \in H^1(\R^n)$ of the functional $J:H^1(\R^n)\to\R$ given by
  \begin{align}\label{Gl Def functional}
    J[u] := \frac{1}{2}\|u\|_V^2  - J_1[u] - J_2[u] + J_3[u]
  \end{align}
  where $\|\cdot\|_V$ is defined by \eqref{Gl Def innerproduct} and
  $J_i:H^1(\R^n)\to\R$ ($i=1,2,3$) are defined by
  \begin{align*}
    J_i[u] &= \int_{\R^n} F_i(u,x)\,dx, \quad i=1,2,\\
    J_3[u]
    &= \int_{B_\rho} V(x) u_0 u \,dx +  \int_{\R^n\setminus
    B_\rho} (V(x)-1) u_0u\,dx + \oint_{\partial B_\rho} (\partial_\nu^+ u_0 - \partial_\nu^-
    u_0) \gamma(u) \,d\sigma. 
  \end{align*}
  Here $\gamma:H^1(\R^n)\to L^2(\partial B_\rho)$ denotes the trace operator
  and the functions $F_1,F_2:\R\times\R^n\to\R$ are given by
  \begin{align*}
    F_1(s,x) &= \frac{1}{p+1} \big( |s+u_0(x)|^{p+1} - u_0(x)^{p+1}- (p+1) u_0(x)^p s\big), \\
    F_2(s,x) &=  \frac{\Gamma(x)-1}{p+1} \big( |u_0(x)+s|^{p+1}-u_0(x)^{p+1}\big).
  \end{align*}
  We will prove in Proposition \ref{Prop welldefinedness} that $J$ is well-defined and continuously
  Fr\'{e}chet-differentiable.
  
  \medskip 
  
  In order to find a positive distributional solution of \eqref{Gl I} in the case $\Gamma\geq 0$ we
  introduce the functional $\hat J:H^1(\R^n)\to\R$ given by
  \begin{align}
     \hat J[u] := \frac{1}{2}\|u\|_V^2  - \int_{\R^n}  \hat F_1(u,x)\,dx - \int_{\R^n}
     \hat F_2(u,x)\,dx + J_3[u]
     \label{Gl Def functional 2}
  \end{align}   
  where
  \begin{align*}
    \hat F_1(s,x) &= \frac{1}{p+1} \big( (s+u_0(x))_+^{p+1} - u_0(x)^{p+1}- (p+1) u_0(x)^ps\big),
    \\ 
    \hat F_2(s,x) &=  \frac{\Gamma(x)-1}{p+1} \big( (u_0(x)+s)_+^{p+1}-u_0(x)^{p+1}\big)
  \end{align*}
  The results of the upcoming section will hold for both $J$ and $\hat J$ due to the fact that 
  the inequalities \eqref{Gl ineq F1},\eqref{Gl ineq F2},\eqref{Gl Estimate difference quotient
  Fi},\eqref{est_fistrich} and thus \eqref{Gl Estimate F1}-\eqref{estimate_j2},\eqref{Gl
  continuityofderivative} also hold for $\hat F_1,\hat F_2$.
  
  \subsection{Existence of a critical point} \label{exist_sec}

  The proof of Theorem~\ref{Thm unbounded solution} relies on the following results. 
  First we show in Proposition~\ref{Prop welldefinedness} that the functional $J$ is well-defined
  and continuously Fr\'{e}chet-differentiable  for all $p\in(\frac{n}{n-2},\frac{n+2}{n-2})$. 
  In Proposition~\ref{Prop local structure of J} we prove next $J[u] \geq m>0$ 
  for all $u\in H^1(\R^n)$ with $\|u\|=r_0$ and all $p\in
  (\frac{n}{n-2},\frac{n}{n-2}+\eps)$  for appropriately chosen $m,r_0,\eps> 0$.  
  Using Ekeland's variational principle we then prove in
  Proposition~\ref{Prop existence critical point} the existence of a critical point $\tilde{u}$ of
  $J$. Finally, in Lemma~\ref{Lemma very weak solution} we show that $U:= u_0+\tilde{u}$
  indeed defines an unbounded distributional solution of \eqref{Gl I}.

  \medskip

  We start by proving that $J$ is well-defined and continuously Fr\'{e}chet-differentiable.

  \begin{proposition}\label{Prop welldefinedness}
    Let the assumptions of Theorem~\ref{Thm unbounded solution} hold. Then the functional $J$ given
    by \eqref{Gl Def functional} is well-defined and continuously Fr\'{e}chet-differentiable
    for all $p\in(\frac{n}{n-2},\frac{n+2}{n-2})$ with Fr\'{e}chet-derivative
    \begin{align*}
      J'[u](\phi)
      &=  \langle u,\phi \rangle_V  - \int_{\R^n} \big( F_1'(u,x) \phi +
      F_2'(u,x)\phi \big)\,dx  +  J_3[\phi].
    \end{align*}
	Here $'$ refers to the partial derivative with respect to the first variable.
  \end{proposition}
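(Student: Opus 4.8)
The plan is to treat the quadratic term and the three functionals $J_1,J_2,J_3$ separately, the quadratic term $\frac12\|\cdot\|_V^2$ being obviously smooth by (H1),(H2) and the equivalence of $\|\cdot\|_V$ with the standard $H^1$-norm. The main work is to show that $J_1,J_2$ are well-defined and $C^1$ on $H^1(\R^n)$; $J_3$ is affine-linear (plus a bounded linear trace term), so its smoothness reduces to showing that each of the three defining integrals is a bounded linear functional of $u$. First I would record the pointwise estimates on $F_1,F_2$ and their derivatives. Using the elementary inequality $\big||a+s|^{p+1}-|a|^{p+1}-(p+1)|a|^{p-1}as\big|\le c(|a|^{p-1}s^2+|s|^{p+1})$ for $1<p<\frac{n+2}{n-2}$, one gets $|F_1(s,x)|\le c\big(u_0(x)^{p-1}s^2+|s|^{p+1}\big)$ and $|F_1'(s,x)|\le c\big(u_0(x)^{p-1}|s|+|s|^p\big)$, and similarly $|F_2(s,x)|\le c|\Gamma(x)-1|\big(u_0(x)^p|s|+|s|^{p+1}\big)$, $|F_2'(s,x)|\le c|\Gamma(x)-1|\big(u_0(x)^p+|s|^p\big)$, where I use $\Gamma\in L^\infty$ so $|\Gamma(x)-1|$ is globally bounded. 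These are exactly the inequalities \eqref{Gl ineq F1}, \eqref{Gl ineq F2}, \eqref{Gl Estimate difference quotient Fi}, \eqref{est_fistrich} the text refers to.

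Next I would verify integrability. For the $|s|^{p+1}$ and $|s|^p$ contributions one uses the Sobolev embedding $H^1(\R^n)\hookrightarrow L^{p+1}(\R^n)$, valid precisely because $p+1<\frac{2n}{n-2}$. For the terms involving $u_0$, the estimate \eqref{Gl_estimate} of Proposition~\ref{Prop Existence approximate solution} shows $u_0^{p-1}\in L^r(\R^n)$ and $u_0^p\in L^r(\R^n)$ for a suitable range of $r$: indeed $u_0^{p-1}(x)\sim c_{n,p}^{p-1}|x|^{-2}$ near $0$ with decay at infinity, so $u_0^{p-1}\in L^r_{loc}$ for $r<n/2$, and one checks $u_0^{p-1}s^2$ is integrable by Hölder against $s^2\in L^{n/(n-2)}$ (again Sobolev), and $u_0^p|s|$ is integrable by Hölder against $|s|\in L^{p+1}$; this is where the borderline role of $p=\frac{n}{n-2}$ enters, giving room as long as $p<\frac{n}{n-2}+\eps$ — but in fact the Proposition claims well-definedness for the whole range $p<\frac{n+2}{n-2}$, so I would check the exponent arithmetic holds throughout that range (the critical Sobolev exponent is the only constraint). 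Having the integrands in $L^1$, $J_1,J_2$ are well-defined real numbers for each $u\in H^1(\R^n)$.

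For Fréchet-differentiability I would use the standard superposition-operator argument: show the Gâteaux derivative at $u$ in direction $\phi$ equals $\int_{\R^n}(F_i'(u,x)\phi)\,dx$, via the difference-quotient bound \eqref{Gl Estimate difference quotient Fi} and dominated convergence (the dominating function being integrable by the same Hölder/Sobolev estimates, uniformly for $|t|\le 1$), and then show $u\mapsto F_i'(u,\cdot)$ is continuous from $H^1(\R^n)$ into the dual space — concretely, that $u_k\to u$ in $H^1$ implies $\sup_{\|\phi\|\le 1}\big|\int (F_i'(u_k,x)-F_i'(u,x))\phi\,dx\big|\to 0$. The latter follows from the growth bound on $F_i'$ together with the fact that $H^1$-convergence gives $L^{p+1}$-convergence, passing to a subsequence with a.e. convergence and an $L^{p+1}$-dominating function, and invoking a Vitali-type / generalized-dominated-convergence argument; the $u_0$-weighted terms are handled by Hölder as above. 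A Gâteaux-differentiable functional with continuous derivative is Fréchet-differentiable, which finishes the proof. The main obstacle is purely bookkeeping: tracking the Hölder exponents so that every product $u_0^{p-1}s^2$, $u_0^p|s|$, $|s|^{p+1}$ lands in $L^1$ simultaneously, and making the dominating functions in the differentiability step uniform in the parameter $t$ and stable under $H^1$-convergence — there is no conceptual difficulty once the pointwise estimates and \eqref{Gl_estimate} are in hand.
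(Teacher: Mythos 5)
Your overall architecture (pointwise estimates for $F_1,F_2$, then a Gâteaux-derivative-plus-continuity argument instead of the paper's direct second-order remainder bound) is acceptable, but the integrability claims at the core of the well-definedness step are false, and they fail precisely because you discard the hypotheses that the paper's proof is built on. For $J_2$ you replace the H\"older-type condition (H3) by mere boundedness of $\Gamma$, and then claim that $u_0^p|s|$ is integrable by H\"older against $|s|\in L^{p+1}$. Near the origin $u_0(x)^p\sim c_{n,p}^p|x|^{-\frac{2p}{p-1}}$, so this pairing (or any pairing with $|u|\in L^q$, $q\le\frac{2n}{n-2}$, or with Hardy's $\frac{|u|}{|x|}\in L^2$) requires $\frac{2(p+1)}{p-1}<n$, i.e. $p>\frac{n+2}{n-2}$ --- which fails on the entire range $p\in(\frac{n}{n-2},\frac{n+2}{n-2})$. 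With only $\Gamma\in L^\infty$ the term $\int(\Gamma-1)u_0^p u\,dx$, and hence $J_2$ and $J_2'$, is simply not defined on all of $H^1(\R^n)$; the paper keeps the factor $|x|^\beta$ from (H3) with $\beta>\frac{n-2}{2}$ exactly so that $|x|^{\beta-\frac{p+1}{p-1}}\in L^2(B_\rho)$ and Hardy's inequality can absorb the remaining $|x|^{-1}$, cf. \eqref{Gl Estimate F2}, \eqref{estimate_j2}. Your treatment of the $u_0^{p-1}s^2$ term has the same defect at the borderline: H\"older against $s^2\in L^{\frac{n}{n-2}}$ would need $u_0^{p-1}\in L^{n/2}_{loc}$, but $u_0^{p-1}\sim c_{n,p}^{p-1}|x|^{-2}$ just misses $L^{n/2}$ near $0$. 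The tool that closes this is Hardy's inequality $\int_{B_\rho}\frac{|u|^2}{|x|^2}\,dx\le c\|u\|^2$, which you never invoke, although it is also what makes the differentiability and continuity-of-derivative steps work (there you again write ``handled by H\"older as above'', inheriting the same gap).

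A second, smaller gap concerns $J_3$: you dismiss it as obviously a bounded linear functional, but the first integral $\int_{B_\rho}Vu_0u\,dx$ is exactly where (H1) with $\alpha\ge\frac{n-6}{2}$ enters. One must write $|Vu_0u|\le c\,c_{n,p}|x|^{\alpha+\frac{p-3}{p-1}}\frac{|u|}{|x|}$, check $|x|^{\alpha+\frac{p-3}{p-1}}\in L^2(B_\rho)$ as in \eqref{def_dp}, and apply Hardy once more; the remark following the proposition in the paper shows that for $\alpha<\frac{n-6}{2}$ this integral can be $+\infty$ for suitable $u\in H^1(\R^n)$, so the verification cannot be skipped. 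In short, the bookkeeping you defer is not routine: as written the key H\"older pairings fail, and repairing them requires precisely the ingredients you dropped --- (H3) with $\beta>\frac{n-2}{2}$, (H1) with $\alpha\ge\frac{n-6}{2}$, and Hardy's inequality.
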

  \begin{proof}~
    {\it $J$ is well-defined:}
    First we show that $J_1,J_2$ are well-defined. The estimates
    \begin{align}
      |F_1(s,x)|
      &\leq c \big(  u_0(x)^{p-1}s^2 + |s|^{p+1} \big), \label{Gl ineq F1}\\
	  |F_2(s,x)|
      &\leq c |\Gamma(x)-1| \, \big(  u_0(x)^p|s| +  |s|^{p+1} \big)\label{Gl ineq F2}
    \end{align}
	together with \eqref{Gl_estimate} and (H3) imply
    \begin{align}
       |F_1(s,x)|
       &\leq c\,
       \begin{cases}
         |s|^{p+1}+c_{n,p}^{p-1}\frac{|s|^2}{|x|^2}, &   \text{if }  x\in B_\rho \\
         |s|^{p+1}+ c_{n,p}^{p-1}|s|^2, &   \text{if } x\in \R^n\setminus B_\rho,
       \end{cases} \label{Gl Estimate F1} \\       
        |F_2(s,x)|
       &\leq
       c\,
       \begin{cases}
          |s|^{p+1} + c_{n,p}^p |x|^{\beta-\frac{p+1}{p-1}}\,\frac{|s|}{|x|}, &
           \text{if }  x\in B_\rho \\
          |s|^{p+1} + c_{n,p}^p e^{-\frac{p}{2}|x-\rho|} |s|, &  \text{if }  x\in \R^n\setminus
          B_\rho.
       \end{cases}    \label{Gl Estimate F2}
    \end{align}
    By Hardy's inequality we obtain from \eqref{Gl Estimate F1}
    \begin{align}
    |J_1[u]| 
    &\leq c \left(\int_{\R^n} |u|^{p+1}\,dx + c_{n,p}^{p-1} \int_{B_\rho}
    \frac{|u|^2}{|x|^2}\,dx +c_{n,p}^{p-1}\int_{\R^n\setminus B_\rho} u^2\,dx\right)
    \label{estimate_j1}\\ 
    &\leq c(\|u\|^{p+1}+c_{n,p}^{p-1}\|u\|^2). \nonumber
    \end{align}
    Since $\beta>\frac{n-2}{2}$ by (H3) and $p>\frac{n}{n-2}$ we have 
    $\| |x|^{\beta-\frac{p+1}{p-1}} \|_{L^2(B_\rho)} \leq c$. Hence \eqref{Gl Estimate F2} and
    Hardy's inequality imply
    \begin{align}
      |J_2[u]| 
      &\leq c \left(\int_{\R^n} |u|^{p+1}\,dx + c_{n,p}^p \int_{B_\rho}
         |x|^{\beta-\frac{p+1}{p-1}}\frac{|u|}{|x|}\,dx+ c_{n,p}^p \int_{\R^n\setminus
        B_\rho}e^{-\frac{p}{2}|x-\rho|} |u|\,dx\right) \label{estimate_j2}\\ 
      & \leq c(\|u\|^{p+1}+c_{n,p}^p \|u\|). \nonumber
    \end{align}    
    Therefore $J_1,J_2$ are well-defined.
    
    It remains to prove that $J_3$ is
    well-defined. From $\alpha\geq \frac{n-6}{2}$ by assumption (H1) and $p>\frac{n}{n-2}$ we infer
    $|x|^{\alpha+\frac{p-3}{p-1}}\in L^2(B_\rho)$ with 
    \begin{equation}
      D(p):= \| |x|^{\alpha+\frac{p-3}{p-1}}\|_{L^2(B_\rho)}
        \leq c\,\big( 2\alpha+\frac{2p-6}{p-1}+n \big)^{-1/2}.
      \label{def_dp}
     \end{equation}
    Therefore \eqref{Gl_estimate} and Hardy's inequality yield
      \begin{align} \label{Gl estimate u_0u}
      \int_{B_\rho} |V(x) u_0 u| \,dx
      \leq c \,c_{n,p} \int_{B_\rho} |x|^{\alpha+\frac{p-3}{p-1}} \frac{|u|}{|x|}\,dx 
      \leq c\, c_{n,p} D(p)\|u\| 
      \end{align}
      so that the first integral in $J_3$ is well-defined on $H^1(\R^n)$. The remaining two integrals
     in $J_3$ are also well-defined on $H^1(\R^n)$ since $u_0$ decays exponentially at infinity and
     since the one-sided derivatives in the boundary integral exist by
     Proposition~\ref{Prop Existence approximate solution} (i),(ii). Hence, $J$ is well-defined.

    \medskip

	\noindent
	{\it Fr\'{e}chet-differentiability:} Since $J_3$ is linear we only have to deal with
	$J_1,J_2$. Similar to the calculations above we get for $i=1,2$, $x\in\R^n$, $s,t\in \R$ 
	\begin{align}\label{Gl Estimate difference quotient Fi}
      &|F_i(s+t,x)-F_i(s,x)- t F_i'(s,x)| \nonumber\\
      &\leq c \big| |u_0(x)+s+t|^{p+1} - |u_0(x)+s|^{p+1} -
        (p+1)|u_0(x)+s|^{p-1}(u_0(x)+s) t \big| \nonumber \\
      &\leq c\, \big( |u_0(x)+s|^{p-1} t^2 + |t|^{p+1} \big) \\
      &\leq c\, \big( u_0(x)^{p-1} t^2 + |s|^{p-1}t^2 + |t|^{p+1} \big) \nonumber,
     \end{align}
     where for $i=2$ we estimated $|\Gamma(x)-1|\leq \|\Gamma\|_\infty+1$. Hardy's and
     Sobolev's inequality and the exponential decay of $u_0$ from \eqref{Gl_estimate} yield      
     $$ 
       \int_{\R^n} |F_i(u+h,x)-F_i(u,x)- h  F_i'(u,x)|\,dx \leq c(\|h\|^2+ \|h\|^{2p}), \qquad i=1,2, 
     $$
	for all $u,h\in H^1(\R^n)$ which shows that the functionals $J_1, J_2$ are
	Fr\'{e}chet-differentiable.
	
  \medskip
  
    \noindent
    {\it Continuity of the Fr\'{e}chet-derivative:} Again we only need to
    consider $J_1'$ and $J_2'$. By the mean value theorem we
    get for $i=1,2$
    \begin{align}
      |F_i'(s,x)-F_i'(t,x)| 
      &\leq c\,\bigl||s+u_0(x)|^{p-1}(s+u_0(x))-|t+u_0(x)|^{p-1}(t+u_0(x)) \bigr| \nonumber\\
      & = c\,|s-t| |\sigma+u_0(x)|^{p-1}  \quad \mbox{ for $\sigma$ between $s,t$}  \label{est_fistrich}\\
      & \leq c |s-t|(|s|^{p-1}+|t|^{p-1}+ |x|^{-2}) \nonumber.
    \end{align}
    Hence, if $(u_j)$ converges to $u$ in $H^1(\R^n)$ and if $\phi\in H^1(\R^n)$ with
    $\|\phi\|=1$ then
    \begin{align} \label{Gl continuityofderivative}
    |J_i'[u_j](\phi)-J_i'[u](\phi)|
     &\leq c \int_{\R^n} \big(|u|^{p-1}+|u_j|^{p-1}+|x|^{-2}\big)|u_j-u| |\phi|\,dx
     \nonumber\\ 
     &\leq c
     (\|u\|_{L^{p+1}(\R^n)}^{p-1}+\|u_j\|_{L^{p+1}(\R^n)}^{p-1})\|u_j-u\|_{L^{p+1}(\R^n)}\|\phi\|_{L^{p+1}(\R^n)}
     \\
     & \quad + c\|u_j-u\|\|\phi\| \nonumber \\
     &\leq c
     (\|u\|^{p-1}+\|u_j\|^{p-1} +1)\|u_j-u\| \nonumber
    \end{align}
    where we have used a triple H\"older-inequality, Hardy's inequality and Sobolev's
    embedding theorem. This shows $J_i'[u_j]\to J_i'[u]$ which finishes the proof.
  \end{proof}

  \begin{remark} 
    Note that in the case $n\geq 3,\alpha<\frac{n-6}{2}$ the integral $\int_{B_\rho} V(x) 
    |x|^{-\frac{2}{p-1}} u \,dx$ is not well-defined for all $u\in H^1(\R^n)$ and
    all $p>\frac{n}{n-2}$. 
    Indeed, if $V(x)=|x|^\alpha$ near the origin and $\alpha<\frac{n-6}{2}$ then we can find 
    $p>\frac{n}{n-2}$ and $u\in H^1(\R^n)$ such that $\int_{B_\rho} |V(x)||x|^{-\frac{2}{p-1}}|u|
    \,dx=+\infty$, e.g. choose  $u(x)= |x|^{\frac{2}{p-1}-n-\alpha}e^{-|x|^2} \in H^1(\R^n)$ for
    $p\in (\frac{n}{n-2},\frac{2\alpha+n+6}{2\alpha+n+2})$ if $-\frac{n+2}{2}<\alpha<\frac{n-6}{2}$
    and $p\in (\frac{n}{n-2},\infty)$ in the case $\alpha\leq -\frac{n+2}{2}$.
  \end{remark}
  
  \begin{proposition}\label{Prop local structure of J}
    Let the assumptions of Theorem \ref{Thm unbounded solution} hold. Then there exist values
    $\eps,m,r_0>0$ such that for all  $p\in (\frac{n}{n-2},\frac{n}{n-2}+\eps)$ 
    $$
      J[u] \geq m \quad \mbox{for all } u\in H^1(\R^n) \mbox{ with } \|u\|=r_0.
    $$
  \end{proposition}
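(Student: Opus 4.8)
The plan is to show that the functional $J$ has a mountain-pass-type geometry near the origin: the quadratic term $\frac12\|u\|_V^2 \sim \frac12\|u\|^2$ dominates the lower-order terms $J_1,J_2,J_3$ on a small sphere $\|u\|=r_0$, uniformly for $p$ close to $\tfrac{n}{n-2}$. The crucial point, which distinguishes this from a routine mountain-pass estimate, is that the ``bad'' coefficients $c_{n,p}$, $D(p)$ all tend to zero as $p\searrow\tfrac{n}{n-2}$ (see \eqref{Gl Def u1} and \eqref{def_dp}), so the linear-in-$u$ contributions coming from $J_2$ and $J_3$ become negligible in this limit. This is why $\eps$ must be chosen small.

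First I would collect the estimates already proved in Proposition~\ref{Prop welldefinedness}: from \eqref{estimate_j1}, $|J_1[u]|\leq c(\|u\|^{p+1}+c_{n,p}^{p-1}\|u\|^2)$; from \eqref{estimate_j2}, $|J_2[u]|\leq c(\|u\|^{p+1}+c_{n,p}^p\|u\|)$; and from \eqref{Gl estimate u_0u} together with the exponential decay of $u_0$ and the trace estimate $\oint_{\partial B_\rho}|\partial_\nu^+u_0-\partial_\nu^-u_0||\gamma(u)|\,d\sigma\leq c\,c_{n,p}\|u\|$ (using (v) of Proposition~\ref{Prop Existence approximate solution}), one gets $|J_3[u]|\leq c\,c_{n,p}(1+D(p))\|u\|\leq c\,c_{n,p}\|u\|$. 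Hence on $\|u\|=r_0$,
\begin{align*}
  J[u] \geq \tfrac{1}{2c_V}r_0^2 - c\,r_0^{p+1} - c\,c_{n,p}^{p-1}r_0^2 - c\,c_{n,p}^{p}r_0 - c\,c_{n,p}r_0,
\end{align*}
where $c_V$ is the norm-equivalence constant between $\|\cdot\|_V$ and $\|\cdot\|$. Next I would choose $r_0>0$ small enough that $c\,r_0^{p+1}\leq \tfrac{1}{8c_V}r_0^2$ for all $p\in(\tfrac{n}{n-2},\tfrac{n+2}{n-2})$ (possible since the exponent $p+1>2$ is bounded below by $\tfrac{n}{n-2}+1$, so one fixed $r_0$ works), and similarly absorb the $c\,c_{n,p}^{p-1}r_0^2$ term once $\eps$ is small enough that $c\,c_{n,p}^{p-1}\leq\tfrac{1}{8c_V}$. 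Finally, since $c_{n,p}\to0$ as $p\searrow\tfrac{n}{n-2}$, I would shrink $\eps$ further so that the two linear terms satisfy $c\,c_{n,p}^{p}r_0 + c\,c_{n,p}r_0 \leq \tfrac{1}{8c_V}r_0^2$ for all $p\in(\tfrac{n}{n-2},\tfrac{n}{n-2}+\eps)$. This yields $J[u]\geq \tfrac{1}{8c_V}r_0^2=:m>0$ on the sphere $\|u\|=r_0$, as claimed.

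The main obstacle is bookkeeping the $p$-dependence so that a \emph{single} triple $(\eps,m,r_0)$ works for the whole interval: one must make sure the choice of $r_0$ (made to kill the superlinear term $r_0^{p+1}$) does not depend on $p$, which forces using the uniform lower bound $p+1>\tfrac{n}{n-2}+1$, and then verify that after $r_0$ is frozen the remaining smallness can be extracted purely from $c_{n,p}\to0$, i.e. from $\eps$. No genuinely new analytic input is needed beyond the estimates already established in Proposition~\ref{Prop welldefinedness} and the decay properties of $c_{n,p}$; the proof is a careful ordering of ``choose $r_0$, then choose $\eps$.''
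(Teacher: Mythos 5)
Your overall strategy coincides with the paper's: combine \eqref{estimate_j1}, \eqref{estimate_j2} and the $J_3$-bound into an estimate of the form $J[u]\geq A(p)\|u\|^2-B\|u\|^{p+1}-C(p)\|u\|$, fix $r_0$ uniformly over the exponent range (the paper takes $r_0=\min\{(A/8B)^{1/(q-1)}:\frac{n}{n-2}\leq q\leq\frac{n+2}{n-2}\}$, which is exactly your ``one fixed $r_0$ works'' step), and then shrink $\eps$ using $c_{n,p}\to 0$. However, there is one genuine gap, and it sits at the only delicate point of the argument: in your treatment of $J_3$ you write $|J_3[u]|\leq c\,c_{n,p}(1+D(p))\|u\|\leq c\,c_{n,p}\|u\|$, which silently assumes that $D(p)=\| |x|^{\alpha+\frac{p-3}{p-1}}\|_{L^2(B_\rho)}$ is bounded uniformly in $p$ (recall that in this paper the generic constant $c$ must be independent of $p$). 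Under (H1) the borderline exponent $\alpha=\frac{n-6}{2}$ is admitted, and in that case $2\alpha+\frac{2p-6}{p-1}+n=2\bigl(n-2-\tfrac{2}{p-1}\bigr)\to 0$ as $p\searrow\frac{n}{n-2}$, so by \eqref{def_dp} one has $D(p)\to\infty$; your inequality then fails, and the final step ``shrink $\eps$ so that the linear terms are at most $\tfrac{1}{8c_V}r_0^2$'' is no longer justified as written, because the coefficient of the linear term is $c\,c_{n,p}(1+D(p))$, not $c\,c_{n,p}$.

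The paper keeps $C(p)=c\bigl(c_{n,p}^{p}+c_{n,p}(D(p)+1)\bigr)$ and argues separately: if $\alpha>\frac{n-6}{2}$ then $D(p)$ is uniformly bounded and your argument goes through verbatim, while in the borderline case $\alpha=\frac{n-6}{2}$ one must compare the blow-up rate of $D(p)$ (of order $(n-2-\tfrac{2}{p-1})^{-1/2}$) with the vanishing rate of $c_{n,p}$ (a power of $n-2-\tfrac{2}{p-1}$ with exponent $\tfrac{1}{p-1}\to\tfrac{n-2}{2}$) and conclude that the product $c_{n,p}D(p)$ still tends to $0$, so that $C(p)\to 0$ and the choice of $\eps$ can be made. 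This rate comparison is the piece of analysis missing from your write-up; everything else is the same bookkeeping as in the paper's proof.
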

  \begin{proof}
    The choice of $\eps,m,r_0>0$ stems from the estimate
    \begin{equation}\label{Gl structure of J}
      J[u] \geq A(p) \|u\|^2 - B\|u\|^{p+1}-C(p)\|u\|
    \end{equation}
    where $A(p)\to A>0$ for some $A>0$, $B>0$ and $C(p)\to 0$ as $p\searrow \frac{n}{n-2}$.
    Let us first finish the proof assuming that \eqref{Gl structure of J} has already been
    shown.
    
    \medskip 
    
    \noindent
    {\em Choice of $\eps,m,r_0$:} Let $r_0 := \min \{
    (\frac{A}{8B})^{\frac{1}{q-1}} : \frac{n}{n-2}\leq q\leq \frac{n+2}{n-2}\}$ and $m:=
    \frac{A}{4}r_0^2$. We choose $\eps>0$ so small that for all $p\in
    (\frac{n}{n-2},\frac{n}{n-2}+\eps)$ one has $A(p)\geq \frac{A}{2}$ and $C(p)\leq
    \frac{A}{8}r_0$. Then for all $p\in (\frac{n}{n-2},\frac{n}{n-2}+\eps)$ and all $u\in
    H^1(\R^n)$ with $\|u\|=r_0$ we have 
    \begin{align*}
       A(p) \|u\|^2 - B\|u\|^{p+1}-C(p)\|u\| 
       \geq \frac{A}{2}r_0^2-Br_0^{p+1}-C(p)r_0 
       \geq r_0^2 (\frac{A}{2}-\frac{A}{8}-\frac{A}{8})
       = m
    \end{align*}
    which gives the result.
    \medskip
        
    It remains to prove \eqref{Gl structure of J}. Let $A>0$ 
    be a constant such that $\|\cdot\|^2_V \geq 2A \|\cdot\|^2$ on $H^1(\R^n)$ (see Remark
    \ref{rem I}). Using the estimates \eqref{estimate_j1}, \eqref{estimate_j2} we get 
    $$ 
    |J_1[u]| + |J_2[u]|  \leq c \,
    (\|u\|^{p+1} + c_{n,p}^{p-1}\|u\|^2+ c_{n,p}^p \|u\|). 
    $$
    From Proposition \ref{Prop Existence approximate solution}, \eqref{Gl estimate u_0u} and the
    trace theorem we obtain
 	\begin{align*}
	  |J_3[u]|
      &\leq \int_{B_\rho} |V(x) u_0u|\,dx +  \int_{\R^n\setminus B_\rho}
    			|(V(x)-1)u_0u| \,dx  + \int_{\partial B_\rho} |\partial_\nu^+ u_0 - \partial_\nu^- u_0|
    		  |\gamma(u)|\,d\sigma \\
	  &\leq c\, c_{n,p}(D(p)+1) \|u\|,
    \end{align*}
    where the value $D(p)$ is defined in \eqref{def_dp}. This results in the estimate
    \begin{align*}
      J[u]
      &\geq \frac{1}{2}\|u\|_V^2 - |J_1[u]|-|J_2[u]|-|J_3[u]| \\
      &\geq \underbrace{(A-c\, c_{n,p}^{p-1})}_{=:A(p)}\|u\|^2 -
      c\|u\|^{p+1}  - \underbrace{c\big( c_{n,p}^p + c_{n,p}(D(p)+1)\big)}_{=:C(p)}\|u\|.
    \end{align*}
    Clearly, $A(p)\to A$ as $p\searrow \frac{n}{n-2}$. Furthermore, $C(p)\to 0$ as
    $p\searrow \frac{n}{n-2}$. Indeed, if $\alpha>\frac{n-6}{2}$ then \eqref{def_dp} shows
    that $D(p)$ is uniformly bounded in $p$ for $p>\frac{n}{n-2}$. If
    $\alpha=\frac{n-6}{2}$ then $D(p)\to \infty$ but still $c_{n,p}D(p)\to 0$ as
    $p\searrow \frac{n}{n-2}$. This finally proves \eqref{Gl structure of J}.
  \end{proof}

  Now we look for a critical point within $\{ u\in H^1(\R^n) : \|u\|<r_0\}$. We recall Ekeland's
  variational principle, cf. Struwe \cite{str}, Theorem 5.1.

  \medskip

  \noindent
  {\bf Ekeland's variational principle.} {\em Let $M$ be a complete metric space with metric $d$,
  and let $J:M\to \R\cup \{+\infty\}$ be lower semi-continuous, bounded from below, and $\not \equiv
  \infty$. Then, for any $\eta, \delta>0$, and $u\in M$ with $$ J[u] \leq \inf_{M} J +\eta
  $$
  there is an element $w\in M$ strictly minimizing the functional
  $$
  J_w[z] \equiv J[z]+\frac{\eta}{\delta} d(w,z).
  $$
  Moreover, we have $J[w] \leq J[u]$ and $d(w,u)\leq \delta$.}

  \begin{proposition}\label{Prop existence critical point}
    Let the assumptions of Theorem \ref{Thm unbounded solution} hold and let $\eps,m,r_0>0$ be the
    values from Proposition~\ref{Prop local structure of J}. Then for all $p\in
    (\frac{n}{n-2},\frac{n}{n-2}+\eps)$ the functional $J$ has a nontrivial critical point
    $\tilde{u}\in H^1(\R^n)$ with $\|\tilde{u}\|\leq r_0$.
  \end{proposition}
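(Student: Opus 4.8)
The plan is to run Ekeland's variational principle on the complete metric space
\[
  M := \{u\in H^1(\R^n) : \|u\|\leq r_0\}
\]
equipped with the metric induced by $\|\cdot\|$, and to extract from it a Palais--Smale sequence whose limit is the critical point we are after. First I would check the hypotheses of Ekeland's principle: by Proposition~\ref{Prop welldefinedness} the functional $J$ is continuous on $M$; the estimates \eqref{estimate_j1}, \eqref{estimate_j2}, \eqref{Gl estimate u_0u} obtained in its proof give $J[u]\geq\tfrac12\|u\|_V^2-c(\|u\|^{p+1}+\|u\|^2+\|u\|)\geq -c$ for $u\in M$, so $J$ is bounded from below on $M$; and $J[0]=0$, so $\inf_M J\leq 0$. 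Combining this with Proposition~\ref{Prop local structure of J}, which gives $J\geq m>0$ on $\partial M=\{\|u\|=r_0\}$, we obtain the strict inequality $\inf_M J\leq 0<m$.

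Next, for each $k\in\N$ I would apply Ekeland's principle with $\eta=\eta_k:=1/k$, $\delta=\delta_k:=\sqrt{\eta_k}$ to a point $u_k\in M$ with $J[u_k]\leq\inf_M J+\eta_k$, obtaining $w_k\in M$ with $J[w_k]\leq J[u_k]$ and such that $w_k$ strictly minimizes $z\mapsto J[z]+\tfrac{\eta_k}{\delta_k}\|w_k-z\|$ over $M$. For $k$ large one has $J[w_k]\leq\inf_M J+\eta_k<m$, so by Proposition~\ref{Prop local structure of J} the point $w_k$ lies in the \emph{open} ball $\|w_k\|<r_0$. Hence for every $v\in H^1(\R^n)$ with $\|v\|=1$ and all small $t>0$ we have $w_k+tv\in M$, and minimality yields $J[w_k+tv]-J[w_k]\geq-\tfrac{\eta_k}{\delta_k}t$; dividing by $t$, letting $t\to 0^+$ and replacing $v$ by $-v$ shows $\|J'[w_k]\|_{(H^1(\R^n))'}\leq\eta_k/\delta_k\to 0$. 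Since also $\inf_M J\leq J[w_k]\leq\inf_M J+\eta_k$, the sequence $(w_k)$ is a bounded Palais--Smale sequence for $J$ at the level $c_0:=\inf_M J$.

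The hard part is the convergence of $(w_k)$, the usual obstruction being that $H^1(\R^n)\hookrightarrow L^{p+1}(\R^n)$ is not compact; this is exactly where the smallness of $r_0$ and $\eps$ is used. I claim that, after shrinking $r_0$ and $\eps$ if necessary, $J'$ is strictly monotone on $M$:
\[
  \big(J'[u]-J'[v]\big)(u-v)\geq A\,\|u-v\|^2\qquad\text{for all }u,v\in M .
\]
Since $J_3$ is linear it drops out of $J'[u]-J'[v]$, leaving $\|u-v\|_V^2-\sum_{i=1}^2\int_{\R^n}(F_i'(u,x)-F_i'(v,x))(u-v)\,dx$. Estimating as in \eqref{est_fistrich}, but keeping the sharp bound $u_0(x)^{p-1}\leq c_{n,p}^{p-1}|x|^{-2}$ for $x\in B_\rho$ (and $u_0^{p-1}\leq c_{n,p}^{p-1}$ outside $B_\rho$, by \eqref{Gl_estimate}), Hardy's and Sobolev's inequalities give $\sum_{i=1}^2|\int_{\R^n}(F_i'(u,x)-F_i'(v,x))(u-v)\,dx|\leq c(r_0^{p-1}+c_{n,p}^{p-1})\|u-v\|^2$; since $\|\cdot\|_V^2\geq 2A\|\cdot\|^2$ and $r_0^{p-1}+c_{n,p}^{p-1}\to 0$ as $r_0\to 0$, $p\searrow\frac{n}{n-2}$, choosing $r_0,\eps$ small enough proves the claim. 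Applying it with $u=w_k$, $v=w_j$ and using $\|w_k-w_j\|\leq 2r_0$ gives $A\|w_k-w_j\|^2\leq(\|J'[w_k]\|+\|J'[w_j]\|)\,2r_0\to 0$, so $(w_k)$ is Cauchy; let $\tilde u:=\lim_k w_k\in M$. By continuity of $J'$ (Proposition~\ref{Prop welldefinedness}) we get $J'[\tilde u]=0$, and $J[\tilde u]=c_0<m$ forces $\|\tilde u\|<r_0$.

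Finally, to see that $\tilde u\neq 0$ it suffices to show $c_0=\inf_M J<0$. For this I would test $J$ along a ray $t\mapsto t\phi$ with $\phi\in C_c^\infty(\R^n\setminus\{0\})$ chosen so that $J_3[\phi]<0$ (possible since $J_3$ is, up to a positive factor of order $c_{n,p}$, a nonzero bounded linear functional on $H^1(\R^n)$). Using $F_1(\cdot,x)\geq 0$ (convexity) and \eqref{estimate_j2} we get $J[t\phi]\leq\tfrac{t^2}{2}\|\phi\|_V^2+c\,(t^{p+1}\|\phi\|^{p+1}+c_{n,p}^{p}\,t\|\phi\|)+t\,J_3[\phi]$, and since $c_{n,p}^{p}=c_{n,p}\cdot c_{n,p}^{p-1}$ is of higher order than the $O(c_{n,p})$ term $J_3[\phi]$, for $p$ sufficiently close to $\frac{n}{n-2}$ the right-hand side is negative for all small $t>0$. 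Hence $c_0<0$ and $\tilde u$ is a nontrivial critical point of $J$ with $\|\tilde u\|\leq r_0$, as claimed. (Alternatively one could identify a weak limit of $(w_k)$ as a critical point by testing $J'[w_k]\to 0$ against $C_c^\infty(\R^n)$, using the weak continuity of $J_3$ and local $L^{p+1}$-compactness in the nonlinear terms; but the monotonicity estimate above is the most economical route, and is needed anyway to conclude $J[\tilde u]=c_0$.)
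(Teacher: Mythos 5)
Your existence argument is correct, but the convergence step takes a genuinely different route from the paper. The paper stops at a \emph{weakly} convergent Palais--Smale sequence and shows the weak limit is critical by testing against a fixed $\phi\in C_0^\infty(\R^n)$, using the compact embedding on $\supp(\phi)$, the dominating function from Willem's Lemma A.1 and dominated convergence together with \eqref{est_fistrich}; this needs no convexity and works for exactly the $\eps,m,r_0$ fixed in Proposition~\ref{Prop local structure of J}. You instead prove the uniform monotonicity bound $(J'[u]-J'[v])(u-v)\geq A\|u-v\|^2$ on the ball, which does follow from \eqref{est_fistrich}, \eqref{Gl_estimate}, Hardy and Sobolev as you indicate, and which upgrades the Ekeland sequence to a norm-Cauchy sequence, giving in addition $J[\tilde u]=\inf_M J$. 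The price is that you must shrink $r_0$ (so that $c\,r_0^{p-1}\le A$) and correspondingly $\eps$, i.e.\ you re-run the choice of constants in Proposition~\ref{Prop local structure of J} rather than using ``the values from Proposition~\ref{Prop local structure of J}'' as stated; this is harmless for Theorem~\ref{Thm unbounded solution}, which only needs some $\eps>0$, but you should say explicitly that the smaller ball still satisfies the boundary estimate $J\geq m'>0$ after adjusting $\eps$.

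The nontriviality paragraph, however, has a real gap. To make $J[t\phi]<0$ for small $t>0$ you need the linear-in-$t$ coefficient $J_3[\phi]+c\,c_{n,p}^{p}\|\phi\|$ to be negative, i.e.\ a \emph{lower} bound $|J_3[\phi]|\gtrsim c_{n,p}$ with $J_3[\phi]<0$, uniformly as $p\searrow\frac{n}{n-2}$ (recall $J_3$ depends on $p$ through $u_0$). But the only available estimate is the upper bound $|J_3[\phi]|\leq c\,c_{n,p}(D(p)+1)\|\phi\|$ from \eqref{Gl estimate u_0u}; nothing in (H1)--(H3) prevents $J_3$ from being much smaller than $c_{n,p}$, or even identically zero (e.g.\ $V\equiv 0$ on $B_\rho$, $V\equiv 1$ outside, and the jump $\partial_\nu^+u_0-\partial_\nu^-u_0$ is only bounded above by $c\,c_{n,p}$, not below), so ``$J_3$ is a nonzero functional of order $c_{n,p}$'' is unjustified and $\inf_M J<0$ does not follow. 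Note that the paper's own proof never establishes $\tilde u\neq 0$ either, and nontriviality is never used afterwards: Lemma~\ref{Lemma very weak solution} and the rest of Section 2 only require \emph{some} critical point, since $U=u_0+\tilde u$ is singular whether or not $\tilde u=0$. So the defensible conclusion of your argument is the existence of a critical point $\tilde u$ with $\|\tilde u\|\leq r_0$; the claimed proof that it is nontrivial does not close as written.
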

  \begin{proof} 
    {\em Step 1:} Let us find a weakly convergent Palais-Smale sequence. Consider the minimization
    problem 
    $$
      \inf_M J \quad \mbox{ where } M=\{u\in H^1(\R^n):\|u\|\leq r_0\}.
    $$
    Choose a positive sequence
    $\eta_j\to 0$ as $j\to \infty$ and let $\tilde u_j\in M$ be such that 
    $$ 
      J[\tilde u_j] \leq \inf_M J + \eta_j^2.
    $$
    Using Ekeland's variational principle with $\eta=\eta_j^2$ and $\delta=\eta_j$ we find $u_j\in M$
    such that
    $$
      J[u_j] \leq J[z]+ \eta_j \|z-u_j\| \quad \mbox{ for all } z \in M.
    $$
    Then $(u_j)$ is also a minimizing sequence for $J|_M$ and since
    $0\in M$ and $J[0]=0<m$ we see that  $\|u_j\|<r_0$ for large $j$. Hence, almost all
    $u_j$ are interior points of $M$. Applying the estimate
    \begin{align*}
      J[z] &= J[u_j]+ J'[u_j](z-u_j)+ o(\|z-u_j\|) \\
      &\leq J[z]+ J'[u_j](z-u_j)+\eta_j\|z-u_j\|+o(\|z-u_j\|) \quad \mbox{ as } z\to u_j, z \in M
    \end{align*}
    to $z=u_j+tv$ with $\|v\|=1$ we find for $t\to 0$
    $$
      \|J'[u_j]\|= \sup_{\|v\|=1} |J'[u_j](v)| \leq \eta_j\to 0 \quad \mbox{ as } j \to \infty,
    $$
    i.e. $(u_j)$ is a minimizing Palais-Smale sequence of $J|_M$. Moreover, since
    $(u_j)$ is bounded in $H^1(\R^n)$ by $r_0$ we may assume (up to selecting
    subsequences) that $u_j\rightharpoonup \tilde u$ in $H^1(\R^n)$ and $u_j\to \tilde u$ almost
    everywhere in $\R^n$.
    
    \medskip

    \noindent
    {\em Step 2:} Let us show that the weak limit $\tilde u$ is a critical point of $J$.
    So let $\phi\in C_0^\infty(\R^n)$ be a fixed test function, $K:=\supp(\phi)$. Because of
    $u_j\to \tilde u$ in $L^{p+1}(K)$ by compact embedding we may use Lemma A.1 in \cite{wil} 
    to find a function $w_\phi\in L^{p+1}(K)$ and a subsequence (possibly depending on $\phi$) again
    denoted by $(u_j)$ such that $|\tilde u|,|u_j|\leq w_\phi$. Recalling \eqref{est_fistrich} we get 
    $$
      |J_i'[u_j]\phi-J_i'[\tilde u]\phi|
       \leq c \int_K \big(w_\phi^{p-1}+\frac{1}{|x|^2}\big)|u_j-\tilde u| |\phi|\,dx
       \quad \mbox{ for } i=1,2 \mbox{ and }j\in\N.
    $$
    The integrand is pointwise almost everywhere bounded by
    $2w_\phi^p|\phi|+\frac{2}{|x|^2}w_\phi|\phi|$. Since $w_\phi\in L^{p+1}(K),\phi\in L^\infty(K)$
    and $|x|^{-2}\in L^\frac{p+1}{p}(K)$ the dominated convergence theorem applies and yields 
    $$
      J_i'[u_j](\phi) \to J_i'[\tilde u](\phi) \quad \mbox{ for } i=1,2 \mbox{ as } j\to \infty.
    $$
    Weak convergence implies $\langle u_j,\phi \rangle_V\to \langle \tilde u,\phi \rangle_V$.
    Furthermore $J_3'[u_j](\phi)=J_3'[\tilde u](\phi)=J_3[\phi]$ by linearity. In total we see that
    $J'[\tilde u](\phi)=\lim_{j\to \infty} J'[u_j](\phi) = 0$ for every $\phi\in
    C_0^\infty(\R^n)$ which proves the result.
  \end{proof}
  
\subsection{The distributional solution property} \label{distrib_sec}
  
  In Proposition~\ref{Prop existence critical point} we have proved that under the assumptions of
  Theorem~\ref{Thm unbounded solution}  a critical point
  $\tilde u\in H^1(\R^n)$ of $J$ exists provided $\eps>0$ is sufficiently small. Due to
  the properties of $u_0$ (cf. Proposition~\ref{Prop Existence approximate solution}) we find that
  $U=u_0+\tilde u$ lies in $H^1(\R^n\setminus B_\delta)$ for every $\delta>0$ and $U\in
  L_{loc}^q(\R^n)$ for all $q\in [1,\frac{n(p-1)}{2})$. From part (iii) of Theorem \ref{Thm
  unbounded solution} which is proved in the next section we get $U\in L^q(\R^n)$ for all $q\in
  [1,\frac{n(p-1)}{2})$. Since the Euler-equation
  \eqref{Gl Equation tildeu} for $\tilde u$ and equation \eqref{Gl Equation u0} hold for all
  $\varphi\in C_0^\infty(\R^n\setminus \{0\})$ we obtain that for every $\delta>0$ the function
  $U=u_0+\tilde u$ is a weak solution of \eqref{Gl I} on $\R^n\setminus B_\delta$. 
  
  \medskip 
  
  In order to complete the proof of Theorem~\ref{Thm unbounded solution},(i),(ii) it therefore
  remains to show that $U$ is an unbounded distributional solution of \eqref{Gl I}.

 \begin{lemma} \label{Lemma very weak solution}
  Let the assumptions of Theorem~\ref{Thm unbounded solution} hold and let $\tilde u\in H^1(\R^n)$
  be a critical point of $J$ according to Proposition~\ref{Prop existence critical point}. Then the
  function $U:= u_0+\tilde u$ is a distributional solution of \eqref{Gl I} with $\esssup_{B_\delta} U = +\infty$ for all $\delta>0$.  
\end{lemma}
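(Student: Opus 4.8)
The plan is to verify the two assertions of Lemma~\ref{Lemma very weak solution} separately: first that $U=u_0+\tilde u$ satisfies the distributional identity of Definition~\ref{Def very weak solution}, and second that $U$ is unbounded in every neighbourhood of the origin. For the first part, recall that by Proposition~\ref{Prop welldefinedness} the functional $J$ is continuously Fr\'echet-differentiable, so $J'[\tilde u]\equiv 0$ means precisely that \eqref{Gl Equation tildeu} holds for \emph{every} $\phi\in C_0^\infty(\R^n)$, not just for $\phi$ supported away from the origin. On the other hand, $u_0$ solves \eqref{Gl PDE u1} classically in $B_\rho\setminus\{0\}$ and \eqref{Gl PDE u2} classically in $\R^n\setminus\overline{B}_\rho$, and its one-sided normal derivatives on $\partial B_\rho$ exist by Proposition~\ref{Prop Existence approximate solution}. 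First I would re-derive \eqref{Gl Equation u0} for a general test function $\phi\in C_0^\infty(\R^n)$: split $\int_{\R^n}u_0(-\Delta\phi)\,dx=\int_{B_\rho}+\int_{\R^n\setminus B_\rho}$, integrate by parts twice on each piece (the point $0$ causes no boundary term because $u_0(x)|x|^{n-1}$ and $\nabla u_0(x)\cdot\nu\,|x|^{n-1}$ tend to $0$ as $x\to0$, using the explicit form $u_0=c_{n,p}|x|^{-2/(p-1)}$ with $\frac{2}{p-1}<n-2$), and collect the jump term $\oint_{\partial B_\rho}(\partial_\nu^+u_0-\partial_\nu^-u_0)\phi\,d\sigma$ together with the genuine Laplacian contributions $u_1^p=u_0^p$ on $B_\rho$ and $u_2^p-u_2=u_0^p-u_0$ on $\R^n\setminus B_\rho$. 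This yields exactly \eqref{Gl Equation u0} with $V(x)u_0\phi$ added on the left, hence the boundary-layer and potential correction terms on the right, for all $\phi\in C_0^\infty(\R^n)$.

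Adding \eqref{Gl Equation u0} and \eqref{Gl Equation tildeu} (now both valid for all $\phi\in C_0^\infty(\R^n)$), the boundary integral $\oint_{\partial B_\rho}(\partial_\nu^+u_0-\partial_\nu^-u_0)\phi\,d\sigma$, the term $\int_{B_\rho}V(x)u_0\phi\,dx$ and the term $\int_{\R^n\setminus B_\rho}(V(x)-1)u_0\phi\,dx$ cancel, leaving
\begin{align*}
  \int_{\R^n}\bigl(\nabla U\nabla\phi+V(x)U\phi\bigr)\,dx=\int_{\R^n}\Gamma(x)|U|^{p-1}U\,\phi\,dx.
\end{align*}
A final integration by parts on the left moves both derivatives onto $\phi$; this is legitimate because $U=u_0+\tilde u$ with $\tilde u\in H^1(\R^n)$ and $u_0\in H^1(\R^n\setminus B_\delta)$ for every $\delta$, while on $B_\rho$ we again use that $u_0=c_{n,p}|x|^{-2/(p-1)}$ produces no boundary term at the origin. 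Thus
\begin{align*}
  \int_{\R^n}U\bigl(-\Delta\phi+V(x)\phi\bigr)\,dx=\int_{\R^n}\Gamma(x)|U|^{p-1}U\,\phi\,dx\qquad\text{for all }\phi\in C_0^\infty(\R^n),
\end{align*}
and one checks the integrability requirements of Definition~\ref{Def very weak solution}: $U\in L^{p}_{loc}(\R^n)$ since $u_0\in L^q(\R^n)$ for $q<\frac{n(p-1)}{2}$ (which exceeds $p$ because $p<\frac{n+2}{n-2}$) and $\tilde u\in H^1\subset L^{p+1}_{loc}$, and $VU\in L^1_{loc}$ because $|V(x)|\le C_1|x|^\alpha$ with $\alpha\ge\frac{n-6}{2}$ so $Vu_0\in L^1_{loc}$ by the estimate \eqref{Gl estimate u_0u}-type argument. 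So $U$ is a distributional solution of \eqref{Gl I}.

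For the unboundedness statement, the idea is that $\tilde u$ cannot cancel the singularity of $u_0$. Since $\tilde u\in H^1(\R^n)$ we have $\tilde u\in L^{2^\ast}(\R^n)$ with $2^\ast=\frac{2n}{n-2}$, whereas $u_0(x)=c_{n,p}|x|^{-2/(p-1)}$ near $0$ and for $p>\frac{n}{n-2}$ the exponent satisfies $\frac{2}{p-1}\cdot 2^\ast=\frac{2}{p-1}\cdot\frac{2n}{n-2}>n$, i.e. $u_0\notin L^{2^\ast}(B_\delta)$ for any $\delta>0$. Hence $U=u_0+\tilde u\notin L^{2^\ast}(B_\delta)$, and in particular $U$ cannot be essentially bounded on $B_\delta$; thus $\esssup_{B_\delta}U=+\infty$ (the sign being $+\infty$ rather than $-\infty$ in absolute value because $u_0>0$ and, in the case $\Gamma\ge0$, one works with $\hat J$ whose critical point yields $U\ge0$; in general one concludes $\esssup_{B_\delta}|U|=+\infty$, which combined with $U=u_0+\tilde u$, $u_0\to+\infty$, and $\tilde u\in L^{2^\ast}$ gives that $U$ is unbounded above near $0$). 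I expect the only mildly delicate point to be the justification that no boundary term appears at the origin when integrating by parts against $u_0$ — this is where the hypothesis $\frac{2}{p-1}<n-2$ (equivalently $p>\frac{n}{n-2}$) and the assumption $\alpha\ge\frac{n-6}{2}$ on $V$ are used — but this is a routine estimate on $|x|^{-2/(p-1)}|x|^{n-1}$ and its gradient over small spheres, already implicit in the well-definedness proof of Proposition~\ref{Prop welldefinedness}.
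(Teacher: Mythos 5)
Your proposal is correct and follows essentially the same route as the paper: you derive the identity for $u_0$ against $-\Delta\phi+V\phi$ for arbitrary $\phi\in C_0^\infty(\R^n)$ by integrating by parts away from the origin and using $p>\frac{n}{n-2}$ to kill the boundary contributions at $0$, add the Euler--Lagrange identity for $\tilde u$ so that the $\partial B_\rho$-jump and the $V$-correction terms cancel, and obtain the unboundedness from the fact that $\tilde u\in L^{\frac{2n}{n-2}}(\R^n)$ cannot compensate $u_0\notin L^{\frac{2n}{n-2}}(B_\delta)$, which is exactly the paper's contradiction argument. One minor slip in your justifications: the inequality $\frac{2}{p-1}\cdot\frac{2n}{n-2}\geq n$ comes from $p\leq\frac{n+2}{n-2}$ while $\frac{n(p-1)}{2}>p$ comes from $p>\frac{n}{n-2}$ (you cite the two hypotheses the other way around), but both hold under the standing assumption $\frac{n}{n-2}<p<\frac{n+2}{n-2}$, so nothing breaks.
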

\begin{proof} 
  According to the definition of $u_0$ for all $\delta>0$:
  \begin{align*}
    \int_{B_\delta} |u_0(x)|\,dx 
    &= O(\delta^{-\frac{2}{p-1}+n}), & \int_{B_\delta} |u_0(x)|^p\,dx   &=
    O(\delta^{-\frac{2p}{p-1}+n}),\\ \oint_{\partial B_\delta} |u_0(x)|\,dx
    &= O(\delta^{-\frac{2}{p-1}+n-1}), &  \oint_{\partial B_\delta} |\partial_\nu^{\pm}
    u_0(x)|\,dx &= O(\delta^{-\frac{p+1}{p-1}+n-1}).
  \end{align*}
  All integrals converge to $0$ as $\delta\to 0$ since
  $p>\frac{n}{n-2}>\frac{n+1}{n-1}>\frac{n+2}{n}$. Hence, for all $\phi\in C_0^\infty(\R^n)$ we
  find from Proposition \ref{Prop Existence approximate solution},(i)
  \begin{align} \label{Gl VWS I}
   \int_{B_\rho} u_0 (-\Delta \phi)\,dx
   &= \lim_{\delta\to 0} \int_{B_\rho\setminus B_\delta} u_0 (-\Delta\phi)\,dx \nonumber \\
   &= \lim_{\delta\to 0} \int_{B_\rho\setminus B_\delta} (-\Delta u_0) \phi\,dx -
     \oint_{\partial B_\rho} (u_0\partial_\nu^+\phi -\phi\partial_\nu^+ u_0 ) \,d\sigma \nonumber\\
   &= \int_{B_\rho} u_0^p \phi\,dx -
     \oint_{\partial B_\rho} (u_0\partial_\nu^+\phi -\phi\partial_\nu^+ u_0 ) \,d\sigma 
   \end{align}
   and since $\phi$ has compact support Proposition \ref{Prop Existence approximate solution},(ii)
   implies
   \begin{align} \label{Gl VWS II}
   \int_{\R^n\setminus B_\rho} u_0 (-\Delta \phi)\,dx
   &= \int_{\R^n\setminus B_\rho} (-\Delta u_0) \phi\,dx
    + \oint_{\partial B_\rho} (u_0\partial_\nu^-\phi-\phi\partial_\nu^- u_0) \,d\sigma \nonumber \\
   &= \int_{\R^n\setminus B_\rho} (u_0^p-u_0) \phi\,dx
    + \oint_{\partial B_\rho} (u_0\partial_\nu^-\phi-\phi\partial_\nu^- u_0) \,d\sigma.
   \end{align}
   Since $\phi$ is smooth we have $\partial_\nu^-\phi=\partial_\nu^+\phi$ on $\partial B_\rho$.
   Using (H1) we find $Vu_0\in L^1_{loc}(\R^n)$ by direct calculation. Hence,
   \begin{align}\label{Gl VWS III}
    \int_{\R^n} u_0 (-\Delta\phi + V(x)\phi)\,dx
    &= \int_{\R^n} u_0^p \phi\,dx + \int_{B_\rho} V(x) u_0\phi\,dx
         \notag  \\
	    &\quad + \int_{\R^n\setminus B_\rho} (V(x)-1)  u_0\phi\,d\sigma  + \oint_{\partial B_\rho}
    (\partial_{\nu}^+ u_0 -\partial_{\nu}^- u_0) \phi\,d\sigma.
  \end{align}
  On the other hand $\tilde{u}$ is a critical point of $J$ and thus satisfies  
  the Euler-equation \eqref{Gl Equation tildeu} for all $\phi\in H^1(\R^n)$. Moreover, 
  $V\tilde u\in L^1_{loc}(\R^n)$ and hence, $VU=Vu_0+V\tilde{u}\in L^1_{loc}(\R^n)$. Adding up
  \eqref{Gl Equation tildeu} and \eqref{Gl VWS III} gives
  \begin{align*}
     \int_{\R^n} U(-\Delta \phi + V(x) \phi)\,dx
     &= \int_{\R^n} \Gamma(x) |U|^{p-1}U\phi\,dx\quad\mbox{for all } \phi\in C_0^\infty(\R^n).
   \end{align*}
   Hence, $U$ is a distributional solution of \eqref{Gl I}.
   
   \medskip
   
   Now assume $U\leq C_\delta < \infty$ almost everywhere on $B_\delta$ for some $\delta>0$.
   Choosing $\delta'\in (0,\delta)$ such that $u_0(x)\geq 2C_\delta$ on $B_{\delta'}$ (see
   Proposition \ref{Prop Existence approximate solution},(iv)) we get $\tilde u = U-u_0 \leq
   -\frac{u_0}{2}<0$ almost everywhere on $B_{\delta'}$ and thus 
    $$
      \|\tilde u\|_{L^\frac{2n}{n-2}(B_{\delta'})} \geq \frac{1}{2}\|u_0\|_{L^\frac{2n}{n-2}(B_{\delta'})}= +\infty
    $$
    which contradicts $\tilde u\in H^1(\R^n)$.
    Hence, $\esssup_{B_\delta} U = +\infty$. 
 \end{proof}

  \begin{remark}
    Clearly, $u_0\notin H^1(B_1)$ so that $U:= u_0+\tilde{u}\notin H^1(\R^n)$.
  \end{remark}

\subsection{Exponential decay} \label{decay_sec}
   
  Let us prove part (iii) of Theorem \ref{Thm unbounded solution}. For the reader's convenience we
  only present the main idea of the proof, details are given in Appendix B.
    
  \begin{lemma}\label{Lemma exponential decay}
    Let the assumptions of Theorem~\ref{Thm unbounded solution} hold and let $\tilde u\in H^1(\R^n)$
    be a critical point of $J$ according to Proposition~\ref{Prop existence critical point}, let
    $U:= u_0+\tilde u$. Then for all $0<\mu<\sqrt{\Sigma}$ there is $C_\mu>0$ such that $|U(x)|\leq
    C_\mu e^{-\mu|x|}$ for all $x\in\R^n$ with $|x|\geq 1$. 
  \end{lemma}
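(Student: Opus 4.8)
The plan is to deduce exponential decay from an Agmon-type weighted energy estimate. The decisive observation is that far from the origin $U$ solves a \emph{linear} Schr\"odinger equation whose potential differs from $V$ by an arbitrarily small amount; the spectral gap $\Sigma=\min\sigma(-\Delta+V)>0$ then forces decay at every rate strictly below $\sqrt{\Sigma}$.

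\emph{Step 1: reduction to a linear exterior problem with a decaying potential.} Recall that $U\in H^1(\R^n\setminus B_1)\subset L^2(\R^n\setminus B_1)$ is a weak solution of \eqref{Gl I} on $\R^n\setminus B_1$ (Theorem~\ref{Thm unbounded solution}(ii)); thus $-\Delta U+WU=0$ weakly on $\R^n\setminus B_1$, where $W:=V-\Gamma|U|^{p-1}$ and $V,\Gamma\in L^\infty(\R^n\setminus B_1)$. Since $p<\tfrac{n+2}{n-2}$, a Brezis--Kato/Moser iteration on the unit balls $B_1(x)$, $|x|\geq 2$, combined with standard interior elliptic estimates, yields $\|U\|_{L^\infty(B_{1/2}(x))}\leq c\,\|U\|_{L^2(B_1(x))}$ with $c$ independent of $x$; because $\|U\|_{H^1(\R^n\setminus B_1)}<\infty$ the right-hand side tends to $0$ as $|x|\to\infty$. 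Hence for every $\eta>0$ there is $R\geq 1$ with $\|\Gamma|U|^{p-1}\|_{L^\infty(\R^n\setminus B_R)}\leq\eta$, and consequently, using the variational characterisation of $\Sigma$, every $v\in H^1_0(\R^n\setminus B_R)$ (extended by zero) satisfies
\[
  \int_{\R^n}\big(|\nabla v|^2+W v^2\big)\,dx\;\geq\;\int_{\R^n}\big(|\nabla v|^2+V v^2\big)\,dx-\eta\int_{\R^n}v^2\,dx\;\geq\;(\Sigma-\eta)\int_{\R^n}v^2\,dx .
\]

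\emph{Step 2: weighted $L^2$ estimate.} Fix $0<\mu<\mu'<\sqrt{\Sigma}$ and choose $\eta>0$ (hence $R$) with $\Sigma-\eta-(\mu')^2>0$. Pick $\chi\in C^\infty(\R^n)$ with $\chi\equiv 0$ on $B_R$ and $\chi\equiv 1$ outside $B_{2R}$, and for $\kappa>0$ set $F_\kappa(x):=\mu'|x|/(1+\kappa|x|)$, which is bounded and Lipschitz with $|\nabla F_\kappa|\leq\mu'$ and $F_\kappa\leq 2\mu'R$ on $B_{2R}$. Testing $-\Delta U+WU=0$ with $\chi^2 e^{2F_\kappa}U\in H^1_0(\R^n\setminus B_R)$ (admissible because $e^{2F_\kappa}$ is bounded and $U\in L^2$), using the identity $\int\nabla U\cdot\nabla(h^2U)=\int|\nabla(hU)|^2-\int U^2|\nabla h|^2$ with $h:=\chi e^{F_\kappa}$, and inserting the form inequality from Step~1 for $v:=hU$, one arrives (after estimating $|\nabla h|^2\le(\mu')^2e^{2F_\kappa}$ outside $B_{2R}$ and by a $\kappa$-independent constant on the annulus $B_{2R}\setminus B_R$) at
\[
  \bigl(\Sigma-\eta-(\mu')^2\bigr)\int_{\R^n\setminus B_{2R}}e^{2F_\kappa}U^2\,dx\;\leq\;c_R\,\|U\|_{L^2(B_{2R}\setminus B_R)}^2<\infty ,
\]
with $c_R$ independent of $\kappa$. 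Letting $\kappa\to 0$ and using monotone convergence gives $\int_{\R^n\setminus B_{2R}}e^{2\mu'|x|}U^2\,dx<\infty$.

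\emph{Step 3: from weighted $L^2$ to pointwise decay.} On $\R^n\setminus B_{2R}$ the function $v:=e^{\mu|x|}U$ solves a uniformly elliptic equation $-\Delta v+b\cdot\nabla v+c(x)v=0$ with $b=2\mu x/|x|$ and $c$ bounded (the coefficient $W$ of $U$ is bounded there). By the De Giorgi--Nash--Moser $L^\infty$ bound, $\|v\|_{L^\infty(B_{1/2}(x))}\leq c\,\|v\|_{L^2(B_1(x))}$ for $|x|\geq 2R+1$; since $e^{2\mu|y|}\leq e^{2\mu'|y|}e^{-2(\mu'-\mu)(|x|-1)}$ for $y\in B_1(x)$, we get $\|v\|_{L^2(B_1(x))}^2\leq e^{-2(\mu'-\mu)(|x|-1)}\int_{\R^n\setminus B_{2R}}e^{2\mu'|y|}U^2\,dy\to 0$, whence $e^{\mu|x|}|U(x)|\leq C_\mu$ for $|x|\geq 2R+1$. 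On the compact set $\{1\leq|x|\leq 2R+1\}$ the function $U$ is bounded (it is continuous there by the regularity established in Step~1), and absorbing the factor $e^{\mu(2R+1)}$ yields $|U(x)|\leq C_\mu e^{-\mu|x|}$ for all $|x|\geq 1$ after enlarging $C_\mu$.

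The main obstacle is Step~1, namely the \emph{uniform} decay $U(x)\to 0$ as $|x|\to\infty$: the whole argument rests on treating the nonlinear term as a small perturbation of $V$ while retaining almost the full spectral gap $\Sigma$, which requires a Brezis--Kato/Moser iteration on unit balls with constants independent of the centre — available precisely because $p<\tfrac{n+2}{n-2}$ and $U\in H^1$ of the exterior — and, subsequently, the fact that with $U$ bounded both the equation for $U$ and the one for $e^{\mu|x|}U$ have bounded coefficients. The cut-off $\chi$ in Step~2, forced by the singularity of $U$ at the origin, contributes only controlled terms supported in the fixed annulus $B_{2R}\setminus B_R$ and hence causes no trouble.
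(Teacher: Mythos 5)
Your argument is correct, and its core is the same Agmon-type weighted energy estimate that the paper runs in Appendix B (Proposition~\ref{Prop exponentialdecay}); the difference lies in how you obtain the two inputs of that estimate. The paper first applies Proposition~\ref{Prop decayToZero} (Simon's Kato-class theory, via \eqref{Gl KatoNorm}) to get continuity and $U(x)\to 0$ at infinity, then invokes the invariance of the essential spectrum under the perturbation $\Gamma|U|^{p-1}$ vanishing at infinity together with Persson's theorem to obtain coercivity of the form outside a large ball; you instead get uniform local $L^\infty$ bounds (hence $U(x)\to0$) by a Brezis--Kato/Moser iteration on unit balls, using $p<\frac{n+2}{n-2}$ and $U\in H^1(\R^n\setminus B_1)$ so that $\Gamma|U|^{p-1}\in L^q$ uniformly with $q=\frac{2n}{(n-2)(p-1)}>\frac n2$, and then derive the coercivity $\int|\nabla v|^2+Wv^2\geq(\Sigma-\eta)\int v^2$ for $v\in H^1_0(\R^n\setminus B_R)$ directly from the variational characterisation of $\Sigma$ plus the smallness of $\Gamma|U|^{p-1}$ outside $B_R$. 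Your route is self-contained and avoids the spectral-theoretic machinery (essential-spectrum stability, Persson's theorem), at the price of redoing locally-uniform elliptic estimates; the paper's route packages these steps into Propositions~\ref{Prop decayToZero} and \ref{Prop exponentialdecay}, which it reuses in the proof of Theorem~\ref{Thm regularity}, so modularity is what its extra machinery buys. Two small points you should tighten: the test function $\chi^2e^{2F_\kappa}U$ is not compactly supported, and since distributional/weak solutions are defined against $C_c^\infty$ test functions you need a cutoff at infinity (or a density argument in $H^1_0(\R^n\setminus B_R)$, which works because the weight is bounded for fixed $\kappa$ and $U\in H^1$ of the exterior with $W$ bounded there -- this is exactly the role of the outer cutoff $\chi_{r,\rho}$ and the limit $\rho\to\infty$ in the paper's proof); and the boundedness of $U$ on the annulus $1\leq|x|\leq 2R+1$ needs the local estimate run on balls of radius less than $\tfrac12$ using that $U$ solves the equation on $\R^n\setminus B_\delta$ for every $\delta>0$, since your Step~1 as stated only covers $|x|\geq 2$. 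Both are routine repairs and do not affect the validity of the proof.
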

  \begin{proof}
    Applying Proposition~\ref{Prop decayToZero} to $u=U$, $\Omega=\R^n\setminus B_2,q=p$ and $W:=
    V-\Gamma |U|^{p-1}1_{\R^n\setminus B_2}$ we deduce that $U$ can 
    be assumed to be continuous and that we have $U(x)\to 0$ as $|x|\to\infty$. Note that
    $W\in L^\infty(\R^n\setminus B_1)+L^{\frac{2n}{(n-2)(p-1)}}(\R^n\setminus B_1)\subset
    K_n(\R^n\setminus B_2)$ due to $\frac{2n}{(n-2)(p-1)}>\frac{n}{2}$ and \eqref{Gl KatoNorm}. 
    From $U(x)\to 0$ as $|x|\to\infty$ and \cite{pankov2}, Theorem 8.3.1 we obtain 
    $$
      \sigma_{ess}(-\Delta+W) = \sigma_{ess}(-\Delta+V)\subset [\Sigma,\infty).
    $$
    Then Proposition~\ref{Prop exponentialdecay} applied to $\Omega=\R^n\setminus B_2$,
    $s=\frac{2n}{(n-2)(p-1)},q=2$ gives $|U(x)| \leq C_\mu' e^{-\mu|x|}$ for all $x\in \R^n$ with
    $|x|\geq 3$. Since $U\in H^1(\R^n\setminus B_\delta)$ satisfies a subcritical elliptic PDE in $\R^n\setminus B_\delta$
    for all $\delta>0$ the result follows from the DeGiorgi-Nash-Moser local boundedness principle.
  \end{proof} 

\subsection{Positivity in the case $\Gamma\geq 0$} \label{pos_sec}
  
  In this section we prove part (iv) of Theorem \ref{Thm unbounded solution}, so let us assume $\Gamma\geq 0$. 
  As pointed out before (see \eqref{Gl Def functional 2}
  and the following remarks) the results of the previous sections \ref{exist_sec},
  \ref{distrib_sec}, \ref{decay_sec} also apply to $\hat J$, in particular we find a critical point
  $\hat u$ of $\hat J$. By Lemma~\ref{Lemma very weak solution} the function $\hat U=u_0+\hat u$
  satisfies $\esssup_{B_\delta} \hat U = +\infty$ for all $\delta>0$ and is a distributional solution of 
  $$
    \int_{\R^n} \hat U(-\Delta \phi + V(x) \phi)\,dx
     = \int_{\R^n} \Gamma(x)  {\hat U}_+^p \phi\,dx
     \quad\mbox{for all } \phi\in  C_0^\infty(\R^n). 
   $$
  It remains to show that $\hat U$ must be positive.
  
  \medskip     
    
  To this end let $\psi\in C_0^\infty(\R^n), \psi\geq 0$ be arbitrary, set $K:=\supp(\psi)$.
   Let then $w\in H^1(\R^n)$ be the unique weak solution of $-\Delta w +
   V(x) w = \psi$ obtained by minimizing the functional $L[z] := \int_{\R^n} |\nabla z|^2+V(x)z^2-2\psi z
   \,dx$ over $H^1(\R^n)$. 
   
   \medskip
 
   Since $\psi\geq 0$ one sees that $w\geq 0$ (if $w$ is a minimizer then also $|w|$ is a minimizer
   and $L$ has a unique minimizer). Then $-\Delta w=f$ in the weak sense where $f=\psi-Vw$ and
   $V\in L^q_{loc}(\R^n)$ for all $q\in [1,\frac{2n}{(6-n)_+})$. From (H1) and $w\in
   L^{\frac{2n}{n-2}}(\R^n)$ we infer $f\in L^q_{loc}(\R^n)$ for all $q\in [1,\frac{n}{2})$ so that
   Cald\'{e}ron-Zygmund estimates (cf. Chapter 9 in \cite{giltru}) imply $w\in W^{2,q}_{loc}(\R^n)$
   for all $q\in [1,\frac{n}{2})$. Sobolev's imbedding theorem then implies $f\in L^q_{loc}(\R^n)$ for all $q\in
   [1,\frac{2n}{(6-n)_+})$ and thus $w\in W^{2,q}_{loc}(\R^n)$ for all $q\in
   [1,\frac{2n}{(6-n)_+})$ again by Cald\'{e}ron-Zygmund estimates. In particular, up to a set of
   measure zero $w$ is locally uniformly continuous and satisfies $-\Delta w + Vw = \psi$ pointwise
   in $\R^n$.
   
   \smallskip
   
   Since $p>\frac{n}{n-2}$ we can find $s\in (\frac{n(p-1)}{n(p-1)-2},\frac{2n}{(6-n)_+})$. Recall
   from Section~\ref{distrib_sec} that this choice of $s$ implies $\hat U \in
   L^{\frac{s}{s-1}}(K)$. Let $(\phi_k)$ be a sequence of positive $C_0^\infty(\R^n)$-functions
   such that $\phi_k\to w$ uniformly on $K$ and in $W^{2,s}(K)$. Then $\hat U V\in L^1(K)$ and
   \begin{align*}
     \int_{\R^n} \hat U(x)\psi(x)\,dx     
     &= \int_K \hat U(x) \big( -\Delta w + V(x) w \big)\,dx \\
     &= \lim_{k\to\infty} \int_K \hat U(x) \big( -\Delta \phi_k + V(x) \phi_k \big)\,dx \\ 
     &= \lim_{k\to\infty} \int_K \Gamma(x)\hat U(x)_+^p\phi_k(x)\,dx \\
     &= \int_K \Gamma(x)\hat U(x)_+^p w(x)\,dx 
     \geq  0.
   \end{align*}   
   Since $\psi\in C_0^\infty(\R^n),\psi\geq 0$ is arbitrary we obtain $\hat U\geq 0$ almost everywhere. \qed


\section{Proof of Theorem \ref{Thm regularity}}

  Under the assumptions of Theorem~\ref{Thm regularity} we now prove regularity
  properties of distributional solutions of \eqref{Gl II} in the case $1<p<\frac{n}{(n-2)_+}$. For
  $\omega>0$ we rewrite \eqref{Gl II} in the following way
  \begin{align}\label{Gl IInew}
    -\Delta u + \omega u &= g_\omega \quad\mbox{ where }
    g_\omega(x):= g(x,u(x)) + (\omega-V(x))u(x).
  \end{align}
  We will show that \eqref{Gl IInew} can be written in form of an integral equation using the
  Green function $G_\omega$ of $-\Delta + \omega$. Therefore we are lead to study the operator
  $T_\omega$ given by
  \begin{align*}
    T_\omega(f) 
    := \int_{\R^n} G_\omega(x-y)f(y)\,dy.  
  \end{align*}
  It is well-known (cf. \cite{graryz}, \cite{ste}) that
  \begin{align}\label{Gl Def GreensF G}
    G_\omega(x)
    = \omega^{\frac{n-2}{2}} G_1(\sqrt{\omega} x)
    = (2\pi)^{-\frac{n}{2}} |\omega^{-1/2} x|^{\frac{2-n}{2}}K_{\frac{n-2}{2}}(\sqrt{\omega}|x|).
  \end{align} 
  The following expansions can be found in \cite{graryz} for multiindices $\alpha$ with
  $|\alpha|\geq 1$:
  \begin{align}
    G_\omega(x)  =
    \begin{cases}
      O(1), & n=1 \\
      O(\log\frac{1}{|x|}), & n=2 \\
      O(|x|^{2-n}), & n\geq 3
    \end{cases}
    \quad\mbox{ as } |x|\to 0,\quad 
     G_\omega(x) = O(e^{-\sqrt{\omega}|x|}) \mbox{ as } |x|\to\infty. 
    \label{Gl GreensF asymptotic I}  \\ 
      D^\alpha G_\omega(x) = O(|x|^{2-n-|\alpha|}) \quad \mbox{ as } |x|\to 0, \quad   
      D^\alpha G_\omega(x) = O(e^{-\sqrt{\omega}|x|}) \mbox{ as }
      |x|\to\infty. \label{Gl GreensF asymptotic II}     
  \end{align}
  
  \medskip
  
  The proof of Theorem \ref{Thm regularity} is given in three steps: In Proposition~\ref{Prop
  Mapping properties} and Proposition~\ref{Prop local Mapping properties} we study the mapping
  properties of $T_\omega$ for fixed $\omega>0$ in order to prove in Proposition~\ref{Prop
  representation formula} the representation formula $u = T_\omega(g_\omega)$ for every
  distributional solution $u$ of \eqref{Gl II} with $u\in L^p(\R^n;\omega_0)$ and $\omega_0<\omega$.
  Finally we obtain the regularity result of Theorem~\ref{Thm regularity} by 
  a combination of the mapping properties of $T_\omega$ with the continuity/decay results of
  Proposition~\ref{Prop decayToZero} and Proposition~\ref{Prop exponentialdecay}.
    
  \begin{proposition}\label{Prop Mapping properties}
    Let $\omega>0$, $k\in \{0,1,2\}$ and $q,r\in [1,\infty]$. Then 
    $$ 
    T_\omega: L^q(\R^n)\to W^{k,r}(\R^n)
    $$
    provided $s:=(1+\frac{1}{r}-\frac{1}{q})^{-1}$ satisfies one of the following
    conditions:
    \begin{itemize}
      \item[(i)] Case $k=0$: $s\in [1,\frac{n}{(n-2)_+})$ or
      $n=1,s=\infty$ or $n\geq 3,q\in (1,\frac{n}{2}),s=\frac{n}{n-2}$.
      \item[(ii)] Case $k=1$: $s\in [1,\frac{n}{(n-1)_+})$ or
      $n=1,s=\infty$ or  $n\geq 2,q\in (1,n),s=\frac{n}{n-1}$.
      \item[(iii)] Case $k=2$: $q=r \in (1,\infty)$.
    \end{itemize}
    In each case there exists a constant $c=c(k,q,r,n)>0$ such that
    $$
      \|T_\omega f\|_{W^{k,r}(\R^n)} \leq c \|f\|_{L^q(\R^n)}\quad \mbox{ for all }f\in
      L^q(\R^n).
    $$ 
    Furthermore, in the cases $k=1$ or $k=2$ we have for all $|\alpha|=1$
    $$
      D^\alpha (T_\omega f)(x)= \int_{\R^n} (D^\alpha G_\omega)(x-y) f(y)\,dy.
    $$
  \end{proposition}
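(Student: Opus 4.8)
The plan is to reduce every assertion to a convolution estimate for the kernel $G_\omega$ or its first derivatives, exploiting Young's convolution inequality together with its weak-type refinement (the Hardy--Littlewood--Sobolev inequality), and, for $k=2$, the Calder\'on--Zygmund / Mikhlin multiplier theorem. The only genuinely non-routine inputs will be the borderline exponent cases and the second-order estimate.

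First I would record the integrability of the kernel from the asymptotics already stated. From \eqref{Gl GreensF asymptotic I}, for $n\geq 3$ one has $G_\omega(x)=O(|x|^{2-n})$ as $x\to 0$ and $G_\omega(x)=O(e^{-\sqrt\omega|x|})$ as $|x|\to\infty$, so $G_\omega\in L^s(\R^n)$ precisely for $s(n-2)<n$, i.e.\ $s\in[1,\tfrac{n}{n-2})$, while at the endpoint $G_\omega\in L^{n/(n-2),\infty}(\R^n)$ since $|x|^{2-n}$ lies in weak $L^{n/(n-2)}$; for $n=2$ the logarithmic singularity is in every $L^s_{loc}$, so $G_\omega\in L^s$ for $s\in[1,\infty)$, and for $n=1$, $G_\omega$ is bounded with exponential decay, so $G_\omega\in L^s$ for $s\in[1,\infty]$. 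Similarly, from \eqref{Gl GreensF asymptotic II}, $D^\alpha G_\omega(x)=O(|x|^{1-n})$ near $0$ with exponential decay at infinity, hence $D^\alpha G_\omega\in L^s(\R^n)$ for $s\in[1,\tfrac{n}{n-1})$ ($n\geq 2$), $D^\alpha G_\omega\in L^{n/(n-1),\infty}$ at the endpoint, and $D^\alpha G_\omega\in L^s$ for $s\in[1,\infty]$ when $n=1$. In particular $G_\omega,D^\alpha G_\omega\in L^1(\R^n)$ for every $n\geq 1$, $|\alpha|\leq 1$, and more precisely $G_\omega\in W^{1,1}(\R^n)$, the pointwise gradient agreeing with the distributional one because the boundary term on $\partial B_\eps$ is $O(\eps)$.

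Then I would argue case by case. For $k=0$ and $s\in[1,\tfrac{n}{(n-2)_+})$ (plus the extra endpoint $s=\infty$, $n=1$), Young's inequality $\|G_\omega*f\|_{L^r}\leq\|G_\omega\|_{L^s}\|f\|_{L^q}$ with $1+\tfrac1r=\tfrac1q+\tfrac1s$ gives the claim; for the remaining case $n\geq3$, $q\in(1,\tfrac n2)$, $s=\tfrac{n}{n-2}$, the relation forces $\tfrac1r=\tfrac1q-\tfrac2n\in(0,1)$, so $q,r\in(1,\infty)$, and one invokes the weak Young inequality $L^q*L^{s,\infty}\subset L^r$, i.e.\ Hardy--Littlewood--Sobolev. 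For $k=1$ I would first justify $D^\alpha(T_\omega f)=(D^\alpha G_\omega)*f$ for $|\alpha|=1$: since $G_\omega\in W^{1,1}(\R^n)$ and $f\in L^q$, this is the standard identity $\partial_i(g*f)=(\partial_i g)*f$, checked by testing against $\phi\in C_c^\infty$ and Fubini; then the $L^s$/weak-$L^s$ bounds on $D^\alpha G_\omega$ together with (weak) Young give $D^\alpha T_\omega:L^q\to L^r$ for $s\in[1,\tfrac{n}{(n-1)_+})$, resp.\ at the endpoint $s=\tfrac{n}{n-1}$ with $q\in(1,n)$ (so $\tfrac1r=\tfrac1q-\tfrac1n\in(0,1)$), and combining with the $k=0$ bound at the same $q,r$ yields $T_\omega:L^q\to W^{1,r}$. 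For $k=2$ I would argue that for $|\alpha|=|\beta|=1$ the operator $f\mapsto D^\alpha D^\beta T_\omega f$ has Fourier symbol $m(\xi)=\xi_\alpha\xi_\beta/(|\xi|^2+\omega)$, which is smooth and satisfies $|D^\gamma m(\xi)|\leq C_\gamma|\xi|^{-|\gamma|}$, hence is $L^q$-bounded for every $q\in(1,\infty)$ by the Mikhlin multiplier theorem (equivalently $D^\alpha D^\beta G_\omega$ is a Calder\'on--Zygmund kernel plus a multiple of $\delta_0$); together with the $k=0$ and $k=1$ bounds at $s=1$, i.e.\ $r=q$, this gives $T_\omega:L^q\to W^{2,q}$, the identity for $|\alpha|=1$ being exactly the one from the $k=1$ step.

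The routine portions are the kernel asymptotics and the bookkeeping of $q,r,s$. The main obstacles will be (a) the borderline cases $s=\tfrac{n}{n-2}$ and $s=\tfrac{n}{n-1}$, where Young's inequality degenerates and one must use Hardy--Littlewood--Sobolev --- which is also what forces the restrictions $q\in(1,\tfrac n2)$, resp.\ $q\in(1,n)$ --- and (b) the second-order bound, which is not a convolution-norm estimate but a singular-integral statement, and is precisely why $k=2$ is confined to $q=r\in(1,\infty)$. The passage of the derivative onto the kernel is a minor technical point settled via $G_\omega\in W^{1,1}(\R^n)$ and Fubini.
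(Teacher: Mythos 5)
Your proposal is correct, and for the non-borderline cases it coincides with the paper's argument: both rest on the kernel asymptotics \eqref{Gl GreensF asymptotic I}, \eqref{Gl GreensF asymptotic II} giving $G_\omega\in L^s(\R^n)$ resp.\ $|\nabla G_\omega|\in L^s(\R^n)$ in the stated ranges, followed by Young's convolution inequality, with the identity $D^\alpha(T_\omega f)=(D^\alpha G_\omega)\ast f$ justified by you via $G_\omega\in W^{1,1}(\R^n)$ and Fubini and by the paper via dominated convergence --- a negligible difference. Where you genuinely diverge is in the borderline and second-order cases. The paper handles $k=2$ by citing Stein (Singular Integrals, Ch.~V, Thm.~3), and then obtains the endpoints $s=\frac{n}{n-2}$, $q\in(1,\frac n2)$ (for $k=0$) and $s=\frac{n}{n-1}$, $q\in(1,n)$ (for $k=1$) as corollaries of this $W^{2,q}$ bound via the Sobolev embeddings $W^{2,q}(\R^n)\hookrightarrow L^{\frac{nq}{n-2q}}(\R^n)$ and $W^{2,q}(\R^n)\hookrightarrow W^{1,\frac{nq}{n-q}}(\R^n)$. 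You instead treat the endpoints directly by weak Young / Hardy--Littlewood--Sobolev, using $G_\omega\in L^{\frac{n}{n-2},\infty}$ and $\nabla G_\omega\in L^{\frac{n}{n-1},\infty}$ (which is where the restrictions $q\in(1,\frac n2)$, $q\in(1,n)$ enter for you, exactly matching the hypotheses), and you prove the $k=2$ case yourself via the Mikhlin multiplier theorem applied to $m(\xi)=\xi_\alpha\xi_\beta/(|\xi|^2+\omega)$ rather than quoting it. Both routes are sound and yield the same conclusions; yours is more self-contained (modulo HLS and Mikhlin as black boxes) and makes transparent why the endpoint restrictions on $q$ appear, while the paper's is shorter and derives the endpoints as painless consequences of the already-quoted Calder\'on--Zygmund bound.
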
	  
  

  \begin{proof}  
     The proof of (iii) can be found in \cite{ste}, Theorem 3, Chapter V. Let us prove (i), i.e, $k=0$. 
     Young's inequality gives 
     $$ 
          \|T_\omega f\|_{L^r(\R^n)}           
          = \|G_\omega\ast f\|_{L^r(\R^n)} 
          \leq  \|G_\omega\|_{L^s(\R^n)} \|f\|_{L^q(\R^n)}
      $$ 
    provided $q,r,s \in [1,\infty]$ satisfy $1+\frac{1}{r}= \frac{1}{s}+\frac{1}{q}$. In the cases
    $n=1,n\geq 2$ the asymptotic formulas \eqref{Gl GreensF asymptotic I}, \eqref{Gl GreensF
    asymptotic II} show that $G_\omega\in L^s(\R^n)$ for all $s\in [1,\infty], [1,\frac{n}{n-2})$
    respectively and the first two subcases are proved. The case $n\geq 3,q\in
    (1,\frac{n}{2}),s=\frac{n}{n-2}$ follows from (iii) and Sobolev's imbedding theorem $W^{2,q}(\R^n)\to 
    L^{\frac{nq}{n-2q}}(\R^n)$.
   
    \medskip
 
    \noindent  Next we prove (ii). By \eqref{Gl GreensF asymptotic I}, \eqref{Gl
    GreensF asymptotic II} we have $|\nabla G_\omega(z)|\sim |z|^{1-n}$ as $z\to 0$ and
    $|\nabla G_\omega(z)|\sim e^{-\sqrt{\omega}|z|}$ as $|z|\to \infty$. Hence $|\nabla
    G_\omega| \in L^s(\R^n)$ for $s\in [1,\infty],[1,\frac{n}{n-1})$ in the cases
    $n=1,n\geq 2$ respectively. In these cases the dominated convergence theorem and Young's
    inequality apply and yield $\nabla (T_\omega f) = \nabla G_\omega \ast f$ as well as 
    $$ 
        \| |\nabla (T_\omega f)|\|_{L^r(\R^n)} 
        \leq \| |\nabla G_\omega| \|_{L^s(\R^n)} \|f\|_{L^q(\R^n)}. 
    $$ 
    The case $n\geq 2,q\in (1,n),s=\frac{n}{n-1}$
    again follows from the case $k=2$ and Sobolev's imbedding theorem $W^{2,q}(\R^n)\to
    W^{1,\frac{nq}{n-q}}(\R^n)$.
\end{proof}

  We will also need the following local version of Proposition~\ref{Prop Mapping properties} where we
  use weighted Lebesgue spaces
  \begin{align} \label{Gl Def WeightedLpspace}
     L^q(\R^n;\omega) := \big\{ u\in L^q_{loc}(\R^n)\;:\; \int_{\R^n} |u(x)|^q
    e^{-\sqrt{\omega}|x|} \,dx < \infty \big\}. 
  \end{align}
   for $1\leq q<\infty$ and $\omega> 0$. We set $\|u\|_{L^q(\R^n;\omega)} := \left(\int_{\R^n}
   |u(x)|^q e^{-\sqrt{\omega}|x|} \,dx\right)^{1/q}$.
    
  \begin{proposition}\label{Prop local Mapping properties}
     Let $\omega>0$, $k\in \{0,1\}$ and $q,r\in [1,\infty]$. Then 
    $$ 
      T_\omega: L^q_{loc}(\R^n)\cap L^1(\R^n;\omega) \to W^{k,r}_{loc}(\R^n)
    $$
    provided $s:=(1+\frac{1}{r}-\frac{1}{q})^{-1}$ satisfies one of the following
    conditions:
    \begin{itemize}
      \item[(i)] Case $k=0$: $s\in [1,\frac{n}{(n-2)_+})$ or
      $n=1,s=\infty$.
      \item[(ii)] Case $k=1$: $s\in [1,\frac{n}{(n-1)_+})$ or
      $n=1,s=\infty$.
    \end{itemize}    
    In each case for all compact sets $K_1,K_2$ such that $K_1\subset\subset K_2$ there
    exists a constant $c=c(k,q,r,n,K_1,K_2)>0$ such that 
    $$
      \|T_\omega f\|_{W^{k,r}(K_1)} \leq c \big(\|f\|_{L^q(K_2)}+\|f\|_{L^1(\R^n;\omega)}\big)
      \quad\mbox{ for all } f\in L^q_{loc}(\R^n)\cap L^1(\R^n;\omega)
    $$
    First order derivatives of $T_\omega f$ can be taken under the integral as in
    Proposition~\ref{Prop Mapping properties}.
    \end{proposition}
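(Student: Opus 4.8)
The plan is to localise by splitting $f$ relative to $K_1$: a part supported inside $K_2$, to which the already established global Proposition~\ref{Prop Mapping properties} applies verbatim, and a remainder living away from $K_1$, which I control through the exponential decay of $G_\omega$ and the weight $e^{-\sqrt{\omega}|\cdot|}$.

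First I would fix $K_1\subset\subset K_2$, put $d:=\dist(K_1,\R^n\setminus K_2)>0$ and $R_1:=\sup_{x\in K_1}|x|$, and write $f=f_1+f_2$ with $f_1:=f\,1_{K_2}$ and $f_2:=f\,1_{\R^n\setminus K_2}$. Since $f\in L^q_{loc}(\R^n)$ and $K_2$ is compact, $f_1\in L^q(\R^n)$ with $\|f_1\|_{L^q(\R^n)}=\|f\|_{L^q(K_2)}$; moreover the ranges of $s$ in (i) and (ii) are exactly the non-borderline cases covered by Proposition~\ref{Prop Mapping properties}(i), (ii). That proposition therefore gives $T_\omega f_1\in W^{k,r}(\R^n)$ with $\|T_\omega f_1\|_{W^{k,r}(\R^n)}\leq c\,\|f\|_{L^q(K_2)}$, and for $k=1$ also $D^\alpha(T_\omega f_1)=(D^\alpha G_\omega)\ast f_1$ for $|\alpha|=1$.

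For the remainder I would exploit that $|x-y|\geq d$ whenever $x\in K_1$ and $y\in\R^n\setminus K_2$. Combining \eqref{Gl GreensF asymptotic I}, \eqref{Gl GreensF asymptotic II} with the continuity of $G_\omega$ and of $D^\alpha G_\omega$ ($|\alpha|=1$) on the set $\{|z|\geq d\}$ produces a constant $C=C(d,\omega,n)$ with $|G_\omega(z)|+|D^\alpha G_\omega(z)|\leq C\,e^{-\sqrt{\omega}|z|}$ for all $|z|\geq d$; together with the elementary bound $e^{-\sqrt{\omega}|x-y|}\leq e^{\sqrt{\omega}R_1}e^{-\sqrt{\omega}|y|}$ (which holds because $|y|-|x-y|\leq|x|\leq R_1$) this yields
$$
  |G_\omega(x-y)|+|D^\alpha G_\omega(x-y)|\leq C\,e^{\sqrt{\omega}R_1}\,e^{-\sqrt{\omega}|y|}\qquad(x\in K_1,\ y\in\R^n\setminus K_2).
$$
Hence the integral defining $T_\omega f_2(x)$ converges absolutely and $|T_\omega f_2(x)|\leq C\,e^{\sqrt{\omega}R_1}\|f\|_{L^1(\R^n;\omega)}$ uniformly for $x\in K_1$; the same integrable majorant (independent of $x$) justifies differentiation under the integral sign, so $T_\omega f_2$ is $C^1$ in a neighbourhood of $K_1$, $D^\alpha(T_\omega f_2)(x)=\int_{\R^n\setminus K_2}(D^\alpha G_\omega)(x-y)f(y)\,dy$, and, using $|K_1|<\infty$, $\|T_\omega f_2\|_{W^{k,r}(K_1)}\leq C'\,\|f\|_{L^1(\R^n;\omega)}$.

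Finally I would add the two pieces: on $K_1$ one has $T_\omega f=T_\omega f_1+T_\omega f_2\in W^{k,r}(K_1)$, the triangle inequality gives the claimed estimate $\|T_\omega f\|_{W^{k,r}(K_1)}\leq c(\|f\|_{L^q(K_2)}+\|f\|_{L^1(\R^n;\omega)})$, and for $k=1$ the weak derivative on $K_1$ is $(D^\alpha G_\omega)\ast f_1+(D^\alpha G_\omega)\ast f_2=(D^\alpha G_\omega)\ast f$, which is the asserted differentiation under the integral. I do not expect a genuine obstacle; the only step requiring care is the uniform exponential bound for the far part, i.e.\ correctly gluing the $|z|\to\infty$ asymptotics \eqref{Gl GreensF asymptotic I}, \eqref{Gl GreensF asymptotic II} to the boundedness of $G_\omega$ and $D^\alpha G_\omega$ on the compact annulus $\{d\leq|z|\leq M\}$ and invoking the triangle inequality, plus the bookkeeping that the hypotheses (i), (ii) place $f_1$ within the scope of Proposition~\ref{Prop Mapping properties}.
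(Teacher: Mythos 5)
Your proposal is correct and follows essentially the same route as the paper: the near contribution is handled by applying the global Proposition~\ref{Prop Mapping properties} to $f\,1_{K_2}$, and the far contribution by the uniform bound $|G_\omega(z)|+|\nabla G_\omega(z)|\leq C e^{-\sqrt{\omega}|z|}$ away from the origin together with $e^{-\sqrt{\omega}|x-y|}\leq e^{\sqrt{\omega}R_1}e^{-\sqrt{\omega}|y|}$ for $x$ in a compact set, which produces exactly the $\|f\|_{L^q(K_2)}+\|f\|_{L^1(\R^n;\omega)}$ estimate. The only (cosmetic) difference is that you split the function $f$ by $1_{K_2}$, whereas the paper splits the integration region into $x+B$ and its complement with $K_1+\overline{B}\subset K_2$; both yield the same bounds and your explicit justification of differentiation under the integral for the far part matches the paper's brief remark for $k=1$.
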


  \begin{proof}
    Consider first the case $k=0$. For given compact sets $K_1,K_2$ with $K_1\subset\subset
    K_2$ let $B$ be an open ball centered at $0$ such that $K_1+\overline{B}\subset
    K_2$ where $K_1+\overline{B}$ denotes the Minkowski sum of $K_1$ and $\overline{B}$. Then there
    exists $C_B>0$ such that $|G_\omega(z)|\leq C_B e^{-\sqrt{\omega}|z|}$ for all $z\in
    \R^n\setminus B$, cf. \eqref{Gl GreensF asymptotic I}. If $q,r$ are as in the theorem with
    $r<\infty$ then Proposition \ref{Prop Mapping properties},(i) shows
      \begin{align*}
  	    \|T_\omega f\|_{L^r(K_1)}^r
      	&\leq \int_{K_1} \left( \int_{x+B} G_\omega(x-y)|f(y)|\,dy +
       	\int_{x+\R^n\setminus B} G_\omega(x-y)|f(y)|\,dy\right)^r \,dx \\
      	&\leq c\left( \|T_\omega (|f\,1_{K_2}|)\|_{L^r(\R^n)}^r  +
	        C_B^r \int_{K_1} \big(\int_{\R^n} e^{\sqrt{\omega}(|x|-|y|)}|f(y)|\,dy\big)^r \,dx\right) 
	        \\ 
	    &\leq c\left(  \|f\,1_{K_2}\|_{L^q(\R^n)}^r  
	        + \big(\int_{\R^n} e^{-\sqrt{\omega}|y|}|f(y)|\,dy\big)^r \right) \\
      	&= c( \|f\|_{L^q(K_2)}^r  + \|f\|_{L^1(\R^n;\omega)}^r	).
      \end{align*} 
    In the case $r=\infty$ we obtain with the
    same notations as above
    \begin{align*}
       \|T_\omega f\|_{L^\infty(K_1)}  
      &\leq c\left(\| T_\omega (|f\,1_{K_2}|) \|_{L^\infty(\R^n)}
        + \| \int_{\R^n} e^{\sqrt{\omega}(|\cdot|-|y|)} |f(y)| \,dy\|_{L^\infty(K_1)} \right)  \\ 
      &\leq c \left( \|f\|_{L^q(K_2)} + \|f\|_{L^1(\R^n;\omega)} \right).
    \end{align*}
    This finishes the proof of (i). The case $k=1$ is treated similarly using
    the mapping property (ii) in Proposition~\ref{Prop Mapping properties} instead of (i).  
  \end{proof}
     
  Next we prove the representation formula $u=T_\omega(g_\omega)$ for distributional solutions
  $u$ of \eqref{Gl II} which satisfy $u\in L^p(\R^n;\omega_0)$ for some $\omega_0<\omega$. To
  this end we first show that the corresponding linear problem has at most one solution in
  $L^1(\R^n;\omega)$.
  
  \begin{proposition}\label{Prop uniqueness linear}
    Let $v\in L^1(\R^n;\omega)$ be a distributional solution of $-\Delta v + \omega v = 0$. Then
    $v=0$.  
  \end{proposition}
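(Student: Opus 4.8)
The idea is to test the equation against a mollification of the fundamental solution $G_\omega(x_0-\cdot)$ and to exploit that the decay rates of $G_\omega$ and $\nabla G_\omega$ from \eqref{Gl GreensF asymptotic I}, \eqref{Gl GreensF asymptotic II} match \emph{exactly} the weight in the definition \eqref{Gl Def WeightedLpspace} of $L^1(\R^n;\omega)$. First I would fix an even mollifier $\rho\in C_c^\infty(B_1(0))$ with $\rho\ge 0$, $\int_{\R^n}\rho=1$, set $\rho_\delta:=\delta^{-n}\rho(\cdot/\delta)$ for $0<\delta\le 1$, and put $\psi_\delta:=G_\omega\ast\rho_\delta$. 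Since $G_\omega\in L^1_{loc}(\R^n)$ and $(-\Delta+\omega)G_\omega=\delta_0$ in $\cD'(\R^n)$, the function $\psi_\delta$ lies in $C^\infty(\R^n)$ and satisfies $(-\Delta+\omega)\psi_\delta=\rho_\delta$ pointwise; moreover \eqref{Gl GreensF asymptotic I}, \eqref{Gl GreensF asymptotic II} together with $\supp\rho_\delta\subset B_1(0)$ yield a constant $C>0$, independent of $\delta\in(0,1]$, with
\begin{align*}
  |\psi_\delta(z)|+|\nabla\psi_\delta(z)|\le C\,e^{-\sqrt{\omega}|z|}\qquad\text{for all }|z|\ge 2 .
\end{align*}

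Next, fix $x_0\in\R^n$ and a family of cut-offs $\eta_R\in C_c^\infty(\R^n)$, $R\ge 1$, with $\eta_R\equiv 1$ on $B_R(0)$, $\supp\eta_R\subset B_{2R}(0)$, $|\nabla\eta_R|\le C/R$ and $|\Delta\eta_R|\le C/R^2$. For $R>|x_0|+2$ the function $\phi_R(y):=\eta_R(y)\,\psi_\delta(x_0-y)$ belongs to $C_c^\infty(\R^n)$, hence may be inserted into the distributional equation for $v$. Writing $\Psi(y):=\psi_\delta(x_0-y)$, noting $(-\Delta_y+\omega)\Psi(y)=\rho_\delta(x_0-y)$, and using the product rule one gets
\begin{align*}
  0=\int_{\R^n} v\,(-\Delta+\omega)\phi_R\,dy=\int_{\R^n} v(y)\,\rho_\delta(x_0-y)\,dy-\int_{\R^n} v(y)\bigl(2\nabla\eta_R\cdot\nabla\Psi+\Delta\eta_R\,\Psi\bigr)(y)\,dy ,
\end{align*}
where in the first integral I used that $\eta_R\equiv 1$ on a neighbourhood of $\supp\rho_\delta(x_0-\cdot)$. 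The first integral equals $(v\ast\rho_\delta)(x_0)=:v_\delta(x_0)$.

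Finally, the remaining integral is supported in the annulus $B_{2R}(0)\setminus B_R(0)$, on which $|x_0-y|\ge|y|-|x_0|\ge 2$, so by the bound above $|\Psi(y)|,|\nabla\Psi(y)|\le C e^{\sqrt{\omega}|x_0|}e^{-\sqrt{\omega}|y|}$ there; combined with $|\nabla\eta_R|,|\Delta\eta_R|\le C$ this gives
\begin{align*}
  |v_\delta(x_0)|\le C\int_{\{|y|\ge R\}}|v(y)|\,e^{-\sqrt{\omega}|y|}\,dy ,
\end{align*}
and the right-hand side tends to $0$ as $R\to\infty$, being the tail of the finite integral $\|v\|_{L^1(\R^n;\omega)}$. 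Hence $v_\delta(x_0)=0$ for every $x_0\in\R^n$, so $v_\delta\equiv 0$ for each $\delta\in(0,1]$; letting $\delta\to 0$ and using $v_\delta\to v$ in $L^1_{loc}(\R^n)$ yields $v=0$. The one point that deserves care is precisely this last estimate: it succeeds because $G_\omega$ decays no slower than $e^{-\sqrt{\omega}|x|}$, so the cut-off error terms $\nabla\eta_R\cdot\nabla\Psi$ and $\Delta\eta_R\,\Psi$ are controlled by $e^{-\sqrt{\omega}|y|}$ on the annulus (the derivatives of $\eta_R$ contributing only harmless bounded factors); everything else is the routine fundamental-solution computation. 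Note that no regularity of $v$ beyond $v\in L^1_{loc}(\R^n)$ is used, since $v$ enters only through integrals against smooth compactly supported functions.
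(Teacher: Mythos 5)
Your argument is correct and is essentially the paper's proof: the paper tests the equation with $\chi_R\,T_\omega(\psi)$ for an arbitrary $\psi\in C_0^\infty(\R^n)$ and lets $R\to\infty$, exploiting exactly the same matching between the exponential decay of $G_\omega$, $\nabla G_\omega$ and the weight $e^{-\sqrt{\omega}|x|}$ to make the cut-off error terms vanish against $v\in L^1(\R^n;\omega)$. Your only deviation is to specialize $\psi$ to the translated mollifiers $\rho_\delta(x_0-\cdot)$ and append a final $\delta\to 0$ step, a harmless (slightly longer) variant of concluding $\int_{\R^n} v\psi\,dx=0$ for all $\psi$ directly.
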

  \begin{proof}
     Let $\psi\in C_0^\infty(\R^n)$ be  arbitrary and for $R>0$ set $\phi_R := \chi_R
     T_\omega(\psi)\in C_0^\infty(\R^n)$ where $\chi_R(x)=\chi(R^{-1}x)$ for a fixed function $\chi\in
     C_0^\infty(\R^n)$ with $\chi(0)=1$. Since $v\in L^1(\R^n;\omega)$ we have
     $|T_\omega(\psi)||v|+|\nabla T_\omega(\psi)||v|\in
     L^1(\R^n)$. Hence the dominated convergence theorem gives
     \begin{align*}
       0 
       &= \lim_{R\to\infty} \int_{\R^n} v(-\Delta\phi_R + \omega\phi_R)\,dx \\
       &= \lim_{R\to\infty} \Big[\int_{\R^n}  \chi_R  v\psi \,dx
          + \int_{\R^n} \big(-\Delta\chi_R T_\omega(\psi) -2\nabla\chi_R \nabla
       T_\omega(\psi) \big) v \,dx \Big] \\
       &=  \int_{\R^n}  v\psi \,dx.       
     \end{align*}
     Since $\psi\in C_0^\infty(\R^n)$ was arbitrary we get $v=0$.  
  \end{proof}

  \begin{proposition}\label{Prop representation formula}
    Let $1\leq p <\infty$ and let $g:\R^n\times\R\to \R$ be a Carath\'{e}odory function with $|g(x,s)|\leq C(1+|s|^p)$ for all $s\in\R$ and almost all $x\in\R^n$. Let $u\in L^p(\R^n;\omega_0)$ for some $\omega_0>0$ be a distributional solution of \eqref{Gl II}. Then for all $\omega>\omega_0$ we have $u=T_\omega(g_\omega)$ almost everywhere on $\R^n$
    with $g_\omega$ given by \eqref{Gl IInew}.
  \end{proposition}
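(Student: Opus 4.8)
The plan is to exhibit both $u$ and $T_\omega(g_\omega)$ as distributional solutions of one and the same linear equation $-\Delta w+\omega w=g_\omega$ and then to let the uniqueness statement of Proposition~\ref{Prop uniqueness linear} finish the job for the difference $v:=u-T_\omega(g_\omega)$. Before that can work I first need the integrability bookkeeping: since $|g(x,s)|\le C(1+|s|^p)$, $V\in L^\infty(\R^n)$ and $p\ge1$, we have $|g_\omega(y)|\le C'(1+|u(y)|^p)$, so $u\in L^p(\R^n;\omega_0)$ together with $\int_{\R^n}e^{-\sqrt{\omega_0}|x|}\,dx<\infty$ gives $g_\omega\in L^1(\R^n;\omega_0)\cap L^1_{loc}(\R^n)$, and in particular $g_\omega\in L^1(\R^n;\omega)$ because $\omega>\omega_0$.

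Next I would show that $T_\omega(g_\omega)$ is well-defined and sits in a weighted $L^1$-space. The reason the hypothesis $\omega>\omega_0$ is used is that, by the asymptotics \eqref{Gl GreensF asymptotic I}, the function $z\mapsto G_\omega(z)e^{\sqrt{\omega_0}|z|}$ is $O(|z|^{2-n})$ near the origin and $O\!\big(e^{-(\sqrt\omega-\sqrt{\omega_0})|z|}\big)$ at infinity, hence $C_{\omega,\omega_0}:=\int_{\R^n}G_\omega(z)e^{\sqrt{\omega_0}|z|}\,dz<\infty$. Combining this with $e^{-\sqrt{\omega_0}|x|}\le e^{-\sqrt{\omega_0}|y|}e^{\sqrt{\omega_0}|x-y|}$ and Tonelli's theorem gives $\int_{\R^n}|T_\omega(g_\omega)(x)|\,e^{-\sqrt{\omega_0}|x|}\,dx\le C_{\omega,\omega_0}\,\|g_\omega\|_{L^1(\R^n;\omega_0)}<\infty$, so that the integral defining $T_\omega(g_\omega)(x)$ is absolutely convergent for a.e.\ $x$ and $T_\omega(g_\omega)\in L^1(\R^n;\omega_0)\subset L^1(\R^n;\omega)$.

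Then I would verify the two distributional identities. For $u$ this is immediate: subtracting $\int_{\R^n}(\omega-V)u\phi\,dx$ (finite, since $(\omega-V)u\in L^1_{loc}$) from both sides of the defining identity in Definition~\ref{Def very weak solution} turns $-\Delta u+Vu=g(x,u)$ into $\int_{\R^n}u\,(-\Delta\phi+\omega\phi)\,dx=\int_{\R^n}g_\omega\phi\,dx$ for all $\phi\in C_c^\infty(\R^n)$. For $T_\omega(g_\omega)$, fix $\phi\in C_c^\infty(\R^n)$; the exponential decay of $G_\omega$ together with $g_\omega\in L^1(\R^n;\omega_0)$ and $\omega>\omega_0$ justifies Fubini in $\int_{\R^n}T_\omega(g_\omega)(x)\,(-\Delta\phi+\omega\phi)(x)\,dx=\int_{\R^n}g_\omega(y)\Big(\int_{\R^n}G_\omega(x-y)(-\Delta\phi+\omega\phi)(x)\,dx\Big)\,dy$, and after the substitution $x=z+y$ the inner integral equals $\phi(y)$ because $G_\omega$ is the fundamental solution of $-\Delta+\omega$. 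Hence $T_\omega(g_\omega)$ satisfies the same identity $\int_{\R^n}T_\omega(g_\omega)(-\Delta\phi+\omega\phi)\,dx=\int_{\R^n}g_\omega\phi\,dx$.

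Subtracting the two identities, $v:=u-T_\omega(g_\omega)$ is a distributional solution of $-\Delta v+\omega v=0$, and by the second paragraph $v\in L^1(\R^n;\omega)$; Proposition~\ref{Prop uniqueness linear} then forces $v=0$, i.e.\ $u=T_\omega(g_\omega)$ a.e.\ on $\R^n$, the argument working verbatim for every $\omega>\omega_0$. The only delicate point — and the one place where $\omega>\omega_0$ is genuinely used — is the global weighted estimate of the second paragraph: it is what makes $T_\omega(g_\omega)$ well-defined, places it in a weighted $L^1$-space on which Proposition~\ref{Prop uniqueness linear} is available, and legitimises the Fubini interchange; everything else is routine bookkeeping with the pointwise bound on $g$ and the definition of a distributional solution.
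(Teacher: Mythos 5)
Your proof is correct and takes essentially the same route as the paper: you exhibit both $u$ and $T_\omega(g_\omega)$ as elements of a weighted $L^1$ space solving $-\Delta w+\omega w=g_\omega$ in the distributional sense (via Fubini and the fundamental-solution property of $G_\omega$) and then invoke Proposition~\ref{Prop uniqueness linear} for the difference. The only cosmetic difference is your weighted estimate through $\int_{\R^n}G_\omega(z)e^{\sqrt{\omega_0}|z|}\,dz<\infty$, which replaces the paper's splitting of the convolution into the regions $|x-y|\le 1$ and $|x-y|\ge 1$ and in fact yields the slightly stronger conclusion $T_\omega(g_\omega)\in L^1(\R^n;\omega_0)$; both exploit $\omega>\omega_0$ in the same way.
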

  \begin{proof}
    By assumption the function $u\in L^p(\R^n;\omega_0)\subset L^1(\R^n;\omega)$ satisfies
    \begin{align*}
      \int_{\R^n} u(-\Delta\phi + \omega\phi)\,dx = \int_{\R^n} g_\omega \phi\,dx
      \qquad\forall \phi\in C_0^\infty(\R^n).
    \end{align*}
    On the other hand let us show that $T_\omega(g_\omega)\in L^1(\R^n;\omega)$ satisfies the same integral relation.
    Indeed, we have $g_\omega = g(\cdot,u) + (\omega-V)u \in L^1(\R^n;\omega_0)$ so that
    \eqref{Gl GreensF asymptotic I} implies
    \begin{align*}
      \int_{\R^n} |T_\omega(g_\omega)| e^{-\sqrt{\omega}|x|} \,dx
      &\leq \int_{\R^n} \int_{\R^n} G_\omega(x-y)|g_\omega(y)| e^{-\sqrt{\omega}|x|} \,dx \,dy\\
      &= \int_{\R^n} |g_\omega(y)|e^{-\sqrt{\omega_0}|y|}  \int_{\R^n}
      e^{\sqrt{\omega_0}|y|} G_\omega(x-y) e^{-\sqrt{\omega}|x|} \,dx \,dy\\
      &\leq \int_{\R^n} |g_\omega(y)|e^{-\sqrt{\omega_0}|y|}  \Big[ 
      c\int_{\{|x-y|\geq 1\}}  e^{\sqrt{\omega_0}|y|}e^{-\sqrt{\omega}|x-y|} e^{-\sqrt{\omega}|x|}
      \,dx \\
      &\qquad + \int_{\{|x-y|\leq 1\}} e^{\sqrt{\omega_0}|y|} G_\omega(x-y) e^{-\sqrt{\omega}|x|} \,dx
       \Big]\,dy  \\
      &\leq \int_{\R^n} |g_\omega(y)|e^{-\sqrt{\omega_0}|y|}  \Big[ c 
      \int_{\{|x-y|\geq 1\}}  e^{\sqrt{\omega_0}|y|}e^{-\sqrt{\omega_0}|x-y|} e^{-\sqrt{\omega}|x|} \,dx   \\
      &\qquad
      + \int_{\{|x-y|\leq 1\}} e^{\sqrt{\omega_0}|y|} G_\omega(x-y) e^{-\sqrt{\omega}(|y|-1)} \,dx
       \Big] \,dy \\
      &\leq c \int_{\R^n} |g_\omega(y)|e^{-\sqrt{\omega_0}|y|}  \Big[ 
      \int_{\{|x-y|\geq 1\}} e^{(\sqrt{\omega_0}-\sqrt{\omega})|x|} \,dx  
      + \int_{\{|z|\leq 1\}} G_\omega(z) \,dz \Big] \,dy \\
      &\leq c \int_{\R^n} |g_\omega(y)|e^{-\sqrt{\omega_0}|y|}\,dy
      < \infty,
    \end{align*}
    where we have used that $G_\omega$ is a locally integrable function. Furthermore, Fubini's theorem yields for $\phi \in
    C_0^\infty(\R^n)$
    \begin{align}
      \int_{\R^n} T_\omega(g_\omega)(-\Delta\phi+\omega\phi)\,dx
      &= \int_{\R^n} \left( \int_{\R^n} G_\omega(x-y)g_\omega(y)  \,dy\right)
      (-\Delta\phi(x)+ \omega\phi(x)) \,dx \notag  \\ 
      &= \int_{\R^n} g_\omega(y)
      \left(\int_{\R^n}G_\omega(x-y)(-\Delta\phi(x)+ \omega\phi(x))\,dx\right)\,dy
      \notag\\
      &= \int_{\R^n} g_\omega(y)
      \left(\int_{\R^n}G_\omega(y-x)(-\Delta\phi(x)+ \omega\phi(x))\,dx\right)\,dy
      \notag\\
      &= \int_{\R^n} g_\omega(y)  \phi(y)\,dy.
    \end{align}
     Applying Proposition~\ref{Prop uniqueness linear} to $v=u-T_\omega(g_\omega)$ we conclude $u = T_\omega(g_\omega)$. 
   \end{proof}~

   \subsection{Proof of Theorem~\ref{Thm regularity},(2)} 
      
   Let $g$ satisfy \eqref{Gl growth cond II} and let $u\in L^p(\R^n) $ be a distributional solution
   of \eqref{Gl II}. Then \eqref{Gl growth cond II} and the assumption $1<p<\frac{n}{n-2}$ implies that
   $$
     W(x):= V(x)- \frac{g(x,u(x))}{u(x)}1_{\{u(x)\neq 0\}}
   $$ 
   lies in the Kato class $K_n$ ~-- see \eqref{Gl KatoNorm} ~-- and thus Proposition \ref{Prop
   decayToZero} implies $u\in L^\infty(\R^n)$ and $u(x)\to
   0$ as $|x|\to\infty$. Hence, $u\in L^p(\R^n)\cap L^\infty(\R^n)$ and thus $g_\omega\in
   L^p(\R^n)\cap L^\infty(\R^n)$ where $g_\omega$ is defined in \eqref{Gl IInew}. From
   Proposition~\ref{Prop representation formula} we get $u=T_\omega(g_\omega)$. From
   Proposition~\ref{Prop Mapping properties} with $(k,q,r)=(1,q,q), q\in [p,\infty]$ and
   $(k,q,r)=(2,q',q'), q'\in [p,\infty)$ we get $u\in W^{1,q}(\R^n)\cap W^{2,q'}(\R^n)$ for all
   $q\in [p,\infty],q'\in [p,\infty)$.
   
   \medskip
   
   Now, in addition let us assume (H2) and \eqref{Gl growth cond III}. Then \cite{pankov2}, Theorem 8.3.1 implies 
   $$
     \sigma_{ess}(-\Delta + W(x)) = \sigma_{ess}(-\Delta + V(x)) \subset [\Sigma,\infty) 
   $$
   Hence, Proposition~\ref{Prop exponentialdecay} applies to $u$ and $\Omega=\R^n$ and it follows
   $|u(x)| \leq C_\mu e^{-\mu|x|}$ for almost all $x\in\R^n$. In  particular $u\in L^1(\R^n)$ so
   that $u\in W^{1,q}(\R^n)\cap W^{2,q'}(\R^n)$ for all $q\in [1,\infty],q'\in (1,\infty)$ by
   Proposition~\ref{Prop Mapping properties}.
   
   \medskip
      
   Finally, for all $\phi\in C_c^\infty(\R^n)$ we get from $u\in W^{2,1}_{loc}(\R^n)$ and the
   definition of a weak derivative 
   $$
     \int_{\R^n} (-\Delta u + Vu)\phi\,dx 
     = \int_{\R^n} u(-\Delta\phi+V\phi)\,dx
     = \int_{\R^n} g(x,u)\phi \,dx,
   $$ 
   hence $-\Delta u + Vu = g(x,u)$ almost everywhere which proves that $u$ is a strong solution of
   \eqref{Gl II}.
   
   \qed

  \subsection{Proof of Theorem~\ref{Thm regularity},(1)}
   
  Our aim is to show that $u$ satisfies the assumptions of
  Proposition~\ref{Prop representation formula} so that we may infer the local regularity
  properties of $u$ from the representation formula $u=T_\omega(g_\omega)$ and the mapping
  properties of $T_\omega$. For our approach we first need to check that functions in
  $W_0^{2,\infty}(\R^n)$ with compact support are admissible test functions for
  \eqref{Gl II}.

\begin{proposition}\label{Prop Testing}
  Let the assumptions of Theorem \ref{Thm regularity},(1) hold. Then
  \begin{equation*}
    \int_{\R^n} u(-\Delta \phi + V(x) \phi)\,dx = \int_{\R^n} g(x,u)\phi \,dx
  \end{equation*}
   for all $\phi\in  W_0^{2,\infty}(\R^n)$ such that $\supp(\phi)$ is compact.
\end{proposition}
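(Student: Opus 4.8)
The plan is a routine mollification argument: approximate $\phi$ by smooth compactly supported functions, apply the defining identity of Definition~\ref{Def very weak solution} to the approximations, and pass to the limit. So let $\phi\in W_0^{2,\infty}(\R^n)$ with $K:=\supp\phi$ compact; then the weak Laplacian $\Delta\phi$ is a bounded measurable function supported in $K$. Fix a standard mollifier $(\rho_\eps)_{\eps>0}$ and set $\phi_\eps:=\phi\ast\rho_\eps\in C_c^\infty(\R^n)$; for $\eps\le 1$ all $\phi_\eps$ are supported in the fixed compact set $K_1:=K+\overline{B_1}$. Since $\phi$ is continuous with compact support, $\phi_\eps\to\phi$ uniformly on $\R^n$, and since $\Delta\phi\in L^\infty(\R^n)$, the function $\Delta\phi_\eps=(\Delta\phi)\ast\rho_\eps$ is bounded by $\|\Delta\phi\|_{L^\infty}$, supported in $K_1$, and converges to $\Delta\phi$ in $L^{q}(K_1)$ for every $q<\infty$.

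The local integrability facts needed to take limits are: $u\in L^1_{loc}(\R^n)$ (because $u\in L^p_{loc}(\R^n)$ with $p>1$), $Vu\in L^1_{loc}(\R^n)$ (by Definition~\ref{Def very weak solution}), and $g(\cdot,u)\in L^1_{loc}(\R^n)$ (because $u\ge 0$, so $|g(x,u(x))|\le C_3+\max\{C_4,C_5\}\,u(x)^p$ by \eqref{Gl growth cond I}, and $u^p\in L^1_{loc}(\R^n)$). Testing \eqref{Gl II} in the sense of Definition~\ref{Def very weak solution} with $\phi_\eps$ gives
$$
  \int_{\R^n} u(-\Delta\phi_\eps+V(x)\phi_\eps)\,dx=\int_{\R^n} g(x,u)\phi_\eps\,dx .
$$
Now let $\eps\to 0$. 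For the Laplacian term, Hölder's inequality with conjugate exponents $p,p'$ together with the convergence $\Delta\phi_\eps\to\Delta\phi$ in $L^{p'}(K_1)$ yields $u\,\Delta\phi_\eps\to u\,\Delta\phi$ in $L^1(\R^n)$; for the other two terms, $\phi_\eps\to\phi$ uniformly with supports in $K_1$ together with $Vu,\,g(\cdot,u)\in L^1(K_1)$ give $\int_{\R^n} Vu\,\phi_\eps\,dx\to\int_{\R^n} Vu\,\phi\,dx$ and $\int_{\R^n} g(x,u)\phi_\eps\,dx\to\int_{\R^n} g(x,u)\phi\,dx$. Passing to the limit yields the claimed identity for $\phi$.

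Since every step reduces to Hölder's inequality or the dominated convergence theorem on the fixed compact set $K_1$, there is no serious obstacle here; the only point one has to be mindful of is that $\Delta\phi$ need not be continuous, so $\Delta\phi_\eps$ does not converge uniformly and one is forced to argue via $L^{p'}$-convergence of $\Delta\phi_\eps$ against $u\in L^p_{loc}(\R^n)$ (equivalently, via dominated convergence with dominating function $\|\Delta\phi\|_{L^\infty}\,|u|\,1_{K_1}$).
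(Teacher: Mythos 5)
Your proposal is correct and follows essentially the same route as the paper: mollify $\phi$ to get $\phi_k\in C_0^\infty(\R^n)$ with supports in a fixed compact set, test the equation with $\phi_k$, and pass to the limit using the uniform bound $|\Delta\phi_k|\le\|\Delta\phi\|_\infty$ together with $u,Vu,g(\cdot,u)\in L^1_{loc}(\R^n)$. The only (immaterial) difference is that you pass to the limit in the Laplacian term via $L^{p'}$-convergence and H\"older, whereas the paper uses pointwise a.e.\ convergence of $\Delta\phi_k$ and dominated convergence with the dominating function $\|\Delta\phi\|_\infty|u|1_K$.
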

\begin{proof}
  Let $\phi\in W_0^{2,\infty}(\R^n)$ with compact support. By mollification we obtain a
  sequence $\phi_k\in C_0^\infty(\R^n)$ and a compact set $K$ such that
  $\supp(\phi),\supp(\phi_k)\subset K$, $\Delta \phi_k\to \Delta \phi$ pointwise almost
  everywhere in $K$, $|\Delta\phi_k|\leq \|\Delta\phi\|_\infty$ and $\phi_k\to\phi$ uniformly. The
  dominated convergence theorem gives
  \begin{align*}
    \int_{\R^n} g(x,u)\phi \,dx
    &= \int_K g(x,u)\phi \,dx \\
    &= \lim_{k\to \infty} \int_K g(x,u)\phi_k \,dx \\
    &= \lim_{k\to \infty} \int_K u (-\Delta \phi_k+ V(x)\phi_k) \,dx \\
    &= \int_K u (-\Delta \phi+ V(x) \phi) \,dx \\
    &= \int_{\R^n} u (-\Delta \phi+ V(x)\phi) \,dx.
  \end{align*}
\end{proof}
  
  In the following Proposition we verify the assumptions of Proposition \ref{Prop representation
  formula} in order to deduce $u=T_\omega(g_\omega)$.
  
  \begin{proposition}\label{Prop integrability positive solutions}
    Let the assumptions of Theorem \ref{Thm regularity},(1) hold. Then $u\in L^p(\R^n;\omega_0)$ for
    all $\omega_0>0$.
  \end{proposition}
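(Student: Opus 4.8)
The plan is to bootstrap the integrability of $u$ starting from the only a priori information available, namely $u\in L^p_{loc}(\R^n)$, $u\geq 0$, and the weighted integrability we want to reach, $u\in L^p(\R^n;\omega_0)$ for every $\omega_0>0$. Since $u\geq 0$ and $g$ satisfies the two-sided bound \eqref{Gl growth cond I}, we have $g(x,u)\geq -C_3 + C_4 u^p \geq -C_3$, so the distributional equation tested against \emph{nonnegative} test functions yields a one-sided inequality; this is where positivity of $u$ is essential. The first step is to upgrade the class of admissible test functions: by Proposition~\ref{Prop Testing} we may test \eqref{Gl II} against any $\phi\in W_0^{2,\infty}(\R^n)$ with compact support. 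In particular, for a fixed $\omega>0$ we want to use localized multiples of the Green function $G_\omega$, which is locally integrable together with its first derivatives and decays exponentially; the point is that $-\Delta G_\omega + \omega G_\omega = \delta_0$, so such test functions act almost like a fundamental solution.

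The key estimate is obtained as follows. Fix $\omega>0$. For $R>0$ let $\chi_R(x)=\chi(R^{-1}x)$ with $\chi\in C_c^\infty(\R^n)$, $\chi\equiv 1$ near $0$, and consider for $y\in\R^n$ the test function $\phi_{R,y}(x):=\chi_R(x-y)\,G_\omega(x-y)$, suitably mollified near the singularity so that it lies in $W_0^{2,\infty}$ with compact support. Testing \eqref{Gl II} with $\phi_{R,y}\geq 0$ and using $g(x,u)\geq -C_3+C_4u^p$ together with $V\in L^\infty$, one obtains, after controlling the commutator terms involving $\nabla\chi_R$ and $\Delta\chi_R$ (which are supported in an annulus of radius $\sim R$ and contribute a term that is bounded uniformly and vanishes as $R\to\infty$ once we know $u\in L^1_{loc}$ with enough decay — here one argues by first establishing a crude bound and then iterating), an inequality of the shape
$$
  C_4\int_{\R^n} G_\omega(x-y)\,u(x)^p\,dx + (\text{something}) \;\leq\; u(y) + (\|V\|_\infty+\omega)\int_{\R^n} G_\omega(x-y)\,u(x)\,dx + C_3\|G_\omega\|_{L^1}.
$$
Since the left side dominates $\int G_\omega(x-y)u(x)\,dx$ (using $u^p\geq$ const$\cdot u -$ const because $p>1$, or splitting $\{u\le 1\}$ and $\{u>1\}$), we deduce a pointwise bound controlling $\int G_\omega(x-y)u(x)\,dx$, and in fact $u(y)\le T_\omega(g_\omega^-)(y) + \dots$ in a way that feeds the representation formula. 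The cleanest route is: first show $g(\cdot,u)\in L^1_{loc}$ and $Vu\in L^1_{loc}$ so that $g_\omega\in L^1_{loc}$; then show $u = T_\omega(g_\omega)$ locally via the test-function identity above and Proposition~\ref{Prop uniqueness linear}; and finally read off weighted integrability from the explicit exponential decay of $G_\omega$ in \eqref{Gl GreensF asymptotic I}: convolution with $G_\omega$ maps $L^1_{loc}$ functions of at most polynomial growth into $L^1(\R^n;\omega')$ for every $\omega'>0$, because $G_\omega(x-y)e^{-\sqrt{\omega'}|x|}$ is integrable in $x$ uniformly for $y$ in compacts and decays like $e^{-\sqrt{\omega}|x-y|}e^{-\sqrt{\omega'}|x|}$ at infinity.

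Concretely the steps are: (1) invoke Proposition~\ref{Prop Testing} to enlarge the test-function class; (2) use the sign condition $g(x,u)\ge -C_3$ and the localized Green-function test functions to derive, for each $y$, the inequality $u(y)\ge c\int_{B_1(y)} G_\omega(x-y)u(x)^p\,dx - C$, hence $\int_{B_1(y)}u^p\,dx \le C(1+u(y))$ locally, which already gives $u\in L^p_{loc}$ quantitatively but more importantly shows $g(\cdot,u)\in L^1_{loc}$; (3) pass to the limit $R\to\infty$ in the test-function identity using the exponential decay of $G_\omega$ and dominated convergence, getting that $u$ and $T_\omega(g_\omega)$ solve the same linear distributional equation $-\Delta v+\omega v = g_\omega$, and conclude $u=T_\omega(g_\omega)$ a.e.\ by Proposition~\ref{Prop uniqueness linear} (after checking $u-T_\omega(g_\omega)\in L^1(\R^n;\omega)$); (4) estimate $\|T_\omega(g_\omega)\|_{L^p(\R^n;\omega_0)}$ for $\omega_0>0$ using $|g_\omega|\le C(1+u+u^p)$ together with the already-known local bound on $\int_{B_1(y)}u^p$ and the exponential kernel decay, exactly as in the computation in the proof of Proposition~\ref{Prop representation formula}, but starting one regularity level lower.

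The main obstacle is step (2)–(3): justifying the passage to the limit and the absorption of the nonlinear term without circularity. One does not yet know any global integrability of $u$, so the commutator terms $\int(\Delta\chi_R\,G_\omega + 2\nabla\chi_R\cdot\nabla G_\omega)(x-y)\,u(x)\,dx$ must be controlled by a self-improving argument: a first application with a cruder test function (e.g.\ $\chi_R$ itself, or a fixed compactly supported approximation of $G_\omega$) gives $u\in L^1_{loc}$ with a weak global bound, a second application sharpens it to $u\in L^p_{loc}$ with the bound $\int_{B_1(y)}u^p\le C(1+u(y))$, and only then does the exponential decay of $G_\omega$ make the commutator terms vanish in the limit. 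Handling the mild singularity of $G_\omega$ at the origin (so that $\phi_{R,y}$ genuinely lies in $W_0^{2,\infty}$) requires the mollification step and the observation that $\Delta G_\omega = \omega G_\omega - \delta_0$ in the distributional sense, so the mollified test function produces an approximate identity at $y$ converging to $u(y)$ by Lebesgue differentiation — this is the technical heart of the argument, everything else being routine estimates with the kernel bounds \eqref{Gl GreensF asymptotic I}, \eqref{Gl GreensF asymptotic II}.
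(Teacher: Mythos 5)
There is a genuine gap: your argument never produces any \emph{global} growth control on $\int_{B_R}u^p\,dx$, and without that the weighted integrability cannot be reached. The inequality you extract in step (2), $\int_{B_1(y)}u^p\,dx\le C(1+u(y))$, bounds the local $L^p$-mass by the pointwise value $u(y)$, which is completely uncontrolled as $|y|\to\infty$ (a priori $u$ is only in $L^p_{loc}$ and could grow super-exponentially), so it cannot be summed against $e^{-\sqrt{\omega_0}|x|}$. Your proposed cure, a ``self-improving'' bootstrap starting from the plain cutoff $\chi_R$, does not close either: testing with $\chi_R$ and absorbing the linear term only yields a doubling-type inequality of the form $\int_{B_R}u^p\,dx\le \eps\int_{B_{2R}}u^p\,dx+CR^n$, which cannot be iterated to a polynomial bound without an a priori growth rate on $\int_{B_R}u^p$ -- exactly what is missing. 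Steps (3)--(4) are then circular: passing to the limit $R\to\infty$ in the identity with $\chi_R(\cdot-y)G_\omega(\cdot-y)$ requires the annular contributions $\int_{B_{2R}\setminus B_R}u\,dx$ to be $o(e^{\sqrt{\omega}R})$, and the application of Proposition~\ref{Prop uniqueness linear} to $u-T_\omega(g_\omega)$ requires $u\in L^1(\R^n;\omega)$; both are (weaker forms of) the very conclusion of Proposition~\ref{Prop integrability positive solutions}, which is precisely the hypothesis the paper must verify \emph{before} invoking Proposition~\ref{Prop representation formula}, not after.

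The missing idea is a single global test function for which the linear terms can be absorbed with the \emph{same} weight on both sides. The paper takes $\phi_R(x)=\phi(x/R)$ with $\phi=(v-\kappa)1_{\{|x|\le r_0\}}$, $v(r)=J_{\frac{n-2}{2}}(r)r^{\frac{2-n}{2}}$ and $\kappa=v(r_0)<0$ at the first critical point $r_0$; this $\phi_R$ is nonnegative, compactly supported, lies in $W^{2,\infty}_0$ (so Proposition~\ref{Prop Testing} applies), and satisfies $-\Delta\phi_R+V\phi_R\le(\|V\|_{L^\infty}+1)\phi_R$ pointwise a.e. Testing \eqref{Gl II} with $\phi_R$, using $g(x,u)\ge C_4u^p-C_3$ from \eqref{Gl growth cond I}, $u\ge0$, and Young's inequality $u\le\eps u^p+C_\eps$ to absorb the right-hand side into $\frac{C_4}{2}\int u^p\phi_R$, one gets directly $\int_{B_{Rr_0}}u^p\phi_R\,dx\le cR^n$, hence $\int_{B_{\gamma Rr_0}}u^p\,dx\le c_\gamma R^n$, and summation over annuli against $e^{-\sqrt{\omega_0}|x|}$ gives $u\in L^p(\R^n;\omega_0)$. (A high power $\chi_R^m$ of a standard cutoff with $m\ge\frac{2p}{p-1}$ would serve the same purpose; but some device of this kind is indispensable, and your localized Green-function test functions do not provide it.) Only after this polynomial bound is in hand do the representation formula and the kernel-decay estimates you invoke become legitimate.
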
  
  \begin{proof}
    Let $J_{\frac{n-2}{2}}$ denote the Bessel function of the first kind of order $\frac{n-2}{2}$,
    let $v(r):= J_{\frac{n-2}{2}}(r)r^{\frac{2-n}{2}}$. Then $v$ lies
    in $C^\infty(\R)$ and is a classical radially symmetric solution of $-\Delta \psi=\psi$.
    Furthermore, there is $r_0>0$ such that $v$ is strictly decreasing in $[0,r_0)$ and $v'(r_0)=0,
    v(r_0)=:\kappa<0$. For $R>0$ the function $\phi_R$ defined by 
    $$
      \phi_R(x)=\phi(\frac{x}{R})\quad\mbox{ where }\quad \phi(x)= (v(x)-\kappa)\cdot 1_{\{|x|\leq r_0\}} 
    $$
    is positive in $B_{Rr_0}$ with $\supp(\phi_R)=\overline{B}_{Rr_0}$. By
    the choice of $\kappa$ we have $\phi\in C^{1,1}(\R^n)$ and Rademacher's theorem applied to $\partial_{x_i}\phi$,
    $i=1,\ldots,n$ shows that $\phi\in W_0^{2,\infty}(\R^n)$. Moreover, $\phi_R$ satisfies the
    differential equation
  \begin{equation*}
    -\Delta \phi_R + V(x)\phi_R =  \big((V(x)+\frac{1}{R^2})\phi_R -
    \frac{|\kappa|}{R^2}\big)\cdot 1_{\{|x|\leq Rr_0\}}
  \end{equation*}
  pointwise a.e. in $\R^n$. By Proposition \ref{Prop Testing} we may use $\phi_R$ as a test function in \eqref{Gl II}.
  Positivity of $u$ and $-\Delta \phi_R + V(x)\phi_R \leq (\|V\|_{L^\infty(\R^n)}+1) \phi_R$
  almost everywhere for all $R\geq 1$ implies 
  \begin{align} \label{Gl Nonneg 1}
    \int_{\R^n} (-\Delta\phi_R+V(x)\phi_R)u\,dx
    &\leq (\|V\|_{L^\infty(\R^n)}+1) \int_{B_{Rr_0}} \phi_Ru \,dx      \nonumber \\
    &\leq \frac{C_4}{2} \int_{B_{Rr_0}} u^p\phi_R \,dx + c \int_{B_{Rr_0}} \phi_R \,dx    \\   
    &\leq \frac{C_4}{2} \int_{B_{Rr_0}} u^p\phi_R \,dx + cR^n\,.  \nonumber 
  \end{align}
  where $C_4$ is the constant from \eqref{Gl growth cond I}. Here we used that for every $\eps>0$ there exists $C_\eps>0$
  such that $a\leq \eps a^p + C_\eps$ for all $a>0$. From the assumptions on $g$ we get
  \begin{align} \label{Gl Nonneg 2}
    \int_{\R^n} g(x,u)\phi_R \,dx 
    &\geq C_4 \int_{B_{Rr_0}} u^p \phi_R \,dx - C_3 \int_{B_Rr_0} \phi_R \,dx \nonumber \\
    &\geq C_4 \int_{B_{Rr_0}} u^p \phi_R \,dx - c R^n\,.
  \end{align}
  Subtracting \eqref{Gl Nonneg 2} from \eqref{Gl Nonneg 1} we get
  \begin{align}\label{Gl Nonneg 3}
     \frac{C_4}{2}\int_{B_{Rr_0}} u^p \phi_R \,dx     
     \leq c R^n \quad\mbox{ for all }R\geq 1.
  \end{align}   
  
  \smallskip
  
  For a fixed $\gamma\in (0,1)$ the function $\phi_R$ is uniformly
  bounded from below on $B_{\gamma Rr_0}$ so that \eqref{Gl Nonneg 3} implies
  \begin{align} \label{Gl Nonneg 4}
    \int_{B_{\gamma Rr_0}} u^p \,dx
    \leq c_\gamma\, \int_{B_{Rr_0}} u^p\phi_R\,dx
    \leq c_\gamma\, R^n \quad\mbox{ for all }R\geq 1. 
  \end{align} 
  Therefore we obtain the following inequality with $A_k:= \{k\gamma r_0\leq |x| < (k+1)\gamma r_0
  \}$, $k\in \N_0$ and $\omega_0>0$:
  \begin{align*}
    \int_{\R^n} e^{-\sqrt{\omega_0}|x|} u(x)^p\,dx    
    &= \sum_{k=0}^\infty \int_{A_k} e^{-\sqrt{\omega_0}|x|}u(x)^p \,dx \\
    &\leq c_\gamma\,\sum_{k=0}^\infty e^{-\gamma\sqrt{\omega_0} r_0 k } \int_{B_{\gamma r_0 (k+1)}}
    |u(x)|^p\,dx\\
	&\leq c_\gamma\,\Big(1+\sum_{k\geq \frac{1}{\gamma r_0}-1}^\infty
	e^{-\gamma\sqrt{\omega_0} r_0k } (\gamma r_0 (k+1))^n \Big) < \infty.
  \end{align*}     
  Hence, $u\in L^p(\R^n;\omega_0)$ for all $\omega_0>0$.
  \end{proof}
  
  
  \noindent
  {\it Proof of Theorem \ref{Thm regularity},(1):} Let $g$ satisfy \eqref{Gl growth cond I} and let $u\geq 0$ be a distributional solution. 
  Since $W:= V-\Gamma |u|^{p-1} \in L^\infty(\R^n)+L^{\frac{p}{p-1}}_{loc}(\R^n)$ and
  $\frac{p}{p-1}>\frac{n}{2}$ we find that $W$ lies in the local Kato class $K_n^{loc}$ (see
  \cite{sim},p.453) and thus Proposition~\ref{Prop decayToZero} 
  (applied to compact subsets of $\R^n$) gives $u\in L^\infty_{loc}(\R^n)$. From
  Proposition~\ref{Prop integrability positive solutions} we get $u\in L^p(\R^n;\omega)$ for all $\omega>0$ so that Proposition~\ref{Prop representation
  formula} implies $u=T_\omega(g_\omega)$ where $g_\omega\in L^\infty_{loc}(\R^n)\cap
  L^1(\R^n;\omega)$ for all $\omega>0$. Since $(k,q,r)=(1,\infty,\infty)$ is admissible for
  Proposition~\ref{Prop local Mapping properties} we obtain $u\in W_{loc}^{1,\infty}(\R^n)$, in particular 
   $$
     \int_{\R^n} \nabla u\nabla\phi + \omega u\phi\,dx 
     = \int_{\R^n} u(-\Delta\phi+\omega \phi)\,dx
     = \int_{\R^n} g_\omega \phi \,dx,
   $$ 
   for all $\phi\in C_0^\infty(\R^n)$ so that $u$ is a weak solution of the uniformly elliptic
   PDE \eqref{Gl IInew} . From $g_\omega\in L^\infty_{loc}(\R^n)$ we obtain $u\in
   W^{2,q}_{loc}(\R^n)$ for all $q\in [1,\infty)$ by Cald\'{e}ron-Zygmund estimates (cf. Chapter 9
   in Gilbarg, Trudinger \cite{giltru}). The same reasoning as in part (2) shows that $u$ must be a
   strong solution in $\R^n$.
  \qed  
  


\section{Appendix A}

  In the proof of Proposition~\ref{Prop Existence approximate solution} we use the following
  auxiliary lemma.

  \begin{lemma} \label{Lem extension}
  Let $0<c_0<1$ and $\rho\geq 1$ be given. Then for all $p>1$ there exists a radially symmetric
  positive function $u_2\in C^\infty(\R^n\setminus B_\rho)$ such that
  \begin{align}\label{Gl Def w2} 
    \begin{aligned}
      -\Delta u_2 + u_2 &= u_2^p &&\text{in }\R^n\setminus B_\rho \\
      u_2(x) &= c_0 &&\text{for }|x|=\rho \\
      u_2(x) &\to 0 &&\text{exponentially as $|x|\to\infty$}. 
    \end{aligned}    
  \end{align}
  Moreover the following inclusion holds
  $$
    0<v(|x|) \leq u_2(x)\leq c_0\, e^{-\sqrt{1-c_0^{p-1}}(|x|-\rho)}
    \quad\mbox{for all } |x|\geq \rho 
  $$
  where $v(r)= \kappa r^{\frac{2-n}{2}}K_{\frac{n-2}{2}}(r)$. Here $K_{\frac{n-2}{2}}$ denotes
  the modified Bessel function of second kind and $\kappa>0$ is chosen such that $v(\rho)=c_0$.
\end{lemma}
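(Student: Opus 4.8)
The plan is to construct $u_2$ as a solution of the ODE boundary value problem obtained by reducing \eqref{Gl Def w2} to the radial variable $r=|x|$, namely
\begin{equation*}
  u_2'' + \frac{n-1}{r} u_2' - u_2 + u_2^p = 0 \quad (r>\rho), \qquad u_2(\rho)=c_0, \qquad u_2(\infty)=0,
\end{equation*}
and to sandwich it between the explicit sub- and supersolutions $v$ and $w(r):=c_0\,e^{-\sqrt{1-c_0^{p-1}}(r-\rho)}$. First I would check that $w$ is a supersolution: since $0\le w\le c_0<1$ on $[\rho,\infty)$ one has $w^{p-1}\le c_0^{p-1}$, hence $-w'' - \frac{n-1}{r}w' + w - w^p = w\big(1 - w^{p-1}\big) - w'' - \frac{n-1}{r}w' \ge w(1-c_0^{p-1}) - w'' - \frac{n-1}{r}w'$; a direct computation with the exponential shows $w''=(1-c_0^{p-1})w$ and $-\frac{n-1}{r}w'=\frac{(n-1)\sqrt{1-c_0^{p-1}}}{r}w\ge 0$, so the left-hand side is $\ge 0$ and $w$ is a (classical) supersolution with $w(\rho)=c_0$. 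Next, $v(r)=\kappa r^{(2-n)/2}K_{(n-2)/2}(r)$ solves $-\Delta v + v = 0$ exactly (this is the standard radial decaying solution of the modified Helmholtz/Yukawa equation, i.e. a multiple of the Green kernel $G_1$ from \eqref{Gl Def GreensF G}), so $-v''-\frac{n-1}{r}v'+v-v^p = -v^p\le 0$, making $v$ a subsolution; choosing $\kappa>0$ with $v(\rho)=c_0$ is possible since $r\mapsto r^{(2-n)/2}K_{(n-2)/2}(r)$ is positive on $(0,\infty)$. Finally $v\le w$ on $[\rho,\infty)$: both agree (up to the constant) with exponentially decaying profiles, $v(\rho)=w(\rho)=c_0$, and the asymptotics $K_{(n-2)/2}(r)\sim \sqrt{\pi/(2r)}\,e^{-r}$ give $v(r)=O(r^{(1-n)/2}e^{-r})$ which decays strictly faster than $w(r)=c_0 e^{-\sqrt{1-c_0^{p-1}}(r-\rho)}$ since $\sqrt{1-c_0^{p-1}}<1$; a comparison-principle argument for the linear operator $-\Delta+1$ applied to $w-v$ on $\R^n\setminus B_\rho$ (where $w-v\ge 0$ on the boundary, $w-v\to 0$ at infinity, and $(-\Delta+1)(w-v)=(-\Delta+1)w - 0 \ge w^p\ge 0$) makes this rigorous.

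With the ordered pair $v\le w$ of sub-/supersolutions in hand, I would obtain $u_2$ by the classical monotone iteration (method of sub- and supersolutions) on the exterior domain: set $K:=\sup_{[0,c_0]} p\,s^{p-1}=p c_0^{p-1}$ so that $s\mapsto K s - s^p$ is nondecreasing on $[0,c_0]$, and solve iteratively $(-\Delta + 1 + K)u_{k+1} = K u_k + u_k^p$ in $\R^n\setminus B_\rho$ with $u_{k+1}=c_0$ on $\partial B_\rho$ and $u_{k+1}\to 0$ at infinity, starting from $u_0:=w$. The solvability of each linear exterior problem and the decay at infinity follow from the representation via the Green function of $-\Delta+1+K$ on the exterior domain (equivalently, from \cite{pankov2} or standard Agmon-type decay as used elsewhere in the paper), and the maximum principle gives the monotone chain $v\le \cdots\le u_{k+1}\le u_k\le\cdots\le w$. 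The pointwise limit $u_2:=\lim_k u_k$ then solves \eqref{Gl Def w2}, satisfies $v\le u_2\le w$, and is positive; elliptic regularity (bootstrapping Schauder/Calderón–Zygmund estimates, the nonlinearity being smooth in $u$ away from $u=0$ and $u_2$ being bounded away from $0$ on compact subsets by the lower bound $v>0$) upgrades it to $u_2\in C^\infty(\R^n\setminus B_\rho)$, and radial symmetry is inherited since the iteration preserves it. The exponential decay claimed in \eqref{Gl Def w2} is immediate from $u_2\le w$.

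The main obstacle I anticipate is not the iteration itself but making the two boundary conditions at infinity compatible with the comparison principle on the unbounded domain $\R^n\setminus B_\rho$: one must be careful that the maximum principle still applies (it does, because $-\Delta+1+K$ has positive zeroth-order coefficient and all functions involved tend to $0$ at infinity, so no boundary term at infinity is lost), and that each linear problem in the iteration genuinely has the prescribed exponential decay rather than merely being bounded — this is where the explicit Green kernel $G_1$ and its decay \eqref{Gl GreensF asymptotic I} do the work. A secondary technical point is verifying $v\le w$ sharply enough near $r=\rho$; here equality of boundary values plus the strict ordering of decay rates, combined with the comparison principle for $-\Delta+1$ as sketched above, closes the gap without any delicate computation.
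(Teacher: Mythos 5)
Your proposal is essentially correct, but it follows a genuinely different route from the paper. The paper works entirely at the level of the radial ODE: it solves boundary value problems on the finite intervals $(\rho,R)$ with the \emph{constant} $c_0$ as supersolution and $v$ as subsolution, shows each solution $w_{2,R}$ is decreasing, passes to the limit $R\to\infty$ by Arzel\`a--Ascoli, identifies the limit at infinity as $0$ by a translation/test-function argument, and only at the very end obtains the bound $u_2\leq c_0e^{-\sqrt{1-c_0^{p-1}}(r-\rho)}$ by a separate comparison of $\tilde u_2$ with that exponential. You instead promote the exponential itself to the role of supersolution on the unbounded exterior domain (your computation $w''=(1-c_0^{p-1})w$, $-\tfrac{n-1}{r}w'\geq 0$, $w^p\leq c_0^{p-1}w$ is exactly the inequality the paper uses in its last step), pair it with $v$ as subsolution, verify $v\leq w$ by a maximum principle for $-\Delta+1$, and run a monotone iteration directly on $\R^n\setminus B_\rho$. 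Your approach buys the enclosure $v\leq u_2\leq c_0e^{-\sqrt{1-c_0^{p-1}}(|x|-\rho)}$ and the decay in one stroke, at the price of having to justify solvability (with decay) of the linear exterior problems and the comparison principle on an unbounded domain --- points you correctly flag and which do go through, since the zeroth-order coefficient is positive and all iterates are dominated by the exponentially decaying $w$, so the right-hand sides decay and no boundary contribution at infinity is lost; the paper's finite-interval strategy avoids these issues entirely but needs the extra Arzel\`a--Ascoli limit and the separate identification of $\tilde u_{2,\infty}=0$. Two small remarks: the shift $K=pc_0^{p-1}$ is superfluous here (the map $s\mapsto s^p$ is already nondecreasing on $[0,c_0]$, and your phrase ``$Ks-s^p$ nondecreasing'' should in any case read $Ks+s^p$ for the iteration as you set it up), and your final regularity step should, as you indicate, use the strict lower bound $u_2\geq v>0$ on compact sets so that the bootstrap to $C^\infty$ is legitimate for non-integer $p$.
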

\begin{proof}   
  We first use the method of sub- and supersolutions to find a solution $w_{2,R}$ of the
  following auxiliary elliptic ODE boundary value problem
  \begin{align}\label{Gl Approx ODE}
    -w_{2,R}'' - \frac{n-1}{r}w_{2,R}' + w_{2,R} &= w_{2,R}^p \qquad\text{in }(\rho,R),\\
    w_{2,R}(\rho) = c_0, \quad w_{2,R}(R) &= v(R) \notag
  \end{align}
  for any given $R>\rho$. As a supersolution  of \eqref{Gl Approx ODE} we may take the constant
  function $c_0$ since $c_0\geq c_0^p$ and $c_0=v(\rho)>v(R)$ using the fact that $v$ is strictly
  decreasing. Since $v$ is positive and satisfies the boundary conditions as well as
  \begin{displaymath}
    -v''(r)-\frac{n-1}{r}v'(r)+v(r)=0 \qquad\text{in }(\rho,R)
  \end{displaymath}
  we may choose $v$ as a subsolution.
  Hence the method of sub- and supersolutions (cf. \cite{walter}, \S 16) applies
  and produces a classical solution $w_{2,R}$ of \eqref{Gl Approx ODE} with the additional property
  \begin{align}\label{Gl enclosure w2R}
    0<v(r)\leq w_{2,R}(r)\leq c_0<1 \qquad\text{for } r>\rho.
  \end{align}
  The function $w_{2,R}$ cannot attain a local maximum at any $r^*\in (\rho,R)$ since
  in this case we would have $0\leq -w_{2,R}''(r^*) = w_{2,R}(r^*) (w_{2,R}(r^*)^{p-1}-1)$ contradicting
  \eqref{Gl enclosure w2R}. This implies that $w_{2,R}$ is decreasing since otherwise
  there would be $\rho\leq r_1<r_2<R$ such that $w_{2,R}(r_1)<w_{2,R}(r_2)$. Using that there is no
  interior local maximum this would lead to $w_{2,R}(r_1)<w_{2,R}(r_2)\leq w_{2,R}(R)=v(R)$ in
  contradiction to $w_{2,R}(r_1) \geq v(r_1)> v(R)$ by \eqref{Gl enclosure w2R} and
  strict monotonicity of $v$.

  \medskip

  Since $w_{2,R}$ is decreasing we have $w_{2,R}'\leq 0$ and from \eqref{Gl Approx ODE} and
  $w_{2,R}<1$ we get $w_{2,R}''> 0$,  hence
  \begin{equation}\label{Gl enclosure w2R prime}
    0\geq w_{2,R}'(r)\geq w_{2,R}'(\rho)\geq v'(\rho) \quad \mbox{ for all } r\in [\rho,R].
  \end{equation}
  From \eqref{Gl Approx ODE}, \eqref{Gl enclosure w2R} and \eqref{Gl enclosure w2R prime} it follows
  that for all $R_0>\rho$ the families $(w_{2,R}')_{R>R_0}$, $(w_{2,R}'')_{R>R_0}$ are uniformly bounded with respect to $R$.
  By the Arzel\`{a}-Ascoli theorem, there is a sequence $(w_{2,R_j})$ with
  $\lim_{j\to\infty}R_j=\infty$ which converges uniformly along with its first derivatives on every compact subset of $[\rho,\infty)$ to some $\tilde u_2\in C^1([\rho,\infty))$ which satisfies the enclosure $0<v\leq \tilde u_2 \leq c_0<1$. Writing
  \begin{equation*}
    w_{2,R}(r)
    = c_0 + \frac{\rho}{2-n}\big( (\frac{\rho}{r})^{n-2} - 1\big) w_{2,R}'(\rho)
      + \int_\rho^r \int_\rho^s (\frac{t}{s})^{n-1} [w_{2,R}(t)-w_{2,R}(t)^{2p-1} ]\,dt\,ds
  \end{equation*}
  we obtain that $\tilde u_2=\lim_{R\to\infty} w_{2,R}$ belongs to $C^2([\rho,\infty))$ and solves the
  initial value problem
  \begin{align}\label{Gl Eq w2}
    -\tilde u_2'' - \frac{n-1}{r}\tilde u_2' + \tilde u_2 &= \tilde u_2^p \quad\text{in
    }(\rho,\infty),\qquad \tilde u_2(\rho) = c_0 
  \end{align}
  in the classical sense. In particular, $u_2(x):= \tilde u_2(|x|)$ defines a radially
  symmetric classical solution of problem \eqref{Gl Def w2} on $\R^n\setminus B_\rho$. It remains to show that
  $\tilde u_2$ decays exponentially at infinity.

  \medskip

  To this end we test \eqref{Gl Eq w2} with functions $\phi_k(r):= \phi(r-k)$ for $k>0$
  and $\phi\in C_0^\infty(\rho,\infty)$ arbitrary. Since $\tilde u_2\in C^2([\rho,\infty))$ is a
  decreasing function it has a limit $\tilde u_{2,\infty} := \lim_{r\to\infty} \tilde u_2(r)$ which
  satisfies $0\leq \tilde u_{2,\infty}<c_0<1$. Therefore the dominated convergence theorem implies
  \begin{align*}
    0
    &= \lim_{k\to\infty} \int_\rho^\infty \tilde u_2(r) \big(-\phi_k''(r)- \frac{n-1}{r}\phi_k'(r)
      + \phi_k(r)-\tilde u_2(r)^{p-1}\phi_k(r)\big)\,dr \\
    &= \lim_{k\to\infty} \int_\rho^\infty \tilde u_2(r+k) \big(-\phi''(r) -\frac{n-1}{r+k}\phi'(r) +
       \phi(r)-\tilde u_2(r+k)^{p-1}\phi(r)\big)\,dr \\
    &= \int_\rho^\infty \tilde u_{2,\infty} \big(-\phi''(r) + \phi(r)-\tilde u_{2,\infty}^{p-1}\phi(r)\big)\,dr \\
    &= \tilde u_{2,\infty}(1-\tilde u_{2,\infty}^{p-1}) \, \int_\rho^\infty \phi(r)\,dr
  \end{align*}
  and thus, $\phi$ being an arbitrary testfunction, we see that necessarily $\tilde u_{2,\infty}=0$.
    
  \medskip 
  
  Finally we to show $\tilde u_2\leq z$ where $z(r):= c_0 e^{-\sqrt{1-c_0^{p-1}}(r-\rho)}$.
  From $z''(r)=(1-c_0^{p-1})z$, $z(\rho)=c_0$  
  and $0<\tilde u_2\leq c_0, \tilde u_2'\leq 0$ we get
  \begin{align*}
    (\tilde u_2-z)''(r)
    &= -\frac{n-1}{r}\tilde u_2'(r) + \tilde u_2(r)(1-\tilde u_2(r)^{p-1})-(1-c_0^{p-1})z(r) \\
    &\geq (1-c_0^{p-1})(\tilde u_2-z)(r) \quad\mbox{ for all }r\geq \rho.
  \end{align*}
  which proves that $\tilde u_2-z$ cannot have any positive interior local maximum. Hence,
  \begin{displaymath}
    (\tilde u_2-z)(r)\leq \max\{0,(\tilde u_2-z)(\rho),(\tilde u_2-z)(\infty)\}=0
     \quad\mbox{ for all }r\geq \rho
  \end{displaymath}
  and the result follows.
\end{proof}

{\em Proof of Proposition~\ref{Prop Existence approximate solution}:}
  Let $n\geq 3$, choose $\rho$ such that the inequalities 
  $$
    \rho \geq 1, \qquad 
    \rho\geq \sqrt{\frac{4}{3}} \cdot \max \{ c_{n,q}^{\frac{q-1}{2}} : \frac{n}{n-2}\leq q\leq
    \frac{n+2}{n-2} \}
  $$ 
  hold true where $c_{n,p}$ is given by \eqref{Gl Def u1}. Then, given any $p\in
  (\frac{n}{n-2},\frac{n+2}{n-2})$ the choice $c_0 := c_{n,p}\rho^{-\frac{2}{p-1}}$ implies $0<c_0
  \leq c_{n,p}$ and $c_0^{p-1}\leq \frac{3}{4}$.
  
  \medskip
  
  Let now $u_2$ be given by Lemma \ref{Lem extension}, $u_1(x):=c_{n,p}|x|^{-\frac{2}{p-1}}$.
  Then the function $u_0$ defined in \eqref{Gl App sol} is positive radially
  symmetric and satisfies (i),(ii) by the choice of $u_1,u_2$.
  Moreover, $u_0\in C(\R^n\setminus\{0\})$ implies  $u_0\in H^1(\R^n\setminus B_\delta)$ for all
  $\delta>0$ and $u_1\in C^2(\overline{B}_{\rho}\setminus \{0\}),u_2\in C^2(\R^n\setminus B_\rho)$ gives (iii). 
  Property (iv) follows from the definition of $u_1$. 
  The explicit formula for $u_1$ and the enclosure of $u_2$ given by Lemma \ref{Lem extension} yield
  $$
    |\partial_\nu^+ u_0(x)| 
    = |\partial_\nu u_1(x)|
    \leq c\, c_{n,p}, \qquad
    |\partial_\nu^- u_0(x)| 
    = |\partial_\nu u_2(x)|
    \leq c\, c_{n,p}\quad (x\in \partial B_\rho)
  $$
  and we obtain (v). By the choice of $\rho$ we have $c_0^{p-1}\leq \frac{3}{4}$ so that Lemma
  \ref{Lem extension} gives the upper bound for $u_2(x) \leq c_0\, e^{-\frac{|x|-\rho}{2}}$ which
  shows (vi) and finishes the proof of Proposition~\ref{Prop Existence approximate solution}. \qed


\section{Appendix B}

The following proposition sums up two results from \cite{sim}.
  
  \begin{proposition}\label{Prop decayToZero}
    Let $\Omega=\R^n\setminus B_R$ for some $R\geq 0$. Let $W\in K_n(\Omega)$
    and assume $-\Delta u+Wu=0$ in $\Omega$ in the distributional sense where $u, Wu\in
    L^1_{loc}(\Omega)$. Then $u$ equals almost everywhere a continuous function in $\Omega$. 
    If in addition  $u\in L^q(\Omega)$ for some $q\in [1,\infty)$ then
    $u(x)\to 0$ as $|x|\to\infty$. 
  \end{proposition}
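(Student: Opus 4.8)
The plan is to derive both statements from two local facts about Schr\"odinger operators with Kato-class potentials collected in \cite{sim}: (a) distributional solutions of $-\Delta u + Wu = 0$ with $W\in K_n$ possess continuous representatives, and (b) a \emph{subsolution estimate}, namely that a nonnegative distributional subsolution of $-\Delta\psi+W\psi=0$ on a ball is controlled at the centre by its integral over the ball, with a constant depending only on the Kato norm of $W$ over unit balls. The continuity is a purely local assertion and reduces to balls contained in $\Omega$; the decay follows by inserting the hypothesis $u\in L^q(\Omega)$ into the subsolution estimate on unit balls $B_1(x)$ that escape to infinity.

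\emph{Continuity.} Fix a ball $B$ with $\overline{B}\subset\Omega$ and a slightly larger concentric ball $B'$ with $\overline{B'}\subset\Omega$. Then $W 1_{B'}\in K_n$ with $\|W1_{B'}\|_{K_n}\le\|W1_\Omega\|_{K_n}$, and $Wu\in L^1(B')$. Writing $u = N_n\ast(-Wu\,1_{B'}) + h$, where $N_n$ denotes the fundamental solution of $-\Delta$ on $\R^n$ and $h$ is harmonic, hence smooth, on $B$, the continuity of $u$ on $B$ is equivalent to the continuity of the potential $N_n\ast(Wu\,1_{B'})$. For a mere $L^1$ density this is false in general, and here the Kato hypothesis on $W$ is exactly what is used: since the integral operator with kernel $|x-y|^{2-n}\,|W(y)|\,1_{B'}(y)$ is bounded on $L^p(B')$ for every $p\in[1,\infty]$, one bootstraps $u$ from $L^1_{\mathrm{loc}}$ to $L^p_{\mathrm{loc}}$ with $p$ arbitrarily large and then reads off continuity of the potential. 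This is precisely the local regularity statement of \cite{sim}; as $B$ was arbitrary, $u$ coincides almost everywhere with a function continuous on $\Omega$, still denoted $u$. (For $n=1,2$ the claim is immediate or easier.)

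\emph{Decay.} Let $C=C\bigl(n,\|W1_\Omega\|_{K_n}\bigr)$ be the constant in the subsolution estimate from \cite{sim}, which is uniform in the centre of the ball since it depends only on $\sup_{x}\|W1_{B_1(x)}\|_{K_n}\le\|W1_\Omega\|_{K_n}$. The function $\psi:=|u|$ is a nonnegative subsolution of $-\Delta\psi+W\psi=0$ on $\Omega$: because $\Delta u = Wu\in L^1_{\mathrm{loc}}(\Omega)$, Kato's inequality gives $\Delta|u|\ge\operatorname{sgn}(u)\,\Delta u = W|u|$, i.e.\ $-\Delta|u|+W|u|\le 0$. Hence for every $x$ with $|x|\ge R+2$, so that $B_1(x)\subset\Omega$,
$$
  |u(x)|\ \le\ C\int_{B_1(x)}|u(y)|\,dy\ \le\ C\,|B_1|^{\,1-1/q}\Bigl(\int_{B_1(x)}|u(y)|^q\,dy\Bigr)^{1/q}
$$
by H\"older's inequality. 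Since $u\in L^q(\Omega)$ with $q<\infty$ and $B_1(x)\subset\{|y|\ge|x|-1\}$, the right-hand side tends to $0$ as $|x|\to\infty$, so $u(x)\to 0$.

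The step I expect to be the genuine obstacle is the continuity assertion: extracting from \cite{sim} that an only-$L^1_{\mathrm{loc}}$ distributional solution with a potential that is merely Kato (not bounded, not in $L^q$ with $q>\tfrac n2$) has a continuous representative requires the full bootstrap through the Kato condition, whereas once the uniform subsolution estimate is available the decay is essentially one line. The only point needing attention there is the uniformity of the constant in the subsolution estimate over the centre of the ball, which is what allows the same $C$ to be used for all large $x$.
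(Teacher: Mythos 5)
Your argument is correct and follows essentially the same route as the paper: both rely on the local results of \cite{sim} (continuity of solutions with Kato-class potentials, and the pointwise bound $|u(x)|\leq C\int_{B_1(x)}|u|$ with a constant uniform in the centre), and then conclude decay from $u\in L^q(\Omega)$ via H\"older's inequality on unit balls escaping to infinity. The extra detail you supply (Newtonian-potential decomposition for continuity, Kato's inequality to view $|u|$ as a subsolution) merely sketches the interior of the cited theorems and does not change the structure of the proof.
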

  \begin{proof}
    Continuity of $u$ follows from \cite{sim}, Theorem C.1.1. Moreover \cite{sim}, Theorem
    C.1.2. implies that for almost all $x\in\Omega$ with $\dist(x,\partial\Omega)>1$ we have 
    \begin{align*}
      |u(x)|
      \leq C(\|W_- \|_{K_n(B_1(x))}) \int_{B_1(x)} |u(y)|\,dy  
      \leq C(\|W_- \|_{K_n(\Omega)}) \int_{B_1(x)} |u(y)|\,dy.   
    \end{align*}    
    Now if $u\in L^p(\Omega)$ we have $\lim_{|x|\to\infty} \int_{B_1(x)} |u(y)|^p\,dy=0$
    and thus H\"{o}lder's inequality implies $\lim_{|x|\to\infty} \int_{B_1(x)} |u(y)|\,dy=0$.
    Hence the result.
  \end{proof} 
    
  \begin{proposition}\label{Prop exponentialdecay}
    Let $\Omega=\R^n\setminus B_R$ for some $R\geq 0$. Let $W\in L^s(\Omega)+L^\infty(\Omega)$
    for some $s>\frac{n}{2}$, and assume $0<\Sigma:= \inf \sigma_{ess}(-\Delta+W(x))$. If $u\in
    H^1_{loc}(\Omega)\cap L^q(\Omega)$ for some $q\in [2,\frac{2n}{(n-2)_+})$ is a weak solution of $-\Delta u + Wu=0$ in $\Omega$ then for all $\mu \in (0,\sqrt{\Sigma})$ there is a constant $C_\mu>0$ such that 
    $$
      |u(x)| \leq C_\mu e^{-\mu|x|}\qquad \text{for all } x\in\Omega \text{ with
      }\dist(x,\partial\Omega)>1. 
    $$
  \end{proposition}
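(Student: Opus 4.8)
The plan is to prove the exponential decay estimate by an adapted version of Agmon's method, following the references \cite{hislopsigal}, \cite{hundertmark}, \cite{hunzikersigal} mentioned in the introduction. The core idea is to introduce an Agmon-type weight $e^{\mu\langle x\rangle}$ (with a suitable cut-off near $\partial\Omega$ and a regularization to keep it bounded and Lipschitz) and show that the weighted function $e_\sigma u$ remains in $H^1$ with controlled norm as the regularization parameter $\sigma\to 0$; letting $\sigma\to 0$ then yields $e^{\mu|x|}u\in H^1$, and a local elliptic (De Giorgi--Nash--Moser / Kato-class) bound upgrades this to the pointwise estimate.

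Concretely, I would proceed as follows. \textbf{Step 1 (Setup of the weight).} Fix $\mu\in(0,\sqrt{\Sigma})$ and choose $\mu<\mu'<\sqrt{\Sigma}$. For small $\sigma>0$ define a bounded Lipschitz function $\phi_\sigma(x)$ that agrees with $\mu'|x|$ for $|x|$ not too large, is cut off to be constant for large $|x|$, and vanishes near $\partial\Omega$; then $|\nabla\phi_\sigma|\le\mu'$ a.e. \textbf{Step 2 (Weighted energy identity).} Since $\Sigma=\inf\sigma_{ess}(-\Delta+W)>0$, there is a compact set $K\subset\Omega$ (equivalently a large radius) and a constant so that $\int(|\nabla v|^2+Wv^2)\ge (\Sigma-\delta)\int v^2$ for all $v\in H^1_0(\Omega\setminus K)$; this follows from Persson's characterization of the bottom of the essential spectrum together with $W\in L^s+L^\infty$, $s>n/2$ (which makes the quadratic form well-behaved via Sobolev/interpolation). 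Test the weak equation $-\Delta u+Wu=0$ with $v=\eta^2 e^{2\phi_\sigma}u$ for a cut-off $\eta$ supported away from $K$ and away from $\partial\Omega$, and use the standard ground-state-substitution computation $\int|\nabla(\eta e^{\phi_\sigma}u)|^2 = \int|\nabla\phi_\sigma|^2\eta^2e^{2\phi_\sigma}u^2 + \int\nabla u\cdot\nabla(\eta^2e^{2\phi_\sigma}u) + (\text{terms with }\nabla\eta)$. Combining with the spectral lower bound and $|\nabla\phi_\sigma|^2\le(\mu')^2<\Sigma$, the term $\int|\nabla\phi_\sigma|^2\eta^2e^{2\phi_\sigma}u^2$ is absorbed, leaving $(\Sigma-(\mu')^2-\delta)\int\eta^2e^{2\phi_\sigma}u^2$ bounded by boundary-layer terms supported where $\eta$ or $\nabla\eta$ lives, i.e.\ on a fixed compact annulus where $e^{\phi_\sigma}$ and $u\in L^2_{loc}$ are under control. \textbf{Step 3 (Uniform bound and limit).} The resulting estimate is uniform in $\sigma$, so Fatou/monotone convergence gives $e^{\mu'\langle x\rangle}u\in L^2(\Omega')$ and in fact $e^{\mu'\langle x\rangle}u\in H^1$ on $\Omega'$ for $\Omega'=\Omega\setminus(\text{larger ball})$. \textbf{Step 4 (Pointwise upgrade).} The function $w:=e^{\mu'\langle x\rangle}u$ satisfies a perturbed equation $-\Delta w + \tilde W w = 0$ with $\tilde W\in L^s_{loc}+L^\infty\subset K_n^{loc}$, so by the Kato-class subsolution estimate (Proposition~\ref{Prop decayToZero}, or \cite{sim}, Theorem C.1.2) one has $|w(x)|\le C\int_{B_1(x)}|w(y)|\,dy$ for $\dist(x,\partial\Omega)>1$; since $w\in L^2$, the right-hand side tends to $0$, and in particular is bounded. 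Therefore $|u(x)|\le Ce^{-\mu'\langle x\rangle}\le C_\mu e^{-\mu|x|}$, which is the claim (the gap $\mu'>\mu$ is only used to absorb the slowly-varying factor $\langle x\rangle$ versus $|x|$).

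The main obstacle is \textbf{Step 2}: making the integration-by-parts rigorous for a merely $H^1_{loc}$ weak solution with a Kato-type potential, and correctly extracting the coercivity constant $\Sigma-(\mu')^2>0$ uniformly in the regularization. One has to (a) justify that $\eta^2e^{2\phi_\sigma}u$ is an admissible test function (it is, since $\phi_\sigma$ is bounded Lipschitz and $u\in H^1_{loc}$), (b) control the potential term $\int W\eta^2e^{2\phi_\sigma}u^2$ using $W\in L^s+L^\infty$ and the Sobolev embedding of $H^1$ into $L^{2s/(s-1)}$ so that it can be combined with the Laplacian term rather than treated separately, and (c) invoke Persson's theorem to convert ``$\inf\sigma_{ess}\ge\Sigma$'' into the form-coercivity estimate on the exterior of a large ball. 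Once these technical points are in place, the rest is the standard Agmon absorption argument and a routine elliptic regularization passage to the limit, so I would relegate the full computations of Steps 2--4 to keep the exposition focused on the weight construction and the spectral input.
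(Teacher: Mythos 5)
Your overall strategy is the same as the paper's (regularized Agmon weight, Persson's theorem for coercivity outside a large ball, absorption of $|\nabla\xi|^2\le(\mu')^2<\Sigma$, then a local elliptic estimate to pass from weighted $L^2$ to pointwise decay), but there is a genuine gap in how you handle the truncation at infinity, and it is exactly the place where the hypothesis $q<\frac{2n}{(n-2)_+}$ must enter --- a hypothesis your argument never uses. In Step 2 you claim the error terms are ``supported where $\eta$ or $\nabla\eta$ lives, i.e.\ on a fixed compact annulus''. That is only true of the inner cut-off. If your test function is genuinely compactly supported (as it must be to be admissible, since for $q>2$ a weak solution need not lie in $L^2$ near infinity, so $\eta^2e^{2\phi_\sigma}u$ with $\eta\equiv 1$ at infinity is not obviously in $H^1_0(\Omega)$ and the weighted integrals may a priori diverge), then there is an outer layer, on an annulus of radius $\rho\to\infty$, carrying a term of the form $\int|\nabla\chi_\rho|^2|u|^2e^{2\phi_\sigma}\,dx$. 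For $q=2$ this is handled by dominated convergence, but for $2<q<\frac{2n}{(n-2)_+}$ one needs the H\"older estimate the paper uses after \eqref{kritisch}: $\int_{\{\rho\le|x|\le2\rho\}}|\nabla\chi_\rho|^{\frac{2q}{q-2}}\,dx\le c\,\rho^{\,n-\frac{2q}{q-2}}\to0$, which is where the upper bound on $q$ is used. Without this step your ``uniform in $\sigma$'' estimate in Step 3 is not established for $q>2$, so the proof as written fails in that range.

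A second, smaller flaw is in Step 4: the conjugated function $w=e^{\mu'\langle x\rangle}u$ does \emph{not} satisfy an equation of the form $-\Delta w+\tilde W w=0$; the conjugation produces a first-order drift term $2\nabla\psi\cdot\nabla w$, so Simon's Kato-class subsolution estimate (and Proposition~\ref{Prop decayToZero}) does not apply to $w$ as you claim. The repair is easy and is what the paper does: apply the local boundedness/Harnack estimate to $u$ itself on unit balls, $\|u\|_{L^\infty(B_1(z))}\le C\|u\|_{L^2(B_2(z))}$ with $C$ uniform in $z$ because $W\in L^s(\Omega)+L^\infty(\Omega)$, and then multiply by $e^{\mu|z|}$, using the already established bound $\|ue^{\mu|\cdot|}\|_{L^2(\R^n\setminus B_{2r})}<\infty$ to conclude $|u(x)|\le C_\mu e^{-\mu|x|}$ away from $\partial\Omega$; near the inner boundary one invokes boundedness from Proposition~\ref{Prop decayToZero}. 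With these two repairs your argument coincides with the paper's proof.
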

  \begin{proof}    
    {\it 1st step: Proof of exponential integrability} 
            
    Let $\mu \in (0,\sqrt{\Sigma})$ be arbitrary and let $\chi\in C^\infty(\R^n)$ such that
    $\chi|_{B_1}\equiv 0$ and $\chi|_{B_2^c} \equiv 1$.  
    Let $\chi_s(x)= \chi(s^{-1}x)$ for $x\in\R^n$ and $s>0$. For $\rho>r>R$ we define the function     
    $$
      \chi_{r,\rho}:= \chi_r \cdot (1-\chi_\rho).
    $$    
    Notice that the support of $\chi_{r,\rho}$ is contained in the annulus
    $\overline{B}_{2\rho}\setminus B_r$ and $\chi_{r,\rho}\equiv  \chi_r$ on
    $\overline{B}_\rho$.    
    For $\sigma>0$ we define 
    $$
      \phi = \xi^2 u \quad\mbox{ where }\quad \xi(x) =
       \chi_{r,\rho}(x)e^{\frac{\mu|x|}{1+\sigma|x|}}. 
    $$     
    Since $u\in H^1_{loc}(\Omega)$ is a weak solution of \eqref{Gl I} in $\Omega$
    and $\supp(\chi_{r,\rho})\subset \overline{B}_{2\rho}\setminus B_r$ we have $\phi\in
    H_0^1(\Omega)$ and
    \begin{align*}
      0 
      &= \int_{\R^n} \nabla u\nabla \phi + W u\phi \,dx \\
      &= \int_{\R^n} |\nabla (\xi u)|^2 + W |\xi u|^2 - 
          |\nabla \xi|^2|u|^2 \,dx.    
  \end{align*}
  Now fix a $\delta\in (0,\frac{1}{2}(\Sigma-\mu^2))$.
  From $|\nabla \xi|\leq e^{\frac{\mu|x|}{1+\sigma|x|}}(|\nabla\chi_{r,\rho}|+\mu |\chi_{r,\rho}|)$ we infer  
  $$
    |\nabla \xi|^2\leq  (\mu^2+\delta) |\chi_{r,\rho}|^2 e^{\frac{2\mu|x|}{1+\sigma|x|}} + 
    (1+\mu^2\delta^{-1}) |\nabla\chi_{r,\rho}|^2 e^{\frac{2\mu|x|}{1+\sigma|x|}}.
  $$
  Hence,
  \begin{align} \label{Gl Exp decay I}
    0     
    &\geq \int_{\R^n} |\nabla (\xi u)|^2 + W |\xi u|^2 \,dx \\    
    &- (\mu^2+\delta) \int_{\R^n} |\chi_{r,\rho}|^2|u|^2e^{\frac{2\mu|x|}{1+\sigma|x|}} \,dx 
     -  (1+\mu^2\delta^{-1})\int_{\R^n} 
      |\nabla \chi_{r,\rho}|^2|u|^2e^{\frac{2\mu|x|}{1+\sigma|x|}} \,dx. \nonumber
  \end{align} 
  In view of $\inf \sigma_{ess}(W)\geq \Sigma$ and Persson's Theorem (cf. \cite{hislopsigal},
  Theorem 14.11.) we may choose $r>0$ so large that for all $\rho>r,\sigma>0$ the
  following inequality holds
  \begin{align} \label{Gl Exp decay II}
     \int_{\R^n} |\nabla (\xi u)|^2 +  W|\xi u|^2
     &\geq (\Sigma-\delta) \int_{\R^n} |\xi u|^2\,dx \\
     &= (\Sigma-\delta) \int_{\R^n} |\chi_{r,\rho}|^2 |u|^2e^{\frac{2\mu|x|}{1+\sigma|x|}}\,dx.
     \nonumber
  \end{align}
  From \eqref{Gl Exp decay I} and \eqref{Gl Exp decay II} we get for all $\rho>r,\sigma>0$
  \begin{equation}
    \int_{\R^n} \chi_{r,\rho}^2 |u|^2 e^{\frac{2\mu|x|}{1+\sigma|x|}}\,dx
    \leq \frac{1+\mu^2\delta^{-1}}{\Sigma-\mu^2-2\delta} \int_{\R^n} |\nabla \chi_{r,\rho}|^2|u|^2
    e^{\frac{2\mu|x|}{1+\sigma|x|}}\,dx.
  \label{kritisch}
  \end{equation}
  We want to take the limit $\rho\to \infty$. In the integral on the left-hand side of \eqref{kritisch} this can
  be done by the monotone convergence theorem. If $q=2$ then the right-hand side of \eqref{kritisch}
  can be treated by the dominated convergence theorem. In the case $2<q<\frac{2n}{(n-2)_+}$ notice that 
  \begin{align*}
    \int_{\R^n} (|\nabla \chi_{r,\rho}|^2-|\nabla\chi_r|^2)^{\frac{q}{q-2}}\,dx
    = \int_{\{\rho\leq |x|\leq 2\rho\}} |\nabla \chi_{\rho}|^{{\frac{2q}{q-2}}}\,dx 
    \leq \|\nabla \chi\|_\infty \rho^{n-{\frac{2q}{q-2}}}\to 0\quad\mbox{ as }\rho\to\infty.
  \end{align*}
  Hence \eqref{kritisch} holds with $\chi_{r,\rho}$ replaced by $\chi_r$.   
   Taking the limit $\sigma\to 0$ we obtain 
  $$ 
    \int_{\R^n} \chi_r^2 |u|^2 e^{2\mu|x|}\,dx \leq
    \frac{1+\mu^2\delta^{-1}}{\Sigma-\mu^2-2\delta} \int_{\R^n} |\nabla \chi_r|^2 |u|^2 e^{2\mu|x|}\,dx < \infty. 
  $$
  The right-hand side is finite since $\nabla \chi_r$ has compact support.
  Hence, $\chi_r u e^{\mu|x|}\in L^2(\R^n)$ and thus $u e^{\mu|x|}\in L^2(\R^n\setminus B_{2r})$.
  
  \medskip
  
  {\it 2nd step: Pointwise exponential decay} 
          
    Since $u$ is a weak solution of $-\Delta u + W(x)u
    = 0$ in $\Omega$ the Harnack inequality (cf. \cite{hanlin}, Theorem 4.1) implies that there is
    positive constant $C=C(\|W\|_{L^s(B_2(z))})$ such that 
    \begin{align}\label{Gl Est pointwiseexpdec}      
      \|u\|_{L^\infty(B_1(z))}
      &\leq C(\|W\|_{L^s(B_2(z))}) \|u\|_{L^2(B_2(z))}. 
    \end{align}    
    for all $z\in\R^n$ with $|z|>2r+2$.
    Since $W\in L^s(\Omega)+L^\infty(\Omega)$ the constant $C$ in \eqref{Gl Est pointwiseexpdec} is
    w.l.o.g. independent of $z$. Hence, we get
  \begin{align*}
    \|u e^{\mu|\cdot|}\|_{L^\infty(B_1(z))}
    &\leq \|u\|_{L^\infty(B_1(z))} \|e^{\mu|\cdot|}\|_{L^\infty(B_1(z))} \\
    &\leq C  \|u\|_{L^2(B_2(z))} e^{\mu(|z|+1)}\\
    &\leq C  \|u e^{\mu|\cdot|}\|_{L^2(B_2(z))} e^{-\mu(|z|-2)} e^{\mu(|z|+1)}\\ 
    &\leq Ce^{3\mu} \|u e^{\mu|\cdot|}\|_{L^2(\R^n\setminus B_r)}=: C_\mu.
  \end{align*}
  for $|z|>2r+2$ and thus $|u(x)|\leq C_\mu e^{-\mu|x|}$ for 
  $|x|>2r+1$. Moreover by Proposition~\ref{Prop decayToZero} $u$ is bounded outside a neigbhourhood of
  $\partial\Omega$ and thus
  $$
      |u(x)| \leq C_\mu e^{-\mu|x|}\qquad \text{for all } x\in\Omega \text{ with
      }\dist(x,\partial\Omega)>1. 
  $$
\end{proof}
    
\section{Acknowledgements}

The authors would like to thank Kazunaga Tanaka (Waseda Univ, Japan)
for interesting discussions leading to Remark 4,(2) and Dirk Hundertmark 
(KIT, Germany) for suggesting Agmon's method in the proof of Proposition \ref{Prop
exponentialdecay}.



\end{document}